\newtheorem{ccounter}{ccounter}[section]
\newtheorem{theorem}[ccounter]{Theorem}
\newtheorem{lemma}[ccounter]{Lemma}
\newtheorem{cor}[ccounter]{Corollary}
\newtheorem{definition}[ccounter]{Definition}
\newtheorem{prop}[ccounter]{Proposition}
\newtheorem{ass}[ccounter]{Assumption}
\newtheorem{ex}[ccounter]{Example}
\def\bet{\begin{theorem}}
\def\eet{\end{theorem}}
\def\bel{\begin{lemma}}
\def\eel{\end{lemma}}
\def\bas{\begin{ass}}
\def\eas{\end{ass}}
\def\bec{\begin{cor}}
\def\eec{\end{cor}}
\def\bed{\begin{definition}}
\def\eed{\end{definition}}
\def\bep{\begin{prop}}
\def\eep{\end{prop}}
\def\beq{\begin{equation}}
\def\eeq{\end{equation}}
\def\proof{\noindent {\bf Proof.}\ \ }
\def\bea{\begin{equation*}}
\def\eea{\end{equation*}}
\def\bex{\begin{ex}}
\def\eex{\end{ex}}
\def\remark{\noindent{\bf Remark. }}
\def\rr{\mathbb{R}}
\def\1{\boldsymbol{1}}
\def\e{\mathrm{e}}
\def\del{\partial}
\def\d{\mathrm{d}}
\def\eps{\varepsilon}
\renewcommand\leq\varleq
\renewcommand\geq\vargeq
\def\ee{\mathrm{E}}
\def\F{\mathcal{F}}
\def\O{\mathcal{O}}
\def\ee{\mathbb{E}}
\def\pp{\mathbb{P}}
\def\tilE{\tilde{E}}
\def\Var{\mathrm{Var}}
\def\Cov{\mathrm{Cov}}
\newcommand{\psiV}{\psi^{V}}
\def\Deld{\Delta^{(d)}}
\def\Deli{\Delta^{(i)}}
\def\Deldd{\Delta^{(dd)}}
\def\Delii{\Delta^{(ii)}}
\def\Deldi{\Delta^{(di)}}
\def\Delid{\Delta^{(id)}}
\def\udd{u^{(\theta \theta )}}
\def\udi{u^{ ( \theta \eta ) }}
\def\uii{u^{(\eta \eta ) } }
\def\Eet{E^{(\eta , \theta)}}
\def\hf{h^{(f)}}
\def\tilE{\tilde{E}}
\def\hatE{\hat{E}}
\def\Ett{E^{(\theta, \theta )}}
\def\tilW{\tilde{W}}
\def\tilu{\tilde{u}}
\def\tilh{\tilde{h}}
\def\tilk{\tilde{k}}
\begin{document}
\title{Deformed GOE}

\begin{table}
\centering

\begin{tabular}{c}

\multicolumn{1}{c}{\parbox{12cm}{\begin{center}\Large{\bf KPZ-type fluctuation exponents for interacting diffusions in equilibrium }\end{center}}}\\
\\
\end{tabular}
\begin{tabular}{ c c c  }

 Benjamin Landon & Christian Noack &  Philippe Sosoe
 \\
 & & \\  
  \footnotesize{University of Toronto} & \footnotesize{Purdue University} & \footnotesize{Cornell University}  \\
  \footnotesize{Department of Mathematics}  & \footnotesize{Department of Mathematics} & \footnotesize{Department of Mathematics}  \\
 \footnotesize{\texttt{blandon@math.toronto.edu}} & \footnotesize{\texttt{cnoack@purdue.edu}} &\footnotesize{\texttt{psosoe@math.cornell.edu}} \\
  & & \\
\end{tabular}
\\
\begin{tabular}{c}
\multicolumn{1}{c}{\today}\\
\\
\end{tabular}

\begin{tabular}{p{15 cm}}
\small{{\bf Abstract:} We consider systems of $N$ diffusions in equilibrium interacting through a potential $V$. We study a ``height function'' which for the special choice $V(x) = \e^{-x}$, coincides with the partition function of a stationary semidiscrete polymer, also known as the (stationary) O'Connell-Yor polymer.  For a general class of smooth convex potentials (generalizing the O'Connell-Yor case), we obtain the order of fluctuations of the height function by proving matching upper and lower bounds for the variance of order $N^{2/3}$, the expected scaling for models lying in the KPZ universality class.  The models we study are not expected to be integrable and our methods are analytic and non-perturbative, making no use of explicit formulas or any results for the O'Connell-Yor polymer.}
\end{tabular}
\end{table}

\section{Introduction}

The\let\thefootnote\relax\footnote{The work of B.L. is partially supported by an NSERC Discovery grant. The work of P.S. is partially supported by NSF grants DMS-1811093 and DMS-2154090.} O'Connell-Yor polymer \cite{OY}, also known as the semi-discrete polymer, is a central model of the Kardar-Parisi-Zhang (KPZ) universality class. The model is an ensemble of up-right paths in a random environment formed by independent standard Brownian motions $B_1,\ldots, B_N$, with partition function
\begin{equation}\label{eqn: polymer-partition}
  Z_{N,t}(\beta) :=\int_{0 \leq s_1\leq \ldots \leq s_{N-1}\leq t} \e^{\beta \sum_{j=1}^{N} B_j(s_j)-B_j(s_{j-1})} \,\mathrm{d}s_1\dots \mathrm{d}s_{N-1},
\end{equation}
where we use the conventions $s_0=0$ and $s_N=t$. The free energy is defined by $\log (Z_{N, t} (\beta))$.  This model, along with a stationary version also introduced in \cite{OY} (see Section \ref{sec: stationary} below for the definition), has been an object of intense study over the past decade. Using the stationary version, O'Connell and Moriarty \cite{OM} computed the limiting free energy density
\beq
\lim_{N \to \infty} \frac{1}{N}\log Z_{N,t}(\beta).
\eeq
Sepp\"al\"ainen and Valk\'o \cite{SV} showed that the fluctuations of the free energy are of order $N^{\frac{1}{3}}$ for both the stationary and non-stationary models when $N$ and $t$ are tuned in a certain \emph{characteristic direction} (otherwise the fluctuations are Gaussian and of larger order
).  O'Connell \cite{O-toda} introduced a multidimensional diffusion process related to Dyson Brownian motion such that the law of $\log Z_{N,t}$ is equal to that of the ``leading particle" of the process, and used this to give a contour integral expression for its distribution.
An alternate contour representation was used by Borodin, Corwin and Ferrari \cite{BCF} to show that the free energy 
asymptotically has  Tracy-Widom fluctuations, confirming the expectation that the model belongs to the KPZ universality class. Vir\'ag \cite{V} shows convergence of a suitably centered and rescaled version of $\log Z_{N,t}(1)$, as well as the KPZ equation, to the KPZ fixed point of Matetski, Remenik and Quastel \cite{MQR}.  In this context we also note the concurrent and independent work of Sarkar and Quastel \cite{SQ} obtaining convergence to the KPZ fixed point for a broad class of exclusion processes, as well as the KPZ equation itself. An alternative (but equivalent \cite{NQR2}), more geometric, description of the scaling limit is the directed landscape, obtained as the continuum limit of Brownian Last Passage percolation, corresponding to $\beta=\infty$,  introduced by Dauvergne, Ortmann and Vir\'ag \cite{DOV}.

It has been noted by several authors (see for example \cite{FSW-book, O-toda, SS, spohn}) that the sequence 
\[v_j(t)=\log Z_{j,t}\] for 
$j=1,\ldots, N$ satisfies a system of stochastic differential equations of the form:
\begin{equation}\label{eqn: non-stat-equations}
\begin{split}
\mathrm{d}v_j&= -V'(v_j-v_{j-1})\mathrm{d}t+\mathrm{d}B_j,\\
V(x) &:= \e^{-\beta x}.
\end{split}
\end{equation}
In this setting, the implication of the Burke property discovered by O'Connell and Yor for their polymer model \cite{OY}, is that the solution   $\{ v_j(t)\}_{1\leq j \leq N}$ has an invariant measure of product form. The ``zero temperature" case, corresponding to the limit $\beta\rightarrow \infty$ of the system \eqref{eqn: non-stat-equations}, has been studied by Sasamoto-Spohn as well as Ferrari, Spohn and Weiss \cite{FSW1, FSW2, FSW-book, SS}. In this formal limit, the system consists of Brownian motions reflected off each other. Ferrari, Spohn and Weiss's results imply that for various classes of initial data, the distribution of the system has explicit expressions in terms of contour integrals that can be analyzed to find limiting distributions given by the Airy process. More recently, Nica, Remenik and Quastel \cite{NQR} showed that the scaling limit of the time-dependent system at zero temperature is described by the KPZ fixed point.

Systems such as \eqref{eqn: non-stat-equations}, as well as the equilibrium version we study below are the totally asymmetric analog of the following classical system of interacting Brownian motions studied in, e.g.,   \cite{spohn-CMP} (see \cite{harris} as well for the case $\beta=\infty$):
\begin{equation}\label{eqn: symmeq}
\mathrm{d}v_j= (V'(v_{j+1}-v_j)-V'(v_j-v_{j-1}))\mathrm{d}t+\mathrm{d}B_j.
\end{equation}
For general convex $V$, this is sometimes known as the Ginzburg-Landau system \cite{zhu}. Chang and Yau  \cite{changyau} famously derived the fluctuations for such processes out of equilibrium. In  this symmetric case, the  fluctuations are of order $N^{\frac{1}{4}}$. 

Diehl, Gubinelli and Perkowski \cite{DGP} study the weakly asymmetric case, where \eqref{eqn: symmeq} is replaced by
\[\mathrm{d}v_j= (pV'(v_{j+1}-v_j)-qV'(v_j-v_{j-1}))\mathrm{d}t+\mathrm{d}B_j\]
with $p-q=1/N$ and show that after suitably rescaling, the field 
\[u_j:=v_j-v_{j-1},\]  converges to a solution of the stochastic Burgers equation. Their result holds for a class of convex potentials with $V'$ Lipschitz. The recent work \cite{jaramoreno} specifically addresses the O'Connell-Yor case and proves convergence to the stochastic Burgers equation in the intermediate disorder regime.

The purpose of this paper is to probe the universality of the  fluctuations of an equilibrium version of the system \eqref{eqn: non-stat-equations}. As described above,  the system is exactly solvable and known to lie in the KPZ universality class in the special case $V(x) = \e^{-x}$.
 However, the KPZ universality is  conjectured to hold for generic classes of potentials, beyond the exactly solvable cases. For example, Ferrari, Spohn and Weiss \cite[Chapter 1, p. 3]{FSW2} write that ``the exponential [...] can be
replaced by `any' function  [of $v_j-v_{j-1}$] except for the linear one, and the system is
still in the KPZ universality class.'' Indeed, in the special case of quadratic potentials (i.e., $V'(x)$ linear) the system can be solved exactly and we will show that the fluctuations are Gaussian of order $N^{1/4}$ as in the symmetric case \eqref{eqn: symmeq}. 

In the main results of this paper, we will introduce a general  class of convex potentials $V$ which includes the O'Connell-Yor case $V(x)=\e^{-x}$, but  are not expected to be explicitly solvable.  The class of potentials we consider includes the Laplace transform of any finite measure compactly supported in $(0, \infty)$.   To each system of interacting diffusions we associate a ``height function'' that coincides with the polymer partition function in the O'Connell-Yor case.  We prove the variance of this height function is of order $\O (N^{2/3})$.  In particular, we recover the upper bounds for the known exponents for the O'Connell-Yor model \cite{SV,MFSV} entirely through a dynamical approach without appealing to the polymer representation \eqref{eqn: polymer-partition} or any exact formulas whatsoever, as both are unavailable for the class of models we consider.  Secondly, under a curvature condition (satisfied in the O'Connell-Yor case, and a condition that we in general expect to be generic) we complement our upper bounds with a lower bound for the variance of the same order of magnitude.  

Our upper and lower bounds provide evidence for the conjecture that this model lies in the KPZ universality class by exhibiting the correct order of fluctuations.  After  introducing our model and results we will state perspectives on why obtaining the full universality for this model may indeed be tractable.

The strategy of proof is inspired by coupling arguments appearing in works of Bal\'azs-Cator-Sepp\"al\"ainen for the corner growth model with exponential weights \cite{BCS}, Bal\'azs-Sepp\-{\"a}l{\"a}inen for the ASEP \cite{BS}, Bal\'azs-Komj\'athy-Sepp\"al\"ainen on zero-range and deposition processes \cite{BKS1, BKS2}, Bal\'azs-Sepp\"al\"ainen-Quastel on the KPZ equation \cite{BSQ} and Sepp\"al\"ainen on various models of last passage percolation and polymers in random environments \cite{S1, S2}. All of these models are expected to belong to the KPZ universality class, and in some cases this expectation has been confirmed by rigorous results such as the existence of asymptotic random matrix (e.g. Tracy-Widom) fluctuations.  

The difference between the model we consider here and those mentioned thus far -- other than the discrete nature of the state space in most of these works -- is that, without the polymer or particle system interpretation, we do not have access to quantities which play the role of the occupation length or second class particles when we work with perturbations of the initial data.  We therefore must rely entirely on properties of the system that may be deduced from the dynamical interpretation \eqref{eqn: non-stat-equations} and the evolution equations we will later derive for the perturbations.

The models we consider depend on a parameter $\theta >0$, essentially controlling a drift in one of the Brownian driving terms in \eqref{eqn: non-stat-equations}. In \cite{CN1}, the last two authors  gave an alternative proof of the result of Sepp\"al\"ainen and Valk\'o that the variance of the O'Connell-Yor polymer is of $\O (N^{2/3})$. A substantial component of this argument is that, due to the partition function representation, the polymer is convex and monotone in $\theta$. The strategy here follows this proof in broad strokes; however, the required monotonicity and convexity are now highly non-trivial (lacking a polymer representation) and the argument now appears quite abstract. For reader convenience we recall the proof of \cite{CN1} in Section \ref{sec:upper-sketch}. 

Our proof of the lower bound relies on the introduction of a random functional which plays a similar role to the polymer Gibbs measure; for lack of a better term, we introduce the ``pseudo-Gibbs measure'' in Section \ref{sec:psg}. In particular, certain Malliavin derivatives of the height function  can be represented as expectations with respect to the pseudo-Gibbs measure, similar to the interpretation of a polymer partition function as a cumulant generating function. However, the connection seems to stop at the first derivative (second derivatives are not covariances) and so establishing various properties of the pseudo-Gibbs measure (e.g., monotonicity of expectations) takes place in a relatively abstract manner.

\section{Definition of model and statement of results}

Consider the following system of interacting diffusions
 \begin{equation}
 \label{eqn: system}
 \begin{cases}
    \mathrm{d}u_1 &= -V'(u_1)\mathrm{d}t+\mathrm{d}B_0 - \theta \d t +\mathrm{d}B_1\\
    \mathrm{d}u_j &= (V'(u_{j-1})-V'(u_j))\mathrm{d}t + \mathrm{d}B_j-\mathrm{d}B_{j-1}, \quad 2 \leq j\leq N
 \end{cases}
\end{equation}
Here $B_0,\ldots, B_N$ are independent standard Brownian motions on $\mathbb{R}_+$ and $ \theta >0$. 
 We are interested in the case where the initial data \[u(0)=\big(u_1(0),\ldots,u_N(0)\big)\]
is distributed according to the unique invariant measure for \eqref{eqn: system}, which is a product measure of the form,
\beq \label{eqn:inv-meas-def}
\omega_\theta (x_1, \dots, x_N) := \frac{1}{ Z(\theta)^N} \prod_{j=1}^N \e^{ - \theta x_j - V(x_j ) } =: \prod_{j=1}^N \nu_\theta (x_j) .
\eeq
Here, $Z(\theta)$ is the normalization constant,
\beq \label{eqn: Z-def}
Z( \theta) := \int_{\rr} \e^{ - \theta x - V(x)} \d x
\eeq
and $\nu_\theta$ is the probability measure on $\rr$ defined implicitly above. The invariance can be easily seen at the level of formal calcuation by applying the adjoint of the generator of this diffusion to the above measure, and will be rigorously justified below (see Proposition \ref{prop:inv-measure}). 
The class of potentials $V$ we consider is as follows.


\bed \label{def: V-def} 
We say $V$ is of O'Connell-Yor-type if $V \geq 0$ is a smooth convex function satisfying,
\beq \label{eqn: assumption}
V(x) \geq c |x|^2 \1_{ \{ x \leq - C \} } , \qquad V'(x) \leq 0
\eeq
and 
\beq \label{eqn:deriv-assump}
c_0 V''(x) \leq - V''' (x) \leq \frac{1}{c_0} V'' (x) + C \1_{ \{ x \geq -C \} }
\eeq
\eed
\remark As a consequence of the assumptions \eqref{eqn: assumption} we have for any $\theta >0$ that,
\begin{equation} \label{eqn:Vthetalb}
V(x)+\theta x\geq c'|x| - C'
\end{equation}
for some positive $c', C'>0$.   \qed

\remark It is easy to see that if $\mu$ is a finite positive measure whose support is compactly contained in $(0, \infty)$, then
\beq
V_\mu (x) := \int \e^{ - x s} \d \mu (s)
\eeq
is of O'Connell-Yor type, as is  $V(x) := V_\mu (x) + \eps \varphi (x)$ for $\varphi \in C_c^\infty ( \rr)$ and $\eps$ sufficiently small. \qed

Our assumptions in fact imply an exponential growth condition on $V(x)$ and so existence of solutions to \eqref{eqn: system} does not lie within the standard theory. Nonetheless, this system is well-behaved and for completeness we give a proof of the following in Appendix \ref{a:diff-1}. One could prove the following under considerably less stringent conditions on $V$, but this would take us too far astride of the main goal of our work.

\bep \label{prop:inv-measure}
Let $V$ be of O'Connell-Yor type and $\theta >0$. The system \eqref{eqn: system} admits a unique, global-in-time strong solution that is a Markov process with unique invariant measure given by $\omega_\theta$ defined in \eqref{eqn:inv-meas-def} above. 
\eep

\subsection{Link with the O'Connell-Yor polymer}\label{sec: stationary}
Let us now explain the connection between the model introduced in \cite{OY} and the system \eqref{eqn: system} in equilibrium. The \emph{stationary semi-discrete polymer} is a polymer model in a random environment, defined by a variant of the partition function \eqref{eqn: polymer-partition}. Consider a collection $B_0,B_1,\ldots,B_N$ of $N+1$ \emph{two-sided} Brownian motions with $B_0(0) = 0$ and a parameter $\theta>0$. We then define
\begin{equation}\label{eqn: Z}
Z^\theta_{N,t} :=\int_{-\infty< s_0\leq  s_1\leq \ldots \leq s_{N-1}\leq t} \e^{\theta s_0 -B_0(s_0)+\sum_{j=1}^N B_j(s_j)-B_j(s_{j-1})} \,\mathrm{d}s_0\cdots \mathrm{d}s_{N-1},
\end{equation}
where $s_N=t$ as in \eqref{eqn: polymer-partition}, but $s_0$ is now a variable of integration. Imamura-Sasamoto \cite{sasamoto} derived a contour integral representation for the above model and found the Baik-Rains distribution for the limiting free energy distribution. 
A simple computation using It\^o's formula shows that the quantities \begin{align*}
    u_1^{\mathrm{OY}}(t)&:= \log Z^\theta_{1,t} +B_0(t)-\theta t,\\
    u_j^{\mathrm{OY}}(t)&:=\log Z^\theta_{j,t}-\log Z^\theta_{j-1,t}, \quad 2\leq j\leq N
\end{align*}
satisfy the stochastic differential equations \eqref{eqn: system} with 
\[V(x)=\e^{-x}.\] 
In particular, we note the following relation involving the free energy:
\begin{equation}\label{eqn: polymer-height}
\log Z_{N,t}^\theta= \sum_{j=1}^N u^{\mathrm{OY}}_j(t) -B_0(t)+\theta t.
\end{equation}
The main result of \cite{OY}, the \emph{Burke property}, implies that $u_j^{\mathrm{OY}}$ has a product form invariant measure. That is, if $u^{\mathrm{OY}}(0)=(u^{\mathrm{OY}}_1(0),\ldots,u^{\mathrm{OY}}_N(0))$ is an iid vector with the distribution $(\log \frac{1}{X_j})_{1\leq j\leq N}$, where $X_j$ is a $Gamma(\theta)$\footnote{These random variables have density $1_{\{x>0\}}x^{\theta-1} \e^{-x}\frac{\mathrm{d}x}{\Gamma(\theta)}$.} random variable, then $u^{\mathrm{OY}}(t)$ has the same distribution at later times. In particular,
\[\log Z_{N,t=0}^\theta=\sum_{j=1}^N u_j^{\mathrm{OY}}(0),\]
has the distribution of an iid sum.

\subsection{Observable and statement of results}
The main object of study in this paper is the analogue of the free energy in \eqref{eqn: polymer-height}.  Namely, let $V$ be an O'Connell-Yor type potential, and let $u_j (t)$ denote the solution to \eqref{eqn: system} with initial data distributed according to the product invariant measure $\omega_\theta$. Define,
\begin{equation}\label{eqn: height-def}
    W_{N,t}^\theta:= \sum_{j=1}^N u_j(t) -B_0(t)+\theta t.
\end{equation}
Recall $Z(\theta)$ as defined in \eqref{eqn: Z-def} and for $k \geq -1$, set
\begin{equation}\label{eqn: psi-def}
    \psi_k^V(\theta):=\frac{\mathrm{d}^{k+1}}{\mathrm{d}\theta^{k+1}}\log Z(\theta).
\end{equation}
Note that $\psiV_1(\theta)>0$, being the variance of a random variable distributed according to $\nu_\theta$. 

From \eqref{eqn: height-def} and the invariance of $\omega_\theta$, we have 
\begin{equation}\label{eqn: rough}
\mathrm{Var}(W_{N,t}^\theta)\leq 2\mathrm{Var}\big(\sum_{j=1}^N u_j(0)\big)+2t= \O(N+t).
\end{equation}
if the initial data is distributed according to $\omega_\theta$. In general, the order of this bound cannot be improved: by Corollary \ref{cor:bad-lower} below, we have
\beq
\mathrm{Var}(W_{N,t}^\theta)\geq |N\psiV_1(\theta)-t| ,
\eeq
so \eqref{eqn: rough} is of the correct order if either one of the two parameters $N$ or $t$ is much larger than the other. However, for special values of $N$ and $t$ depending on $\theta$, there is cancellation between the iid sum and the  Brownian motion term in \eqref{eqn: height-def}. For example, we will see below (see Proposition \ref{prop: normal}) that in the special case $V(x)=\frac{x^2}{2}$, and $t=N$, the variance can be computed exactly and the fluctuations are of order $N^{\frac{1}{4}}$. 

The main result of Sepp\"al\"ainen and Valk\'o \cite{SV} for the O'Connell-Yor polymer implies that if $t$ and $N$ are suitably chosen (see \eqref{eqn:  characteristic-direction}), the fluctuations are of order $N^{\frac{1}{3}}$, a growth rate characteristic of the Kardar-Parisi-Zhang universality class. Our first main result is a non-perturbative argument which extends the variance upper bound in \cite{SV} to a large class of potentials. Sepp\"al\"ainen and Valk\'o's proof relies on the polymer interpretation explained in Section \ref{sec: stationary}, which is not available for potentials other than $\e^{-\beta x}$, $\beta>0$.

Our upper bound is the following and is proven in Section \ref{sec:upper}. 
\begin{theorem}\label{thm: upper-bound}
   Let $V$ be a O'Connell-Yor type potential. 
    Fix $\theta >0$ and suppose  
     that $N$ and $t$ are chosen so that
    \begin{equation}\label{eqn:  characteristic-direction}
|t-N\psi_1^V(\theta)|\leq A N^{\frac{2}{3}},
\end{equation}
for some $A>0$.  
    Then, there exists $C>0$ such that
    \begin{equation}\label{eqn: variance-upper-bound}
        \mathrm{Var}^\theta(W^\theta_{N,t})\leq CN^{\frac{2}{3}}.
    \end{equation}
\end{theorem}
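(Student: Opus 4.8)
The plan is to reduce the variance bound to monotonicity and convexity of the height function in the drift parameter $\theta$, following the coupling strategy of Bal\'azs--Cator--Sepp\"al\"ainen and the dynamical reproof in \cite{CN1}.

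\smallskip
\noindent\emph{Step 1: a variance identity.} Since $\omega_\theta$ is a product measure with per-coordinate variance $\psi_1^V(\theta)$, stationarity gives $\mathrm{Var}(\sum_j u_j(t)) = N\psi_1^V(\theta)$ and $\mathbb{E}[\sum_j u_j(t)] = -N\psi_0^V(\theta)$ for all $t$, so together with $\mathrm{Var}(B_0(t)) = t$,
\[
\mathrm{Var}^\theta(W_{N,t}^\theta) = N\psi_1^V(\theta) + t - 2\,\mathrm{Cov}\Big(\sum_{j=1}^N u_j(t),\, B_0(t)\Big).
\]
The crucial point is that adding a constant drift to $B_0$ in \eqref{eqn: system} is the same as decreasing $\theta$: Gaussian integration by parts writes the covariance as $\mathbb{E}[\int_0^t D^{B_0}_s \sum_j u_j(t)\,\mathrm{d}s]$, and the Cameron--Martin shift $B_0(\cdot)\rightsquigarrow B_0(\cdot)+\epsilon(\cdot\wedge t)$ effecting this integral acts on the dynamics up to time $t$ exactly as $\theta\rightsquigarrow\theta-\epsilon$, leaving $u(0)$ untouched. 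Combined with the telescoping identity $W_{N,t}^\theta = \sum_j u_j(0) + B_N(t) - \int_0^t V'(u_N(r))\,\mathrm{d}r$ (obtained by summing \eqref{eqn: system} over $j$), this yields
\[
\mathrm{Var}^\theta(W_{N,t}^\theta) = N\psi_1^V(\theta) - t + 2\,\mathcal T_\theta, \qquad \mathcal T_\theta := \mathbb{E}^{\omega_\theta}\!\Big[-\int_0^t V''(u_N(r))\,\partial_\vartheta u_N(r)\,\mathrm{d}r\Big] \ge 0,
\]
where $\partial_\vartheta$ is the derivative in the drift parameter at $\vartheta=\theta$ with the Brownian motions and $u(0)$ frozen; equivalently $\mathcal T_\theta = \tfrac{\mathrm d}{\mathrm d\vartheta}\big|_{\vartheta=\theta}\mathbb{E}^{\omega_\theta}[W_{N,t}^{\vartheta}]$, the $\vartheta$-derivative of the mean height with the initial law held fixed at $\omega_\theta$. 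As $|t-N\psi_1^V(\theta)|=O(N^{2/3})$, it suffices to prove $\mathcal T_\theta \le CN^{2/3}$.

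\smallskip
\noindent\emph{Step 2: convexity and a one-parameter comparison.} The map $\vartheta\mapsto W_{N,t}^{\vartheta,x}(B)$ (fixed initial data $x$ and Brownian paths) is monotone non-decreasing and convex: monotonicity because raising $\vartheta$ lowers every $u_j$ while $V'$ is increasing, and convexity by a second-order comparison for the variational equations of \eqref{eqn: system} in which the bounds \eqref{eqn:deriv-assump} relating $V'''$ to $V''$ are used. Thus $\mathcal G(\vartheta):=\mathbb{E}^{\omega_\theta}[W_{N,t}^{\vartheta,U}]$ is convex with $\mathcal G(\theta)=\mathbb{E}[W_{N,t}^\theta]=-N\psi_0^V(\theta)+\theta t$, so $\mathcal T_\theta=\mathcal G'(\theta)\le\delta^{-1}(\mathcal G(\theta+\delta)-\mathcal G(\theta))$ for any $\delta>0$. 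Coupling $U\sim\omega_\theta$ monotonically above $U'\sim\omega_{\theta+\delta}$ and using the telescoping formula, monotonicity of the dynamics in the initial data, stationarity of both coupled systems, and the integration-by-parts identity $\mathbb{E}_{\nu_\vartheta}[V'(X)]=-\vartheta$, one obtains
\[
\mathcal G(\theta+\delta)-\mathcal G(\theta) = \big(t-N\psi_1^V(\theta)\big)\delta + O(N\delta^2) + \mathcal E_\delta, \qquad \mathcal E_\delta := \mathbb{E}^{\omega_\theta}[W_{N,t}^{\theta+\delta,U}] - \mathbb{E}^{\omega_{\theta+\delta}}[W_{N,t}^{\theta+\delta,U}]\ge 0,
\]
where $\mathcal E_\delta=\mathbb{E}[\sum_j(u^{\theta+\delta,U}_j(t)-u^{\theta+\delta,U'}_j(t))]$ is the expected residual mass of the perturbation — the gap between running the $(\theta+\delta)$-dynamics from $\omega_\theta$-data versus from its own equilibrium. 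Choosing $\delta = N^{-1/3}$ makes every term $O(N^{2/3})$ except $\mathcal E_\delta/\delta$, so the proof reduces to the \emph{residual-mass estimate} $\mathcal E_\delta = O(\delta N^{2/3})$: on the characteristic time scale $t\approx N\psi_1^V(\theta)$ essentially all of the perturbation has been carried out through the right boundary, only $O(N^{2/3})$ sites' worth remaining. I would prove this by telescoping $\mathcal E_\delta$ over the coordinates of $U$ and showing — via monotonicity and comparison with the explicit mean $\mathbb{E}[W_{N,t}^{\theta+\delta}]$ — that altering the $\ell$-th initial coordinate moves $W_{N,t}^{\theta+\delta}$ negligibly unless $\ell$ lies within $O(N^{2/3})$ of the backward characteristic.

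\smallskip
The skeleton above is the \cite{CN1} template; the genuinely hard steps, since no polymer/partition-function representation is available, are the structural inputs it rests on: the \emph{convexity} (and monotonicity) of $\vartheta\mapsto W_{N,t}^{\vartheta,x}$, which must be read off the coupled variational equations for \eqref{eqn: system} and is exactly where the quantitative hypotheses on $V$ in Definition \ref{def: V-def} do their work, together with the quantitative relaxation underlying $\mathcal E_\delta=O(\delta N^{2/3})$.
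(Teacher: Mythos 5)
Your Step 1 is correct and recovers the paper's variance representation (Lemma \ref{lem:var-rep}): the Cameron--Martin/Gaussian integration-by-parts argument identifying $\mathrm{Cov}(\sum_j u_j(t), B_0(t))$ with $-\mathbb{E}[(\del_\theta W_{N,t})(\theta,\theta)]$ is exactly Lemma \ref{lem:ibp}. Your algebraic decomposition of $\mathcal{G}(\theta+\delta)-\mathcal{G}(\theta)$ is also correct, and the reduction to the residual-mass estimate $\mathcal{E}_\delta = O(\delta N^{2/3})$ is accurate as a statement of what would need to be proven.

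The gap is that the reduction in Step 2 is essentially circular, so the proposed inequality cannot close. Working in the coupled notation, $\mathcal{E}_\delta = -\int_\theta^{\theta+\delta}\ee[(\del_\eta W_{N,t})(\eta,\theta+\delta)]\,\mathrm{d}\eta$, and the second line of \eqref{eqn:var-rep} gives $-\ee[(\del_\eta W_{N,t})(\theta,\theta)] = \tfrac12\bigl(\Var(W_{N,t}^\theta)-(t-N\psiV_1(\theta))\bigr)$. Thus $\mathcal{E}_\delta/\delta$ is, to leading order, $\tfrac12\Var(W_{N,t}^\theta)+O(N^{2/3})$: the estimate $\mathcal{E}_\delta=O(\delta N^{2/3})$ \emph{is} the variance bound you are trying to prove. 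Feeding the best available bound on $\mathcal{E}_\delta$ back into your chain yields $\mathcal{T}_\theta \le CN^{2/3}+\tfrac12\Var(W)$ and hence $\Var(W)\le CN^{2/3}+\Var(W)$, which is vacuous. Any first-moment, first-order-in-$\delta$ argument of this shape will have the same defect: the coefficient of $\Var$ on the right is $1$, independent of the choice of $\delta$. Moreover, your proposed escape --- telescoping $\mathcal{E}_\delta$ over the coordinates of the initial data and localizing each coordinate's influence near the backward characteristic --- is precisely the occupation-length/second-class-particle device that the paper explicitly notes is unavailable without a polymer representation.

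The paper sidesteps this by squaring first. One applies Cauchy--Schwarz to get $\ee[\del_\theta W]^2\le\ee[(\del_\theta W)^2]$, writes $(\del_\theta W)^2\le\bigl(\tfrac{\d}{\d\theta}W(\theta,\theta)\bigr)^2 - 2\,\del_\eta W\,\del_\theta W$, and controls the cross term via Proposition \ref{prop:mixed}, which shows $0\le -\del_\eta W\,\del_\theta W \le \del_\eta\del_\theta W$; combining with $2\del_\eta\del_\theta W = \tfrac{\d^2}{\d\theta^2}W - \del_\eta^2 W-\del_\theta^2 W$ and the second-derivative sign/moment bounds (Lemmas \ref{lem:2nd-der-basic}, \ref{lem:2nd-ii}) renders the cross term $O(N)$. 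The total derivative squared is then bounded by convexity with $\lambda_\pm=\theta\pm N^{-1/3}$ (your Step 2 convexity input is the right one here), and after the variance-comparison Lemma \ref{lem:var-comp} one lands on $\mathcal V\le C\bigl(N^{2/3}+N^{1/3}\mathcal V^{1/2}\bigr)$, which closes because the right-hand side grows like $\mathcal V^{1/2}$ rather than $\mathcal V$. Proposition \ref{prop:mixed} --- proved by propagating a coupled differential inequality for $A_n(t)=\del_\eta\del_\theta W_{n,t}+c_0\,\del_\theta W_{n,t}\,\del_\eta W_{n,t}$ along the triangular system, where the structural bound \eqref{eqn:deriv-assump} on $V'''/V''$ enters decisively --- is the piece of the argument your outline lacks and cannot reach by the first-moment route.
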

\remark In general, if one fixes $\theta$ or allows it to vary over a compact interval supported in $(0, \infty)$ one obtains the estimate,
\beq
\Var ( W_{N, t}^\theta ) \leq CN^{2/3} + | t- N \psiV_1 ( \theta ) |,
\eeq
for some $C>0$ and all $t >0$. \qed


We remark here that the assumptions \eqref{eqn: assumption} are primarily used to show that the system \eqref{eqn: system} is well-posed and that the solutions are differentiable with respect to various parameters.  We expect that our argument can likely be extended to any convex $V$ satisfying \eqref{eqn:deriv-assump} for which well-posedness and certain differentiability properties hold.  


If, instead of the characteristic direction condition \eqref{eqn:  characteristic-direction}, we assume that $|t-N\psiV_1(\theta)| $ is much larger than $N^{2/3}$, then the fluctuations are of larger order and are in fact Gaussian. The following is proven in Section \ref{sec:main-cor}.
\begin{cor}\label{main-cor}
Fix $\theta >0$ and suppose that $t = t_N$ is such that
\beq
\lim_{N \to \infty} \frac{ | t - \psiV_1 ( \theta ) | }{N^{2/3} } = \infty.
\eeq
Then,
\beq
\frac{ W_{N, t}^\theta - \ee[ W_{N, t}^\theta ]}{| t- \psiV_1 ( \theta ) |^{1/2}}
\eeq
converges to a standard Gaussian random variable as $N \to \infty$. 
\end{cor}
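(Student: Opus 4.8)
The plan is to pin down the variance from the two bounds already available, reduce to asymptotic normality, and then isolate the source of the fluctuations. Throughout we read the hypothesis as $|t-N\psiV_1(\theta)|/N^{2/3}\to\infty$, the scale $N\psiV_1(\theta)$ being the one entering \eqref{eqn:  characteristic-direction} and Corollary \ref{cor:bad-lower}. The remark after Theorem \ref{thm: upper-bound} gives $\Var(W_{N,t}^\theta)\leq CN^{2/3}+|t-N\psiV_1(\theta)|$ and Corollary \ref{cor:bad-lower} gives $\Var(W_{N,t}^\theta)\geq|t-N\psiV_1(\theta)|$, so under the hypothesis $\Var(W_{N,t}^\theta)=|t-N\psiV_1(\theta)|(1+o(1))$; it therefore suffices to prove that $\tilW_N:=\big(W_{N,t}^\theta-\ee[W_{N,t}^\theta]\big)/|t-N\psiV_1(\theta)|^{1/2}$ converges in law to a standard Gaussian. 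Summing \eqref{eqn: system} over $j$ and telescoping the drift and noise terms gives $\d\big(\sum_j u_j\big)=(-V'(u_N)-\theta)\,\d t+\d B_0+\d B_N$; using $\ee_{\nu_\theta}[-V'(X)]=\theta$ (integration by parts against $\nu_\theta$) and the invariance of $\omega_\theta$, this yields the two representations
\[
W_{N,t}^\theta-\ee[W_{N,t}^\theta]\;=\;\Sigma_N+J_{N,t}+B_N(t)\;=\;\Sigma_N^{(t)}-B_0(t),
\]
where $\Sigma_N:=\sum_{j=1}^N(u_j(0)-\ee[u_1(0)])$ and $\Sigma_N^{(t)}:=\sum_{j=1}^N(u_j(t)-\ee[u_1(0)])$ are i.i.d.\ sums from $\nu_\theta$, each of variance $N\psiV_1(\theta)$ and asymptotically $\mathcal N(0,N\psiV_1(\theta))$ by the classical central limit theorem (the finite variance of $\nu_\theta$ coming from the exponential tail \eqref{eqn:Vthetalb}), and $J_{N,t}:=\int_0^t(-V'(u_N(s))-\theta)\,\d s$; note $\Sigma_N,\Sigma_N^{(t)}$ are independent of $B_N$, and the only correlation one cannot discard cheaply is that between $\Sigma_N^{(t)}$ and $B_0$.

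If $t/N\to0$ then $\Var(W_{N,t}^\theta)=N\psiV_1(\theta)(1+o(1))$ and $B_0(t)/\sqrt N\to0$ in $L^2$, so by $W_{N,t}^\theta-\ee[W_{N,t}^\theta]=\Sigma_N^{(t)}-B_0(t)$ and Slutsky's theorem $\tilW_N$ has the same limit as $\Sigma_N^{(t)}/\sqrt{N\psiV_1(\theta)}$, a standard Gaussian. Symmetrically, if $t/N\to\infty$ then $\Var(W_{N,t}^\theta)=t(1+o(1))$, $\Sigma_N^{(t)}/\sqrt t\to0$ in $L^2$, and $\tilW_N$ has the same limit as $-B_0(t)/\sqrt t$, again standard Gaussian. (In both cases the hypothesis holds automatically.) This leaves the balanced regime $t\asymp N$, where — since $|t-N\psiV_1(\theta)|\gg N^{2/3}$ forces $t/(N\psiV_1(\theta))\to1$ — the i.i.d.\ sum $\Sigma_N^{(t)}$ and the Brownian value $B_0(t)$ both have standard deviation $\asymp\sqrt N$, far larger than the standard deviation $|t-N\psiV_1(\theta)|^{1/2}$ of the quantity we care about: there is near‑total cancellation.

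For the balanced regime the plan is to subtract the fluctuation evaluated exactly in the characteristic direction. Put $t^\ast:=N\psiV_1(\theta)$ and, say, $t\geq t^\ast$ (the other case being symmetric), and $\delta:=t-t^\ast$. Since $(N,t^\ast)$ satisfies \eqref{eqn:  characteristic-direction} with $A=0$, Theorem \ref{thm: upper-bound} gives $\Var(W_{N,t^\ast}^\theta)\leq CN^{2/3}=o(\delta)$, so $W_{N,t}^\theta-\ee[W_{N,t}^\theta]$ agrees, up to an $L^2$‑error $o(\sqrt\delta)$, with the centred increment $(W_{N,t}^\theta-W_{N,t^\ast}^\theta)-\ee[\,\cdot\,]$. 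By the decomposition above applied on $[t^\ast,t]$ and stationarity of the diffusion $u(\cdot)$, this increment has the law of $(W_{N,\delta}^\theta-W_{N,0}^\theta)-\ee[\,\cdot\,]=\Sigma_N^{(\delta)}-\Sigma_N-B_0(\delta)$; it remains to show this quantity is asymptotically $\mathcal N(0,\delta)$, which together with the variance asymptotics closes the argument.

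The main obstacle is exactly this last point: the height increment over a window of length $\delta$ with $N^{2/3}\ll\delta=o(N)$ is asymptotically Gaussian at scale $\sqrt\delta$. Because $\delta$ may be as small as $N^{2/3+\eps}$ while the mixing time of the $N$‑particle diffusion is expected to grow polynomially in $N$ — faster than $\delta$ over part of the range — this cannot come from a fixed‑$N$ ergodic central limit theorem for $u(\cdot)$. Instead one must feed back the sharp variance estimates (now for $W_{N,\delta}^\theta$, and for the covariances making up $\Var(\Sigma_N^{(\delta)}-\Sigma_N-B_0(\delta))$) together with the monotonicity and convexity of $W^\theta$ in $\theta$ developed in the body of the paper, in order to show that the increment decorrelates sufficiently from the initial data $u(0)$ and from the boundary Brownian motion $B_0$ on $[0,\delta]$ — equivalently, that the $\asymp\sqrt N$ Gaussian fluctuations carried by $\Sigma_N^{(\delta)}$ and by $\Sigma_N$ cancel down to a genuinely Gaussian residual of order $\sqrt\delta$. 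This cancellation is the analogue, in the absence of a particle‑system or polymer picture, of the localisation of a second‑class particle away from the characteristic direction, and extracting it from the dynamical and monotonicity structure is the delicate part.
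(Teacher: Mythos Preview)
Your variance asymptotics and your treatment of the extreme regimes $t/N\to 0$ and $t/N\to\infty$ are correct. The problem is the balanced regime: you reduce to showing that the centred increment $W_{N,t}^\theta-W_{N,t^\ast}^\theta$ (equivalently $\Sigma_N^{(\delta)}-\Sigma_N-B_0(\delta)$) is asymptotically $\mathcal N(0,\delta)$, and then you explicitly leave this open, describing it as ``the delicate part'' requiring control of near-total cancellation between two quantities each of standard deviation $\asymp\sqrt N$. That is a genuine gap, and as you correctly diagnose, filling it via mixing or ergodic CLTs is not available when $\delta$ can be as small as $N^{2/3+\eps}$. (Incidentally, the claim that $|t-N\psiV_1(\theta)|\gg N^{2/3}$ forces $t/(N\psiV_1(\theta))\to 1$ is false: take $t=2N\psiV_1(\theta)$.)

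The paper's proof avoids this difficulty entirely by a different, much simpler decomposition. Set $t_0=N\psiV_1(\theta)$. For $t\geq t_0$, split the Brownian term as $B_0(t)=\bigl(B_0(t)-B_0(t-t_0)\bigr)+B_0(t-t_0)$ and write
\[
W_{N,t}^\theta-\ee[W_{N,t}^\theta]
=\Bigl(\textstyle\sum_j u_j(t)-\bigl(B_0(t)-B_0(t-t_0)\bigr)+\theta t_0-\ee[W_{N,t_0}^\theta]\Bigr)\;-\;B_0(t-t_0).
\]
Because the process is stationary and Markov, $u(t)$ is the same measurable function of $\bigl(u(t-t_0),\{B_i(\cdot)-B_i(t-t_0)\}_{i}\bigr)$ that $u(t_0)$ is of $\bigl(u(0),\{B_i(\cdot)\}_i\bigr)$, and these inputs have the same law. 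Hence the bracketed term has \emph{exactly} the distribution of $W_{N,t_0}^\theta-\ee[W_{N,t_0}^\theta]$, and by Theorem~\ref{thm: upper-bound} its variance is $\O(N^{2/3})=o(|t-t_0|)$. The remaining piece $B_0(t-t_0)$ is exactly $\mathcal N(0,|t-t_0|)$, so Slutsky finishes. For $t<t_0$ one runs the same trick after first using stationarity to shift the time window forward by $t_0$. The point is that instead of subtracting $W_{N,t^\ast}^\theta$ (which is correlated with $W_{N,t}^\theta$ in a way you would then have to untangle), you peel off a chunk of the \emph{Brownian motion} of length exactly $|t-t_0|$; the remainder then has small variance by the upper bound at the characteristic direction, and no cancellation analysis is needed.
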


In the case where $V(x)$ is quadratic, the system \eqref{eqn: system} is linear and admits an explicit solution as a Gaussian process.  In this case, we have the following, proven in Section \ref{sec:gauss}. 
\begin{prop}[The Gaussian case]\label{prop: normal}
Let $V(x)=\frac{x^2}{2}$.  Then, for each $\theta>0$, the random variable $W_{N, t}^\theta$ is a Gaussian with variance,
\beq
\Var ( W_{N, t}^\theta ) = (N-t) \left( 1- 2 \int_0^t \frac{s^{N-1}}{(N-1)!} \e^{ -s } \d s \right) + 2 \frac{ t^N}{(N-1)!} \e^{ -t }.
\eeq
When $t=N$, we obtain that the normalized quantity
\begin{equation*}
\big(\frac{\pi}{2}\big)^{1/4}\frac{W_{N,N}^\theta}{N^{1/4}}
\end{equation*}
is asymptotically a standard normal random variable as $N \to \infty$.
\end{prop}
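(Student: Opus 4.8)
The plan is to solve the linear system \eqref{eqn: system} explicitly when $V(x) = x^2/2$, so that $V'(x) = x$. The key observation is that the increments $u_j$ satisfy a linear SDE driven by the differences $\mathrm{d}B_j - \mathrm{d}B_{j-1}$ (with the extra drift $-\theta\,\mathrm{d}t$ in the first equation), so that $u(t)$ is an explicit Gaussian process: writing the drift matrix $M$ for the system $\mathrm{d}u = (-Mu - \theta e_1)\mathrm{d}t + \mathrm{d}\tilde{B}$, where $\tilde{B}_j = B_j - B_{j-1}$, one has $u(t) = \e^{-tM} u(0) + \int_0^t \e^{-(t-s)M}(\mathrm{d}\tilde{B}(s) - \theta e_1 \mathrm{d}s)$. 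Since $M$ is lower bidiagonal with $1$ on the diagonal, $\e^{-tM}$ has explicit entries involving $\e^{-t} t^k/k!$. The stationary (invariant) law $\omega_\theta$ in this case is, by \eqref{eqn:inv-meas-def}, a product of Gaussians: $\nu_\theta = \mathcal{N}(-\theta, 1)$, so $u(0)$ has iid $\mathcal{N}(-\theta,1)$ components, independent of the driving Brownian motions.

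First I would record that $W_{N,t}^\theta = \sum_j u_j(t) - B_0(t) + \theta t$ is a linear functional of Gaussians, hence Gaussian, with mean that can be computed but is irrelevant for the variance. Then I would compute $\Var(W_{N,t}^\theta)$ directly. There are two natural routes: (i) plug in the explicit solution and integrate, tracking the three contributions — from $u(0)$, from the $\tilde{B}$ integral, and the cross term with $-B_0(t)$ — and simplify; or (ly, and cleaner) (ii) use the invariance: by stationarity, $\sum_j u_j(t)$ and $\sum_j u_j(0)$ have the same marginal law, but they are \emph{not} independent of $B_0$, and the whole point is the cancellation. I would compute $\frac{\mathrm{d}}{\mathrm{d}t}\Var(W_{N,t}^\theta)$ using It\^o's formula on $W_{N,t}^\theta$: from \eqref{eqn: height-def} and the telescoping in \eqref{eqn: system}, $\mathrm{d}(\sum_j u_j) = (-V'(u_N) - \theta)\mathrm{d}t + \mathrm{d}B_N - \mathrm{d}B_0 = (-u_N - \theta)\mathrm{d}t + \mathrm{d}B_N - \mathrm{d}B_0$, so $\mathrm{d}W_{N,t}^\theta = -u_N(t)\,\mathrm{d}t + \mathrm{d}B_N(t)$. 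Hence $\Var(W_{N,t}^\theta)$ evolves via a formula involving $\int_0^t \ee[u_N(s)\,\mathrm{d}W]$-type covariances; iterating this reduces everything to the covariance function of the stationary Ornstein–Uhlenbeck-type coordinate $u_N$, which is explicitly $\ee[u_N(0) u_N(s)]$ and the cross-covariances $\ee[u_N(s) B_N(t)]$. These are computable from $\e^{-sM}$.

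The cleanest path: show $\mathrm{d}W_{N,t}^\theta = -u_N(t)\,\mathrm{d}t + \mathrm{d}B_N(t)$, so
\[
\Var(W_{N,t}^\theta) = \Var(W_{N,0}^\theta) - 2\int_0^t \Cov(u_N(s), W_{N,s}^\theta)\,\mathrm{d}s + \text{(Brownian and cross terms)},
\]
and $W_{N,0}^\theta = \sum_j u_j(0)$ has variance $N$ (since the $u_j(0)$ are iid with variance $\psiV_1(\theta) = 1$ for the Gaussian $\nu_\theta$). Then I would identify the correction terms with the quantities appearing in the claimed formula: the factor $\int_0^t \frac{s^{N-1}}{(N-1)!}\e^{-s}\,\mathrm{d}s$ is the CDF of a $\mathrm{Gamma}(N,1)$ law, which is exactly what one gets from $(\e^{-sM})_{NN}$ and its partial sums; the term $\frac{t^N}{(N-1)!}\e^{-t}$ is $t$ times the $\mathrm{Gamma}(N)$ density. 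So the bookkeeping is: express $\Cov(u_N(s), u_j(0))$ and $\Cov(u_N(s), B_k(s))$ via the matrix exponential, sum over $j$, integrate in $s$, and match. For the final $t = N$ asymptotics, substitute $t = N$ into the closed form: $(N - t) = 0$ kills the first term, and one is left with $2\frac{N^N}{(N-1)!}\e^{-N}$; Stirling gives $\frac{N^N \e^{-N}}{(N-1)!} = \frac{N^N \e^{-N} \sqrt{N}}{N!}\cdot\frac{N!}{N\,(N-1)!}$… more simply $\frac{N^N \e^{-N}}{(N-1)!} = \frac{N \cdot N^{N-1}\e^{-N}}{(N-1)!} \sim \frac{N}{\sqrt{2\pi N}} = \sqrt{\frac{N}{2\pi}}$, so $\Var(W_{N,N}^\theta) \sim 2\sqrt{N/(2\pi)} = \sqrt{2N/\pi}$, whence $(\pi/2)^{1/4} W_{N,N}^\theta/N^{1/4}$ has variance tending to $1$; being Gaussian, it converges to a standard normal.

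The main obstacle is the combinatorial/analytic bookkeeping in the middle step: correctly assembling the matrix-exponential entries, the sum over the $N$ coordinates, and the $s$-integration so that the incomplete Gamma functions and the Gamma density emerge with the right coefficients, including the sign pattern $(N-t)(1 - 2(\cdots))$. Everything is a routine but careful Gaussian computation; the only place an error is likely to creep in is in tracking the cross-covariance between $u_N(s)$ and the Brownian increments that also feed into $-B_0(t)$, since that is precisely the term responsible for the cancellation that reduces the naive $\O(N+t)$ bound of \eqref{eqn: rough} to $\O(N^{1/2})$ at $t = N$. The asymptotic step is then immediate from Stirling.
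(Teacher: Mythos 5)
Your proposal is correct and follows essentially the same route as the paper: solve the linear system explicitly, observe that $W_{N,t}^\theta$ is Gaussian, use the telescoped equation $\d W_{N,t}^\theta = -u_N(t)\,\d t + \d B_N(t)$ to reduce the variance to the cross-covariance $\ee[u_N(s)B_0(s)]$ (which satisfies the same recursion as the incomplete Gamma integrals), and finish with Stirling at $t=N$. One small slip: the telescoped sum should read $\d\big(\sum_j u_j\big) = (-u_N - \theta)\,\d t + \d B_0 + \d B_N$ (with $+\,\d B_0$, not $-\,\d B_0$); your stated conclusion $\d W_{N,t}^\theta = -u_N\,\d t + \d B_N$ is the correct one and only follows from the corrected intermediate identity.
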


  We complement the upper bound in Theorem \ref{thm: upper-bound} with the following. The proof appears at the end of Section \ref{sec:lower-proof}. 
\bet \label{thm:lower}
 Assume that $V$ is of O'Connell-Yor type and let $\theta_0 >0$ satisfy $\psiV_2 ( \theta_0 ) < 0$. Then, there is a $c>0$ so that
 \beq
 \Var\left( W_{N, t}^{ \theta_0} \right) \geq \max\{ | t- N \psiV_1 ( \theta_0 ) | , c N^{2/3} \}
 \eeq
\eet
The derivative $\psiV_2 ( \theta)$ equals (minus) the third central moment of a random variable distributed according to $\nu_\theta$. Lemma \ref{lem:psiV2} proves via a dynamical argument that $\psiV_2 ( \theta ) \leq 0$ for O'Connell-Yor type potentials. In the case $V(x) = \e^{-x}$ in fact $ (-1)^k \psiV_k ( \theta) >0$ for all $k \geq 2$. In the Gaussian case $V(x) = \frac{x^2}{2}$ we have $\psiV_2 ( \theta ) =0$ and the fluctuations are of lower order.

\section{Tools and techniques}
 In \cite{CN1}, the last two authors  gave an alternative proof of the result of Sepp\"al\"ainen and Valk\'o (the $N^{1/3}$ fluctuations of the stationary O'Connell-Yor polymer). This relied on interpreting the partition function as a log-moment generating function, allowing for the easy proof of certain monotonicity and convexity properties of the underlying couplings. For example, log moment generating functions are typically trivially seen to be convex functions, their second derivatives being a variance of the underlying random variable.

    The argument we give proving our  upper bound will follow in broad strokes that given in \cite{CN1}. For this to work, we will require  perturbations of the system under various parameters to have certain monotonicity and convexity properties.  This requires careful choice of the underlying couplings. In this section we will introduce our couplings and then give an outline of the proof of our upper bound. We will then discuss other aspects of our proofs. 

\subsection{Couplings}

For notational simplicity let us choose a realization of the Brownian motions driving \eqref{eqn: system} so that they are continuous for every point in the underlying probability space; for example the canonical realization on Wiener space suffices.  As a result of Appendix \ref{a:diff-1} we have the following.
\bep
For every choice of the initial data and every realization of the Brownian motions the system \eqref{eqn: system} has a unique strong solution $\{u_j \}_{j \geq 1}$ in that they are continuous functions satisfying,
\begin{align}
u_1 (t) - u_1(0) &= - \int_0^t (V'(u_1 (s) ) + \theta ) \d s + B_0 (t) + B_1 (t) \notag \\
u_j (t) - u_j (0) &= \int_0^t ( V' (u_{j-1} (s) ) - V' (u_j (s) ) ) \d s + B_j (t) - B_{j-1} (t) , \qquad j \geq 2
\end{align}
 for every $t \geq 0$.  
\eep
Given the previous, we can introduce the following couplings. 
\bed \label{def:coupling}
Let $\{q_i \}_{i=1}^\infty$ be iid uniform $(0, 1)$ random variables. For any $\theta >0$ define,
\beq
F_\theta (x) := \frac{1}{ Z ( \theta ) } \int_{-\infty}^x \e^{ - \theta u - V (u) } \d u,
\eeq
For any $\eta, \theta >0$ we now define $u_j (t, \eta, \theta)$ to be the solution of \eqref{eqn: system} with initial data,
\beq
u_j (0, \eta, \theta ) := F_\theta^{-1} ( q_j ).
\eeq
Note that $\{ u_j (0, \eta, \theta)\}_{j=1}^N$ are distributed as the invariant measure $\omega_\eta$ of the system \eqref{eqn: system} with $\theta = \eta$.
\eed
We will refer to $\eta$ as the \emph{initial data} parameter and $\theta$ as the \emph{driving} parameter. Given $\eta, \theta >0$ we introduce the two-parameter height function by,
 \beq \label{eqn:W-def}
 W_{N, t} ( \eta, \theta ) := \sum_{j=1}^N u_j (t, \eta, \theta ) - B_0 (t) + \theta t,
 \eeq
 so that the height function $W^\theta_{N, t}$ defined in \eqref{eqn: height-def} has the distribution $W_{N, t}^\theta \stackrel{d}{=} W_{N, t} ( \theta , \theta)$. 

\subsection{Variance formula}
In Section \ref{sec:representations}, we derive two representations for the variance in terms of derivatives of the height function with respect to the initial data and the parameter in the equations, respectively. For example, we have
\begin{equation}\label{eqn: rep}
\mathrm{Var}(W_{N,t}^\theta)=N\psiV_1(\theta)-t+2\mathbb{E} [\partial_\theta W_{N,t}(\eta,\theta)|_{\eta=\theta}].
\end{equation}
Note that $N \psiV_1 (\theta)-t = \O (N^{2/3})$ under the characteristic directions assumption \eqref{eqn:  characteristic-direction} on $N$ and $t$, so the main difficulty in obtaining the upper bound is to show that the term
\beq \label{eqn:show-this}
\mathbb{E} [\partial_\theta W_{N,t}(\eta,\theta)|_{\eta=\theta}]
\eeq
is of order $N^{\frac{2}{3}}$.


\subsection{Upper bound} \label{sec:upper-sketch}
In Section \ref{sec:upper}, we derive the upper bound. As stated, we must show that the term \eqref{eqn:show-this} is  $\O ( N^{2/3} )$.  At first glance, the computations we carry out in Section \ref{sec:upper} may be puzzling. 
In order to explain our proof, we will discuss in this section the O'Connell-Yor case $V(x) = \e^{-x}$.   In this case, the last two authors previously gave an alternative proof of the upper bound in \cite{CN1}, which served as the initial inspiration for our computations. 

For the O'Connell-Yor polymer, $W_{N,t}(\theta,\eta)$ has the same distribution as $\log Z_{N,t}^{\theta,\eta}$, where
\begin{equation}\label{eqn: 2-parameter-Z}
Z_{N,t}^{\theta,\eta} :=\int_{-\infty< s_0\leq  s_1\leq \ldots \leq s_{N-1}\leq t} \e^{\theta s_0^+-\eta s_0^- -B_0(s_0)+\sum_{j=1}^N B_j(s_j)-B_j(s_{j-1})} \,\mathrm{d}s_0\cdots \mathrm{d}s_{N-1}.
\end{equation}
Here $s_0^+=\max\{0,s_0\}$ and $s_0^-=\max\{0,-s_0\}$ are the positive and negative parts of $s_0$. Compare this to the log partition function \eqref{eqn: Z}, which equals $\log Z_{N,t}^{\theta,\theta}$. In this case,
\begin{equation}\label{eqn: partial}
\mathbb{E}[\partial_\theta W_{N,t}(\eta,\theta)|_{\eta=\theta}]=\mathbb{E}[E^\theta_{N,t}[s_0^+]] .
\end{equation}
Here $E_{N,t}^\theta$ denotes the random measure associated to the partition function \eqref{eqn: Z}:
\begin{align} \label{eqn:oy-gibbs}
    &E_{N,t}^\theta[f(s_0,\ldots,s_{N-1})] \notag\\
    :=&\frac{1}{Z_{N,t}^{\theta,\theta}}\int_{-\infty< s_0\leq  s_1\leq \ldots \leq s_{N-1}\leq t} \e^{\theta s_0 -B_0(s_0)+\sum_{j=1}^N B_j(s_j)-B_j(s_{j-1})} f(s_0,\cdots,s_{N-1})\,\mathrm{d}s_0\cdots \mathrm{d}s_{N-1}.
\end{align}
The key point is that the total derivative
\begin{equation}\label{eqn: total-der}
\frac{\mathrm{d}}{\mathrm{d}\theta} W_{N,t}(\theta,\theta)=E^\theta_{N,t}[s_0],\end{equation}
is easier to control than $\mathbb{E}[E_{N,t}^\theta[s_0^+]]$. For example, we have
\beq
\mathbb{E}[E^\theta_{N,t}[s_0]]=N\psi_1(\theta)-t=\O(N^{\frac{2}{3}}), 
\eeq
under the characteristic direction assumption \eqref{eqn:  characteristic-direction}. Here and below we let $\psi_k = \psiV_k$ in the O'Connell-Yor $V= \e^{-x}$ case. 
Writing
\begin{align}
(E_{N,t}^\theta[s_0])^2=~&(\partial_{\theta}^2 W_{N,t}(\theta,\eta)|_{\eta=\theta})^2+(\partial_{\eta}^2 W_{N,t}(\theta,\eta)|_{\eta=\theta})^2 \nonumber \\
&+2\,\partial_{\theta} W_{N,t}(\theta,\eta)|_{\eta=\theta}\cdot \partial_{\eta} W_{N,t}(\theta,\eta)|_{\eta=\theta} \label{eqn: cross}\\
=~&(E_{N,t}^\theta[s_0^+])^2+(E_{N,t}^\theta[s_0^-])^2 - 2E_{N,t}^\theta[s_0^+]E_{N,t}^\theta[s_0^-], \notag\\
\geq~&(E_{N,t}^\theta[s_0^+])^2 -2E_{N,t}^\theta[s_0^+]E_{N,t}^\theta[s_0^-]\label{eqn: macron}
\end{align}
we see that, to replace the $s_0^+$ by $s_0$, we must estimate the  cross term $E_{N,t}^\theta[s_0^+]E_{N,t}^\theta[s_0^-]$ from above.
This error term can be expected to be small. This is because it turns out that, with respect to the random measure $E_{N,t}^\theta$, $s_0$ is concentrated around $E_{N,t}^\theta[s_0]$ on a much smaller scale $\O(N^{1/2})$ than the typical size and standard deviation of $E_{N,t}^\theta[s_0]=\O(N^\frac{2}{3})$ with respect to $\mathbb{P}$. More precisely, by invariance, we have:
\begin{equation}\label{eqn: VV}
N \psi_2 ( \theta ) = \frac{\mathrm{d}^2}{\mathrm{d}\theta^2}\ee \left[ W_{N,t}(\theta,\theta) \right]= \ee \left[ E_{N,t}[(s_0-E_{N,t}[s_0])^2] \right],
\end{equation}
showing that the quenched variance is $\O (N)$ in expectation. 
This means that, for a given realization of the Brownian motions $B_0,\ldots,B_N$, the quantity $s_0$ lies within $N^{1/2}$ of $E_{N,t}^\theta[s_0]$ which is of the order $N^{2/3}$. Therefore, 
 $s_0$ is very likely to have the same sign as this expectation, so that $E_{N,t}^\theta[s_0^+]E_{N,t}^\theta[s_0^-]$ should be small. The observation in \cite{CN1} making this picture rigorous is that the RHS of \eqref{eqn: VV} equals
\begin{equation}\label{eqn: disjoint}
\mathbb{E}\big[E_{N,t}[(s_0^+-E_{N,t}[s_0^+])^2]\big]+\mathbb{E}\big[E_{N,t}[(s_0^--E_{N,t}[s_0^-])^2]\big]+2 \mathbb{E}\big[E_{N,t}[s_0^+] E_{N,t}[s_0^-]\big],
\end{equation}
due to the fact that $s_0^+$ and $s_0^-$ have disjoint support. This shows that the cross-term on the last line of \eqref{eqn: macron} is at most $\O(N)$, an acceptable error compared to the target bound for $\ee \left[ E_{N,t}^\theta[s_0^+] \right]^2=\O(N^{\frac{4}{3}})$.

Next, once one has replaced $\del_\theta W_{N, t}$ by the total derivative, we can apply  convexity of $\theta \mapsto W_{n, t} ( \theta, \theta)$. That is, the derivative can be bounded in terms of difference quotients: 
\beq \label{eqn: rhs-sketch}
\frac{\mathrm{d}}{\mathrm{d}\theta} W_{N,t}(\theta,\theta)\leq \frac{W_{N,t}(\theta,\theta)-W_{N,t}(\lambda,\lambda)}{\lambda-\theta} ,\eeq 
where $\lambda = \theta \pm N^{-1/3}$ (depending on if one is estimating the total derivative above or below in magnitude). So, this implies that $E^\theta_{N,t}[s_0^+]^2$ can be controlled by the square of the RHS of \eqref{eqn: rhs-sketch}. But these quantities can be related back to the variance of $W_{N, t}^\theta$ and $W_{N, t}^\lambda$. In order to move the $\lambda$ \emph{back} to $\theta$, we use the estimates,
\beq
\mathbb{E}[W_{N,t}(\theta,\theta)]-\mathbb{E}[W_{N,t}(\lambda,\lambda)]=\O(N^{\frac{1}{3}})
\eeq
which uses the characteristic direction assumption (see \eqref{eqn:expect-comp} in the general setting of this paper)
as well as a similar estimate for perturbing the variance (see Lemma \ref{lem:var-comp}). Hence, we obtain
\beq
\mathbb{E}[E^\theta_{N,t}[s_0^+]] \leq \mathbb{E}[E^\theta_{N,t}[s_0^+]^2]^{1/2} \leq C(N^{1/3} ) ( \mathcal{V} + N^{1/3} ) 
\eeq
 where $ \mathcal{V}=\mathrm{Var}^\theta(W^\theta_{N,t})$. Combined with the variance representation \eqref{eqn:  rep} we obtain,
\beq
\mathcal{V}\leq CN^{\frac{1}{3}}(\mathcal{V}^{\frac{1}{2}}+N^{\frac{1}{3}})
\eeq
which completes the proof. 

Reproducing the above strategy in our setting requires overcoming a number of serious obstacles. We list the most important ones below.
\begin{enumerate}[label=(\arabic*)]
    \item In the O'Connell-Yor case, the expression \eqref{eqn: 2-parameter-Z} provides a natural coupling between the solutions with different inital data that is monotonone and convex in $\eta$, crucial properties for the proof. No such coupling is provided \emph{a priori} for the system \eqref{eqn: system}.
    \item For general potential $V$, the height function $W_{N,t}(\theta,\theta)$ is only defined by analogy with the O'Connell-Yor log partition function. It is not clear what the meaning of the Gibbs measure $E^\theta_{N,t}[\cdot]$ or $s_0$ should be in the general case. The interpretation of the derivatives in terms of expectations is used crucially in several places like \eqref{eqn: macron}.
    \item Given a suitable coupling of the initial data, the derivatives of $W_{N,t}(\theta,\theta)$ in the case of general $V$ share certain good features, such as their sign and monotonicity, with the corresponding quantities in the O'Connell-Yor case. We exploit these more fully in our proof of the lower bound. However, there is no exact relation analogous to \eqref{eqn: disjoint} between the second derivative \eqref{eqn: VV} and the cross-term in \eqref{eqn: cross} in the general $V$ case, and so a different approach must be used to estimate this term.
\end{enumerate}

\subsection{Lower bound} 

Section \ref{sec:lower} contains our proof of Theorem \ref{thm:lower}, our lower bound.  We will see that, due to the lower bound $|N \psiV_2 ( \theta ) - t|$ for the variance, it suffices to consider the case that $t = N \psiV_2 ( \theta)$. Our proof draws some inspiration from \cite{MFSV}. This is essentially a change of measure argument which attempts to make the event $\{ W_{N, t}^\theta - \ee[ W_{N, t}^\theta] \geq c N^{1/3} \}$ typical by relating $W_{N, t}^\theta$ to $W_{N, t}^\lambda$, for $\lambda = \theta + N^{-1/3}$. 

The main observation we use from \cite{MFSV} is that, due to the Cameron-Martin theorem and Cauchy-Schwarz, one can change the driving parameter $\theta$ to $\lambda$ for $s \in [0,  T N^{2/3}]$ for any $T>0$ at the cost of an overall \emph{multiplicative} constant in the probabilities. After doing so, one still must change the driving parameter in the remaining range of $s \geq TN^{2/3}$ as well as in the initial data.  The initial data change turns out to be monotonic in the correct direction, so only the large time regime must be handled. 

In the O'Connell-Yor case, this perturbation of the parameter would be related to the quenched probability that $\{s_0 > T N^{2/3} \}$. One would require a bit more concentration than is contained in the estimate of $\O (N^{2/3})$ for the variance of $W_{N, t}^\theta$ (as the variance decomposition \eqref{eqn:  rep} provides an estimate for the annealed expectation of $s_0^+$). For example, one could compute a fourth central moment. 

In our case, our argument rests on our discovery of a random functional that we call the \emph{pseudo-Gibbs} measure, and denote by $\Eet_{N, t}$. This is introduced in Section \ref{sec:psg}. It has an explicit form as an integral over the $N$-simplex of positive times and is reminiscent of \eqref{eqn:oy-gibbs}. (One could extend it to negative times with a bit more work but this is unnecessary for us.) While in fact $\del_\theta W_{N, t} ( \eta, \theta) = \Eet_{N, t} [ s_0^+]$ as for the O'Connell-Yor polymer, this is no longer true for higher derivatives and in general deducing desired properties of $\Eet_{N, t}$ as function of various parameters requires indirect reasoning. Nonetheless, it defines a measure on the simplex with total mass less than $1$ and so one can apply standard analytic arguments, e.g., Cauchy-Schwarz, etc.

In order to control the perturbation of $\theta$ for large times, we prove moderate deviation exponential tail estimates of $s_0^+$ with respect to the annealed pseudo-Gibbs measure.  Here, we roughly follow an argument of Emrah-Janjigian-Seppalainen \cite{EJS}. A crucial input is that, in analogy with a result of  Rains
\cite{rains} and Emrah-Janjigian-Seppalainen \cite{EJS}  for Last Passage Percolation with exponential weights on $\mathbb{Z}^2$ (the discrete, ``zero temperature'' version of the O'Connell-Yor polymer), one can derive an exact expression for
\beq \label{eqn:int-ejs}
\ee\left[ \exp \left(  ( \eta - \theta ) W_{N, t} ( \eta , \theta ) \right) \right] 
\eeq
using the Cameron-Martin formula. 

We remark here that under the additional assumption $\psi_2^V(\theta)<0$, our upper bound \eqref{eqn: variance-upper-bound} for the variance also follows from the estimates we obtain for the pseudo-Gibbs measure. Since the derivation of these estimates is at least as involved as the proof of the upper bound (compare for example Propositions \ref{prop:mixed} and \ref{lem: pseudo-monotone}), and since the upper bound can be obtained without introducing additional definitions or assumptions,  we present separate proofs for the upper and lower bounds.

\subsection{Organization of the remainder of paper}

In Section \ref{sec:upper} we will give the proof of Theorem \ref{thm: upper-bound}, our upper bound for the variance of $W_{N, t}^\theta$, assuming a number of intermediate results which are stated in the course of the proof, and whose proofs are given in Section \ref{sec:aux-upper}. The proof given in Section \ref{sec:upper} follows along the general lines of the argument sketched above in the O'Connell-Yor case. 

The various auxilliary results proven in Section \ref{sec:aux-upper} are as follows. Our representation of the variance is stated as Lemma \ref{lem:var-rep} and is proven in Section \ref{sec:representations}. Various monotonicity properties of the first derivatives are stated and proven in Section \ref{sec:aux-1d}.  In Section \ref{sec:aux-2d} we collect the various properties of the second derivatives of $W_{N, t} ( \eta , \theta)$ that we need. In particular, our substitutes for the elementary convexity in the O'Connell-Yor case, which are Lemmas \ref{lem:2nd-der-basic} and \ref{lem:2nd-ii}, are proven in Section \ref{sec:aux-2nd-basic}. Our result that treats the analog of the cross term on the last line of \eqref{eqn: macron} is Proposition \ref{prop:mixed} and is proven in Section \ref{sec:aux-2nd-mixed}.  Finally, a result concerning the stability of the variance under change of parameters is stated as Lemma \ref{lem:var-comp} and is proven in Section \ref{sec:aux-var-comp}.

Our lower bound, Theorem \ref{thm:lower}, is proven over the course of Sections \ref{sec:psg} and \ref{sec:lower}. In Section \ref{sec:generating} we use the Cameron-Martin theorem to derive an exact expression for certain exponential moments of $W_{N, t} ( \eta , \theta)$, the quantity appearing above in \eqref{eqn:int-ejs}. The pseudo-Gibbs measure is introduced in Section \ref{sec:psg-2} and in Section \ref{sec:psg-conc} we derive an upper tail bound for the random variable $s_0$ under the pseudo-Gibbs measure. In order to prove our lower bound, we introduce a three-parameter height function in Section \ref{sec:three-param} which is only a slight generalization of $W_{N, t} ( \eta, \theta)$. The main argument in the lower bound is given in Proposition \ref{prop:lower}, as it deals with the case of vanishing characteristic direction, i.e., when the quantity $t - N \psiV_1 ( \theta ) =0$. The general case is an easy corollary and so the full proof of Theorem \ref{thm:lower} is given in Section \ref{sec:lower-proof}. 

In Section \ref{sec:gauss} we deal with the Gaussian case, giving the short proof of Proposition \ref{prop: normal}. This boils down to a calculation of the variance as an integral, which in the case $N=t$ may be analyzed via Stirling's formula.

For ease of presentation, we have deferred most of the ``soft'' analysis to the appendices. These include the proof of well-posedness of the equations \eqref{eqn: system} and the determination of the invariant measure, which are both handled in Appendix \ref{a:diff-1}. Since the ``drift" terms $V'(u_j)$ appearing in the equation can have super-linear growth, the existence of global solutions does not follow directly from the most basic existence theorems for SDEs in the case where $V$ is of O'Connell-Yor type. However, our assumptions on $V(x)$ imply that it is strongly confining for negative $x$ and sublinear for $x>0$. Two further facts which simplify the analysis are the constant diffusion coefficients and the triangular nature of the system, that is, for each $k$, the first $k$ equations form a closed system.

As can be seen from the discussions above, we will often need to differentiate various quantities with respect to the parameters $\eta$ and $\theta$.  The differentiability is treated in Appendix \ref{a:diff}. If the reader accepts differentiability, then this section can be safely ignored, except for Appendix \ref{a:ode} which contains a few elementary properties of triangular systems of ordinary differential equations. The punch-line is that the various derivatives of the $u_j$ and $W_{N, t}$ will satisfy triangular systems of ODEs, resulting in various a priori bounds.  Appendix \ref{a:1b} contains some elementary calculations using calculus primarily for the purposes of showing that derivatives of the initial data with respect to $\eta$ (recall the coupling given in Definition \ref{def:coupling} above) have finite moments. This is also required to differentiate under the integral sign in a few places throughout our proofs. Finally Appendix \ref{a:psiV2} contains the proof that $\psiV_2 ( \theta ) \leq 0$ under our assumptions.

\subsection{Future perspectives}

Exhibiting the KPZ universality for non-integrable random growth models remains a challenging research direction. In this context, the system of interacting diffusions studied in this work is particularly exciting as it may offer a tractable setting in which to prove such universality results. In particular, due to the simple way in which the randomness of the Brownian motion terms enter in the system \eqref{eqn: system}, it may be possible to apply some of the ideas that appear in recent works on ergodicity of Dyson Brownian motion \cite{LY1,LY2,LSY}. Here, the main point is that by coupling general systems to those in equilibrium (by allowing them to have the same underlying driving Brownian motion terms, similar to how we have coupled the systems at different parameters) one derives parabolic difference equations for their differences, allowing for the use of PDE methods to study the time to local equilibrium (e.g., the energy method).  The dynamical methods of random matrix theory (see \cite{EY}) have inspired recent studies of the universality of lozenge tilings of general domains \cite{A}, and it is still unclear what is the full range of applicability of these ideas.

Important to the dynamical approach is obtaining certain a priori bounds on the quantities in play, as these are used to estimate, e.g., coefficients in the derived parabolic equations. In the eigenvalue context these are known as the local laws or rigidity results. In our setting, the variance estimates may be viewed as progress towards such estimates. In \cite{CN1}, the last two authors obtained concentration estimates for the O'Connell-Yor polymer. The starting point, a recursive scheme for estimating higher moments based on lower ones via Gaussian-integration-by-parts, is applicable here (remarkably, such schemes - using cumulant expansions in the place of Gaussian-integration-by-parts - have also found much recent success in random matrix theory, and are used to derive rigidity results in this context see \cite{MDE}). Both the universality of our model as well as obtaining moderate deviations results (i.e., an exponential tail estimates similar to \cite{EJS}) are subjects of current investigation.

\section{Proof of upper bound} \label{sec:upper}

Recall the two-parameter height function $W_{N, t} ( \eta, \theta)$ as defined in \eqref{eqn:W-def}. We will denote derivatives with respect to the first parameter by $\del_\eta$ (the initial data parameter) and those with respect to the second parameter (the driving parameter) by $\del_\theta$. Note that we will evaluate the two-parameter height function on the diagonal $\theta =\eta$ and so the notations $(\del_\eta W_{N, t} ) ( \theta , \theta)$ and $(\del_\theta W_{N, t} ) ( \theta, \theta)$, etc., are understood.

Our starting point is the following representation for the variance.
\bel \label{lem:var-rep}
We have,
\begin{align} \label{eqn:var-rep}
\Var (W_{N, t} ( \theta, \theta ) ) &= N \psiV_1 ( \theta) - t + 2 \ee[ ( \del_\theta W_{N, t} ) ( \theta , \theta ) ] \notag\\
&= t - N \psiV_1 ( \theta)  - 2 \ee[ ( \del_\eta W_{N, t} ) ( \theta, \theta ) ].
\end{align}
\eel
The proof is given in Section \ref{sec:representations}.  From the first representation in \eqref{eqn:var-rep}, it suffices to prove the estimate,
\beq \label{eqn:main-pr-1}
\ee[ \del_\theta W_{N, t} ( \theta, \theta) ] \leq  CN^{2/3} + CN^{1/3} \left( \Var (W_{N, t} ( \theta, \theta ) ) \right)^{1/2}
\eeq
which will be the goal of the remainder of the proof. In order to handle the LHS, we apply Cauchy Schwarz and estimate,
\begin{align}
\ee[ ( \del_\theta W_{N, t} ) ( \theta, \theta )]^2 \leq~ &\ee[ ( \del_\theta W_{N, t} ) ( \theta, \theta )^2 ]  \notag\\
\leq ~& \ee \left[ \left( \frac{ \d }{ \d \theta} W_{N, t} ( \theta, \theta)\right) ^2 \right]  -2 \ee[ \del_\eta W \del_\theta W].
\end{align}
The cross term is difficult to estimate. We will prove the following two results which relate it to the second derivative.
\bep \label{prop:mixed}
For all $N, t$ and $\theta$ we have,
\beq
0 \leq -\left( \del_\eta W_{N,t}  \right)(\theta, \theta)  \times \left( \del_\theta W_{N, t} \right) (\theta, \theta)  \leq \left( \del_\eta \del_\theta W_{N, t}  \right)  (\theta, \theta) .
\eeq
\eep
\bel \label{lem:2nd-der-basic}
We have for all $\eta, \theta >0$ and $N$ and $t \geq 0$,
\beq \label{eqn:2nd-der-basic-1}
\left( \del_\theta^2 W_{N, t} \right) ( \eta, \theta) \geq 0, \qquad \left(  \del_{\eta} \del_{\theta} W_{N,t } \right) ( \eta, \theta ) \geq 0 .
\eeq
Uniformly in $\eta, \theta$ varying in compact subsets of $(0, \infty)^2$ we have for all $N, t\geq0$ that,
\beq
\ee\left[ \left( \del_\eta^2 W_{N, t}  \right) ( \eta , \theta) \right] \geq - C N.
\eeq
\eel
Proposition \ref{prop:mixed} is proven in Section \ref{sec:aux-2nd-mixed} and Lemma \ref{lem:2nd-der-basic} is proven in Section \ref{sec:aux-2nd-basic}.  Using the above results and the identity
\[2 \del_{\eta} \del_{\theta} W_{N,t } =  \frac{ \d^2 }{ \d \theta^2 } W_{N, t} ( \theta , \theta )-\del_\eta^2 W_{N, t} -\del_\theta^2 W_{N, t},\]
we have
\begin{align} \label{eqn:upper-c2}
\ee[ ( \del_\theta W_{N, t} ) ( \theta, \theta )^2 ]  &\leq C N + \ee[ \left( \frac{ \d }{ \d \theta} W_{N, t}  ( \theta, \theta)\right)^2 ]  + C \frac{ \d^2 }{ \d \theta^2 } \ee[ W_{N, t} ( \theta , \theta ) ] \notag\\
&\leq  CN +\ee[ \left( \frac{ \d }{ \d \theta} W_{N, t}  ( \theta, \theta)\right)^2 ]
\end{align}
where we used $\ee[ W_{N, t} ( \theta, \theta) ] = N \psiV_1 ( \theta)$ in the second line (interchange of derivative and expectation is justified in Proposition \ref{prop:dif-under}). Let now $\eps_0 = \theta/2$ and consider $\lambda$ s.t. $| \lambda- \theta| \leq \eps_0$. Then by the nonnegativity in \eqref{eqn:2nd-der-basic-1} we have by Taylor expansion,
\begin{align} \label{eqn:taylor-exp}
W_{N, t} ( \lambda, \lambda ) - W_{N, t} ( \theta, \theta) &\geq  ( \lambda - \theta) \frac{ \d }{ \d \theta} W_{N, t} ( \theta, \theta) \notag\\
&- ( \lambda - \theta)^2 \left( \sup_{ \theta' : |\theta- \theta'| \leq \eps_0 }  \left( (\del_\eta^2 W_{N, t} ) ( \theta', \theta' )  \right)_- \right) ,
\end{align}
where $(a)_- := 0 \vee (-a)$ denotes the negative part of $a$. 
For the quantity on the RHS we prove the following in Section \ref{sec:aux-2nd-basic}. 
\bel \label{lem:2nd-ii}
We have,
\beq
\ee\left[ \left( \sup_{ |\theta' - \theta| \leq \eps_0 } \left( (\del_\eta^2 W_{N, t} ) ( \theta', \theta' )  \right)_- \right)^2 \right] \leq C N^2
\eeq
\eel
Taking $\lambda_\pm := \theta \pm N^{-1/3}$ we see from \eqref{eqn:taylor-exp} (after moving the $(\lambda_\pm -\theta)$ to the LHS)
\begin{align} \label{eqn:upper-c1}
\ee\left[ \left( \frac{ \d }{ \d \theta} W_{N, t}  ( \theta, \theta)\right)^2 \right] &\leq C \big\{ N^{4/3} + N^{2/3} \ee[ ( W ( \lambda_+ , \lambda_+ ) - W_{N, t} ( \theta, \theta ) )^2] \notag\\
&+ N^{2/3} \ee[ ( W ( \lambda_- , \lambda_- ) - W_{N, t} ( \theta, \theta ) )^2] \big\}
\end{align}
Since $\del_\theta \ee[ W_{N, t} ( \theta, \theta) ] = N \psiV_1 (\theta) - t = \O (N^{2/3})$ by the assumption \eqref{eqn:  characteristic-direction} that we are in a characteristic direction, we see that
\beq \label{eqn:expect-comp}
\left| \ee[ W_{N, t} ( \lambda_\pm , \lambda_\pm )] - \ee[ W_{N, t} ( \theta, \theta ) ]  \right| \leq C N^{1/3}.
\eeq
In addition, we have the following estimate for comparing the variance of $W_{N, t} ( \lambda, \lambda)$ back to that of $W_{N, t} ( \theta, \theta)$. It is proven in Section \ref{sec:aux-var-comp}. 
\bel \label{lem:var-comp}
Uniformly for $\theta$ and $\lambda$ varying over compact subsets of $(0, \infty)^2$ we have,
\beq
|\Var ( W_{N, t} ( \theta, \theta ) ) - \Var (W_{N, t} ( \lambda, \lambda ) ) | \leq CN | \theta - \lambda|.
\eeq
\eel
Using Lemma \ref{lem:var-comp} as well as \eqref{eqn:expect-comp} we obtain,
\begin{align}
\ee[ ( W ( \lambda_\pm , \lambda_\pm ) - W_{N, t} ( \theta, \theta ) )^2] &\leq  2  \Var ( W_{N, t} ( \lambda_\pm , \lambda_\pm ) ) + 2 \Var ( W_{N, t} ( \theta , \theta ) )  \notag\\
&+4 \left( \ee[ W_{N, t} ( \lambda_\pm, \lambda_\pm ) ] - \ee[ W_{N, t} ( \theta, \theta ) ] \right)^2 \notag\\
& \leq C \Var( W_{N, t} ( \theta, \theta ) ) + C N^{2/3}
\end{align} 
Plugging this estimate into the RHS of \eqref{eqn:upper-c1}, which in turn is used to estimate the last term on the 
RHS of \eqref{eqn:upper-c2}, we see that we have derived,
\beq
\ee[ ( \del_\theta W_{N, t} ) ( \theta, \theta ) ]^2 \leq C N^{4/3} +C N^{2/3} \Var ( W_{N, t} ( \theta, \theta) )
\eeq
which is equivalent to \eqref{eqn:main-pr-1}. We have therefore derived the inequality,
\beq
 \Var ( W_{N, t} ( \theta, \theta) ) \leq C\left( N^{2/3} + N^{1/3} \left(  \Var ( W_{N, t} ( \theta, \theta) ) \right)^{1/2} \right)
\eeq
which, after applying Cauchy-Schwarz to the RHS, proves Theorem \ref{thm: upper-bound}. \qed

\section{Proofs of auxilliary results used in upper bound} \label{sec:aux-upper}

In this section we collect the proofs of the various estimates used in the proof of Theorem \ref{thm: upper-bound} given in Section \ref{sec:upper}. 

\subsection{Variance representation} \label{sec:representations}

In this section we prove our variance representation, Lemma \ref{lem:var-rep}. We comment here that existence of the derivatives is justified by Corollaries \ref{cor:1st-der-1} and \ref{cor:1st-der-2}. We first prove the following.
\bel \label{lem:ibp}
We have,
\beq
\ee[ B_0 (t) W_{N, t} ( \eta, \theta) ] = - \ee \left[ \left( \del_\theta W_{N, t}  \right)( \eta , \theta ) \right]
\eeq
\eel
\proof Let us temporarily indicate the explicit dependence of $W_{N, t} ( \eta , \theta)$ on the Brownian motion $B_0$ by introducing the notation $\tilde{W}_{N, t} ( \eta, \theta, B_0 )$. Then,
\beq
W_{N, t} ( \eta, \theta+h) = \tilde{W}_{N, t} ( \eta , \theta+h, B_0 ) = \tilde{W}_{N, t} (\eta, \theta , \tilde{B}_{0, h} )
\eeq
where,
\beq
\tilde{B}_{0, h} (s ) := B_0(s) - h s.
\eeq
By the Cameron-Martin formula,
\beq
\ee[ \tilde{W}_{N, t} ( \eta, \theta , \tilde{B}_{0, h} ) ] = \ee[ \tilde{W}_{N, t} ( \eta , \theta, B_0 ) \e^{ - h (B_0 (t) - \frac{h^2}{2} t } ].
\eeq
Therefore,
\beq
\frac{\ee[W_{N, t} ( \eta, \theta+h ) ] - \ee[ W_{N, t} ( \eta , \theta ) ] }{h} = \ee\left[ \tilde{W}_{N, t} ( \eta, \theta, B_0 ) \left( \frac{ \e^{ - h B_0 (t) -\frac{h^2 t}{2} } - 1 }{h} \right) \right]
\eeq
By dominated convergence (using Proposition \ref{prop:Wnt-bad-bd} together with Cauchy-Schwarz) the RHS converges to,
\beq
\lim_{h \to 0 } \ee\left[ \tilde{W}_{N, t} ( \eta, \theta, B_0 ) \left( \frac{ \e^{ - h B_0 (t) -\frac{h^2 t}{2} } - 1 }{h} \right) \right] = - \ee[ W_{N, t} ( \eta , \theta ) B_0 (t) ].
\eeq
On the other hand, by Proposition \ref{prop:dif-under} we have,
\beq
\lim_{h \to 0 } \frac{\ee[W_{N, t} ( \eta, \theta+h ) ] - \ee[ W_{N, t} ( \eta , \theta ) ] }{h}  = \ee\left[ \left( \del_\theta W_{N, t} \right) ( \eta , \theta ) \right].
\eeq
This yields the claim. \qed 

\remark As can be seen from the above proof, the derivative $\left( \del_\theta W_{N, t} \right) ( \eta , \theta)$ can be expressed as the directional (or Malliavin) derivative of $W_{N, t}$  obtained by perturbing  the Brownian motion $B_0$ in the direction $ - \int_0^\cdot  1_{[0,t]} \d s$.  The interested reader will find more information on the terminology ``Malliavin derivative" in \cite[Section 1.2]{malliavin}, but note that we will not be using any refined properties of these objects, other than the ones that can be obtained directly from the equations. \qed

\noindent{\bf Proof of Lemma \ref{lem:var-rep}}. Define
\beq
R_{N, t} ( \eta, \theta ) := \sum_{j=1}^N u_j ( t , \eta , \theta).
\eeq
Then,
\beq
\Var\left( W_{N, t} ( \theta, \theta ) \right) = \Var \left( R_{N, t} ( \theta , \theta ) \right) - t -2 \ee[ W_{N, t} ( \theta,  \theta) B_0 (t) ].
\eeq
The first line of \eqref{eqn:var-rep} now follows from Lemma \ref{lem:ibp} and the fact that,
\beq
\Var \left( R_{N, t} ( \theta , \theta ) \right) = N \Var ( u_1 (t , \theta, \theta ) ) = N \psiV_1 ( \theta ).
\eeq
 The second follows from the identity, 
 \beq
t- N \psiV_1 (\theta)  = \frac{ \d}{ \d \theta} ( t \theta - N \psiV_0 ( \theta) ) = \frac{ \d}{ \d \theta}  \ee[ W_{N, t} ( \theta , \theta ) ] = \ee[ \left( \del_\eta W_{N, t} \right) ( \theta , \theta ) ] + \ee[ \left( \del_{\theta} W_{N, t} \right) ( \theta, \theta ) ],
 \eeq
 where the differentiation under the integral is justified by Proposition \ref{prop:dif-under}. \qed
 
\subsection{First derivatives} \label{sec:aux-1d}

In this intermediate section we prepare  a few results about the signs of the first derivatives of the quantities involved in our proof. For the reader's convenience, we first restate Corollaries \ref{cor:1st-der-1} and \ref{cor:1st-der-2} as follows.

\bep \label{prop:1st-der-system}
The derivatives,
\beq
h_j (t, \eta, \theta)  := \left( \del_\theta u_j \right) ( t , \eta , \theta ) , \qquad k_j (t , \eta , \theta) := \left( \del_\eta u_j \right) ( t , \eta , \theta)
\eeq
exist and satisfy the systems of ODEs,
\begin{align}
\del_t h_1 (t)  &= - V'' (u_1 (t) ) h_1 (t) - 1 \notag\\
\del_t h_j (t) &= - V''(u_j (t) ) h_j (t) + V'' (u_{j-1} (t) ) h_{j-1} (t) , \qquad j \geq 2
\end{align}
and
\begin{align}
\del_t k_1 (t) &= - V'' (u_1 (t) ) k_1 (t)  \notag\\
\del_t k_j (t) &= - V'' (u_j  (t) ) k_j (t) + V'' (u_{j-1} (t) ) k_{j-1} (t) , \qquad j \geq 2
\end{align}
where we abbreviated $h_j (t) = h_j (t, \theta, \eta)$, etc. The initial data is $h_j (0) = 0$ for all $j$ and $k_j (0) \leq 0$.
\eep

Using the above we can easily derive the following (in fact, some of the signs appearing below were already stated in Corollaries \ref{cor:1st-der-1} and \ref{cor:1st-der-2}, but the idea of using the systems of ODEs satisfied by the various quantities to deduce monotonicity properties is important for our methods and so the repetition is warranted).
\bep
We that for all $ t \geq 0$  and all $j, n$  and all $\eta, \theta >0$ that,
\beq \label{eqn:1der-sign-1}
h_j (t , \theta , \eta ) \leq 0 , \qquad k_j ( t , \theta , \eta ) \leq 0.
\eeq
\beq\label{eqn:1der-sign-2}
\left( \del_\theta W_{N, t} \right) ( \eta, \theta ) \geq 0, \qquad \left( \del_\eta W_{N, t} \right) ( \eta , \theta ) \leq 0 .
\eeq
\eep
\proof The inequalities \eqref{eqn:1der-sign-1} follow from Lemmas \ref{lem:ode-1} and \ref{lem:ode-2} and the fact that $V'' (x) \geq 0$. The second inequality of \eqref{eqn:1der-sign-2} is immediate. For the first inequality we note that $w_j(t) := \del_\theta W_{j, t} ( \eta , \theta)$ satisfies the system,
\begin{align}
\del_t w_1 (t) &= - V''(u_1 (t) ) w_1 (t)  + V'' (u_1 (t) ) t \notag\\
\del_t w_j (t) &= - V''(u_j (t) ) w_j (t) + V'' (u_j (t) ) w_{j-1} (t),
\end{align}
with $0$ initial condition. The nonnegativity of the $w_j(t)$ then follows from Lemma \ref{lem:ode-4}. \qed

\bec \label{cor:bad-lower} 
We have the estimate,
\beq
\Var ( W_{N, t} ( \theta , \theta ) ) \geq |N \psiV_1 ( \theta) -t |.
\eeq
\eec
\proof This follows immediately from using the two representations of Lemma \ref{lem:var-rep} and the inequalities \eqref{eqn:1der-sign-2}. \qed

\subsection{Second derivatives} \label{sec:aux-2d}

In this section we prove the various auxilliary results used in the proof of Theorem \ref{thm: upper-bound} that rely on the second derivatives. We first state the following, whose proof is deferred to Appendix \ref{a:2-der-summary}.
\bep \label{prop:2nd-der-system}
The functions $u_j (t , \eta, \theta)$ are $C^2$ in the parameters $( \eta, \theta)$. Consider the system of ODEs for the functions $f_j (t)$ given the inhomogeneous terms $g_j (t)$,
\begin{align} \label{eqn:2nd-der-system}
\del_t f_1 (t ) &= - V'' (u_1  (t) ) f_1 (t) - V'''(u_1 (t) ) g_1 (t) \notag\\
\del_t f_j (t) &= - V'' (u_j (t) ) f_j(t) + V'' (u_{j-1} (t) ) f_{j-1} (t) \notag\\
& - V''' (u_j (t) ) g_j (t) +  V''' (u_{j-1} (t) ) g_{j-1} (t) ,
\end{align}
where $u_j (t) = u_j (t, \eta, \theta)$ as usual. 
The second deriviatives $\udd_j, \udi_j$ and $\uii_j$ all are solutions of this system, with $g_j = h_j^2, h_j k_j, k_j^2$, respectively, where the $k_j$ and $h_j$ are the derivatives described in Proposition \ref{prop:1st-der-system}. In each case, $g_j(t) \geq 0$ for all $t$ and furthermore the initial conditions for $\udd_j$ and $\udi_j$ are $\udd_j (0) = \udi_j (0) = 0$.
\eep

\subsubsection{Signs of second derivatives; proofs of Lemma \ref{lem:2nd-der-basic} and \ref{lem:2nd-ii}} \label{sec:aux-2nd-basic}

The entirety of this subsection is devoted to the proofs of Lemma \ref{lem:2nd-der-basic} and Lemma \ref{lem:2nd-ii}.  Consider first the case $w_j (t) = \left( \del_\theta^2 W_{j, t} \right) ( \eta , \theta )$ or $w_j (t) = \left( \del_\theta \del_\eta W_{j, t} \right) ( \eta , \theta)$. From Proposition \ref{prop:2nd-der-system} we see that these solve the system,
\begin{align}
\del_t w_j (t) = - V'' (u_j (t) ) w_j (t) + V'' (u_j (t) ) ) w_{j-1} (t) - V'''(u_j (t) ) g_j (t)
\end{align}
where the $g_j (t)$ are nonnegative and we defined $w_0 (t) = 0$. Since $V''' \leq 0$ we see that $w_j (t) \geq 0$ for all $t$ and $j$ by Lemma \ref{lem:ode-4}. 

We turn now to the claimed estimates for $\left( \del_\eta^2 W_{N, t} \right) ( \eta , \theta)$. Let us decompose
\beq
\uii_j (t , \theta , \eta ) = f_j (t) + m_j (t)
\eeq
where $f_j (t)$ satisfies the system \eqref{eqn:2nd-der-system} with $0$ initial condition and $g_j (t) = k_j(t)^2$. Then $m_j(t)$ is the solution to the homogeneous system of ODEs,
\begin{align}
\del_t m_1 (t) &= - V'' (u_1 (t) ) m_1 (t) \notag\\
\del_t m_j (t) &= - V'' (u_j (t) ) m_j (t) + V'' (u_{j-1} (t) ) m_{j-1} (t) 
\end{align}
with initial condition $m_j (0) = \uii_j (0)$. Then, arguing as above in the cases of $ \del_\theta^2 W_{j, t} $ and $\del_\eta \del_\theta W_{j, t}$ we see that
\beq
\sum_{j=1}^N f_j (t) \geq 0
\eeq
for all times $t$. On the other hand, by the last estimate of Lemma \ref{lem:ode-2} we see that,
\beq
\left| \sum_{j=1}^N m_j (t) \right| \leq \sum_{j=1}^N |\uii_j (0)| =: X ( \eta ).
\eeq
From this, we see that for any $\eta$ and $\theta$ that
\beq
\del_{\eta}^2 W_{N, t} ( \theta , \eta ) \geq  - X ( \eta ).
\eeq
From Proposition \ref{prop:1b} we see that,
\beq
\ee[  \sup_{ \eta \in I } |X ( \eta ) |^2 ] \leq C N^2
\eeq
for any  compact interval $I  \subseteq (0, \infty)$.  This completes the proof of Lemma \ref{lem:2nd-der-basic}.  Lemma \ref{lem:2nd-ii} now follows from the above estimate and that we have shown that for all compact intervals $I$ we have,
\beq
\inf_{ ( \eta , \theta ) \in I^2 } \left( \del_\eta^2 W_{N, t} \right) ( \eta , \theta ) \geq - \sup_{ \eta \in I } |X ( \eta ) |.
\eeq
This completes the proofs of Lemmas \ref{lem:2nd-der-basic} and \ref{lem:2nd-ii}. \qed

\subsubsection{Mixed partials; proof of Proposition \ref{prop:mixed}} \label{sec:aux-2nd-mixed}

The entirety of this subsection is devoted to the proof of Proposition \ref{prop:mixed}. The first estimate follows from \eqref{eqn:1der-sign-2}. Define,
\beq
A_j (t) := \left( \del_{\eta} \del_{\theta} W_{j, t} ( \eta , \theta ) + c_0 \del_{\theta} W_{j, t} ( \eta , \theta )\del_{\eta} W_{j, t} ( \eta , \theta ) \right) \bigg\vert_{\eta = \theta},
\eeq
where $c_0$ is the constant in the first inequality of \eqref{eqn:deriv-assump}. It suffices to prove that $A_n (t) \geq 0$ for all $n$ and $t$. This will be proven by induction. 

In the remainder of the proof we suppress the arguments $(\eta, \theta)$ in all of the functions considered. We will always consider these functions evaluated on the diagonal $\eta = \theta$, but the proof applies equally well off the diagonal.

We recall here that,
\begin{align}
\del_t \del_{\theta} W_{j, t} &= - V'' (u_j (t) ) h_j (t)  \notag\\
\del_t \del_\eta W_{j, t} &= - V'' (u_j (t) ) k_j (t) 
\end{align}
and
\beq
\del_t \del_{\eta} \del_\theta W_{j, t} = - V''( u_j (t) ) \udi_j (t) - V''' (u_j (t) ) k_j (t) h_j (t)
\eeq
which follow from Propositions  \ref{prop:1st-der-system} and Proposition \ref{prop:2nd-der-system}, respectively. Therefore,
\begin{align}\label{eqn: mixed partials 3}
    \frac{ \mathrm{d}}{ \mathrm{d} t} A_n(t) &= \frac{\mathrm{d}}{\mathrm{d}t}\left[c_0 \frac{\partial}{\partial \theta}W_{n, t} \frac{\partial}{\partial \eta}W_{n, t} + \frac{\partial^2}{\partial \theta \partial \eta}W_{n, t} \right] \nonumber \\
    &=-V''(u_n(t))\left[ c_0 k_{n}(t) \frac{\partial}{\partial \theta} W_{n, t} +c_0 h_n(t) \frac{\partial}{\partial \eta} W_{n, t} + \frac{\partial^2}{\partial \theta \partial \eta}u_n (s)\right] \nonumber \\
    & - V^{(3)}(u_n(t) )h_n(t)  k_n (t).
\end{align}
We now start the induction.  When $n=1$, we have $k_1 = \del_{\eta} W_{1, t}$ and $\del_\theta \del_\eta u_1 = \del_\theta \del_\eta W_{1, t}$.  Therefore, when $n=1$, we obtain from  \eqref{eqn: mixed partials 3} that,
\begin{equation}
\frac{\mathrm{d}}{\mathrm{d}t} A_1(t) =-V''(u_1 (t))A_1 (t) -\left[c_0 V''(u_1 (t))+V^{(3)}(u_1 (t))\right]h_1 (t) k_1 (t)\label{eqn: mixed partials 4}
\end{equation}
By the assumption \eqref{eqn:deriv-assump} and that $h_1$, $k_1$ are both negative by \eqref{eqn:1der-sign-1}, the  the last term of \eqref{eqn: mixed partials 4} is positive, giving
\beq   \frac{ \mathrm{d}}{ \mathrm{d} t } A_1 (t) \geq - V''(u_1(t) ) A_1 (t).\eeq
Since $A_1(0)=0$, we see that 
\beq
A_1 (t)\geq 0 ,
\eeq
for all $ t \geq 0$. 
Next, for $n>1$, we have,
\begin{align}
& k_n (t) \frac{ \del}{ \del \theta} W_{n, t} + h_n \frac{ \del}{ \del \eta} W_{n, t} = \frac{\del}{\del \eta} W_{n, t} \frac{\del}{\del \theta} W_{n, t}  - \frac{\del}{\del \eta} W_{n-1, t} \frac{\del}{\del \theta} W_{n, t}   + h_n\left( k_n + \frac{ \del}{ \del \eta} W_{n-1, t} \right) \notag\\
=& \frac{\del}{\del \eta} W_{n, t} \frac{\del}{\del \theta} W_{n, t}  - \frac{\del}{\del \eta} W_{n-1, t} \left( h_n +  \frac{\del}{\del \theta} W_{n-1, t}\right)   + h_n\left( k_n + \frac{ \del}{ \del \eta} W_{n-1, t} \right) \notag\\
=& \frac{\del}{\del \theta} W_{n, t} \frac{\del}{\del \eta} W_{n, t}  - \frac{\del}{\del \theta} W_{n-1, t}  \frac{\del}{\del \eta} W_{n-1, t}  + h_n (t) k_n (t) ,
\end{align}
In the first equality we substituted  $k_n = \del_\eta W_{n, t} - \del_\eta W_{n-1, t}$ and $\del_\eta W_{n, t} = k_n + \del_\eta W_{n-1, t}$. In the second equality we substituted $\del_\theta W_{n, t} = h_n + \del_{\theta} W_{n-1, t}$. 
Therefore, using the above equality in  \eqref{eqn: mixed partials 3} as well as $\del_\theta \del_\eta u_n = \del_\theta \del_\eta (W_{n, t} - W_{n-1, t})$ we find,
\begin{align}
\frac{\mathrm{d}}{\mathrm{d}t}A_n (t)&=-V''(u_n^T (t))\left[A_n(t)-A_{n-1}(t)\right]-(c_0 V''(u_n^T (t))+V^{(3)}(u_n^T (t)))h_n^T (t) k_n^T (t) \nonumber\\
&= -V''(u_n^T (t))A_n (t) \nonumber\\
& \quad+V''(u_n^T (t))A_{n-1}(t)-\left[c_0 V''(u_n^T (t))+V^{(3)}(u_n^T (t))\right]h_n^T (t) k_n^T (t).\label{eqn: mixed partials 5}
\end{align}
As in the case case $n=1$, since $k_n$ and $h_n$ are nonpositive by \eqref{eqn:1der-sign-1} and by \eqref{eqn:deriv-assump}, we have that the last term on the last line is positive. Since $V''(x) \geq 0$ and by the induction assumption that $A_{n-1} \geq 0$  we see that in fact the entire last line of \eqref{eqn: mixed partials 5} is positive so that
\beq
    \frac{\mathrm{d}}{\mathrm{d} t } A_n (t) \geq - V'' (u_n (t) ) A_n (t).\eeq
Using the initial condition $A_n (0) = 0$, we conclude the proof. \qed

\subsubsection{Variance comparison; proof of Lemma \ref{lem:var-comp}} 
 \label{sec:aux-var-comp}
The entirety of this subsection is devoted to the proof of Lemma \ref{lem:var-comp}. Let $\lambda > \theta$.  We have,
\begin{align}
\Var ( W_{N, t} ( \theta , \theta ) ) &= N \psiV_1 ( \theta ) - t + 2 \ee[ \left( \del_\theta W_{N, t} \right) ( \theta , \theta ) ] \notag\\
&\leq  N \psiV_1 ( \theta ) - t +2 \ee[ \left( \del_\theta W_{N, t} \right) ( \lambda , \lambda ) ] \notag\\
&= N \psiV_1 ( \theta ) - N \psiV_1 ( \lambda ) + \Var (W_{N, t} ( \lambda , \lambda ) ).
\end{align}
In the first and third lines we used the first representation in \eqref{eqn:var-rep}. In the second line we used that the function $(\eta , \theta ) \to \left( \del_\theta W_{N, t} \right) ( \eta, \theta)$ is increasing in both of its arguments by Lemma \ref{lem:2nd-der-basic}. Therefore,
\beq
\Var (W_{N, t} ( \theta, \theta ) ) - \Var ( W_{N, t} ( \lambda , \lambda ) ) \leq N ( \psiV_1 ( \theta ) - \psiV_1 ( \lambda ) ).
\eeq
For the lower bound using instead the second representation in \eqref{eqn:var-rep} we have,
\begin{align}
\Var ( W_{N, t} ( \theta , \theta ) ) &= t - N \psiV_1 ( \theta ) - 2 \ee[ \left( \del_\eta W_{N, t} \right) ( \theta, \theta ) ] \notag\\
&\geq t - N \psiV_1 ( \theta ) -2 \ee[ \left( \del_\eta W_{N, t} \right) ( \lambda , \lambda ) ] - C N ( \lambda - \theta )  \notag\\
&= N ( \psiV_1 ( \lambda ) - \psiV_1 ( \theta ) ) - C N ( \lambda - \theta ) - \Var ( W_{N, t} ( \lambda , \lambda ) ).
\end{align}
In the second line we used the second estimate of Lemma \ref{lem:2nd-der-basic} which results in the extra $CN ( \lambda - \theta)$ term compared to the upper bound. The claim now follows. \qed

\subsection{Proof of Corollary \ref{main-cor}} \label{sec:main-cor}

The entirety of this subsection is devoted to the proof of Corollary \ref{main-cor}. Fix $\theta >0$ and let $t_0 := N \psiV_1 ( \theta)$. If $t \geq t_0$ we may write,
\beq
W_{N, t}^\theta - \ee[ W_{N, t}^\theta ] = \left( \sum_{j=1}^N u_j (t , \theta , \theta ) - (B_0 (t) - B_0 (t-t_0) ) + \theta t_0  - \ee[ W_{N, t_0}^\theta] \right) + B_0 (t-t_0).
\eeq
The first term on the RHS has the same distribution as $W_{N, t_0}^\theta - \ee[W_{N, t_0}^\theta ]$ and so has variance $\O (N^{2/3} )$ by Theorem \ref{thm: upper-bound}. On the other hand, $B_0 (t - t_0)$ is a Gaussian random variable of variance $t-t_0 \gg N^{2/3}$. 

If $t \leq t_0$ we note that $W_{N, t}^\theta$ has the same distribution as the random variable,
\beq
Z := \left( \sum_{j=1}^N u_j ( t + t_0, \theta , \theta ) - (B_0 (t+t_0) - B (t_0) ) + \theta t \right).
\eeq
We then decompose,
\beq
Z - \ee[Z] = \left( \sum_{j=1}^N u_j ( t + t_0, \theta , \theta )  - (B_0 (t+t_0) - B (t) ) + \theta t_0 - \ee[ W_{N, t_0}^\theta ] \right) + B (t_0) - B(t).
\eeq
The first term on the RHS has the same distribution as  $W_{N, t_0}^\theta - \ee[W_{N, t_0}^\theta ]$ and so has variance $\O (N^{2/3} )$ by Theorem \ref{thm: upper-bound}, and the second term is a Gaussian with variance $|t_0 -t|$. The claim follows. \qed
\section{Pseudo-Gibbs measures and concentration}
 \label{sec:psg}

This section is devoted to the introduction of the pseudo-Gibbs measures and proving tail estimates of the ``first jump'' time on the scale $N^{2/3}$ with respect to the annealed measure. In the O'Connell-Yor case, these estimates results can be interpreted as an analog of Proposition 3.3 of \cite{EJS} for exponential last passage percolation. 

Before introducing the pseudo-Gibbs measure, we will first give an exact calculation of a generating function in Section \ref{sec:generating}. Then, we will introduce the pseudo-Gibbs measures in Section \ref{sec:psg-2} and as well as give them a dynamical interpretation. Finally, in Section \ref{sec:psg-conc} we prove the advertised tail estimates.

\subsection{The Rains-EJS generating function} \label{sec:generating}
In this section we derive the analog in our setting of a generating function considered by Rains
\cite{rains} and Emrah-Janjigian-Seppalainen \cite{EJS}  for Last Passage Percolation with exponential weights on $\mathbb{Z}^2$ (the discrete, ``zero temperature'' version of the O'Connell-Yor polymer).

\bep \label{prop:generat}
Let $W_{N, t} ( \eta, \theta)$ be as in \eqref{eqn:W-def} and define,
\beq
\varphi ( \theta ) := N \psiV_{-1} ( \theta ) - \frac{1}{2} \theta^2 t = N \log (Z( \theta)) - \frac{1}{2} \theta^2 t.
\eeq
Then,
\beq
\ee\left[ \exp \left(  ( \eta - \theta ) W_{N, t} ( \eta , \theta ) \right) \right] = \exp \left( \varphi ( \theta ) - \varphi ( \eta ) \right).
\eeq
\eep
\proof We write,
\beq
\ee\left[ \exp \left(  ( \eta - \theta ) W_{N, t} ( \eta , \theta ) \right) \right] = \ee \left[ \exp \left\{  ( \eta  - \theta) (W_{N, t} ( \eta , \theta ) - W_{0, t} ( \eta , \theta)) \right\} \exp \left\{ ( \eta - \theta ) W_{0, t} ( \eta , \theta ) \right\} \right]
\eeq
where,
\beq
W_{0, t} ( \eta , \theta ) := - B_0 (t) + \theta t.
\eeq
Since,
\beq
W_{N, t} ( \eta , \theta ) - W_{0, t} ( \eta , \theta) = \sum_{j=1}^N u_j ( t , \eta , \theta)
\eeq
and
\begin{align}
( \eta - \theta) W_{0, t} ( \eta , \theta ) &= ( \theta - \eta) B_0 (t) - \frac{1}{2} ( \theta  - \eta )^2 t \notag\\
&+ \frac{1}{2} ( \eta^2 - \theta^2 ) t,
\end{align}
we may apply the Cameron-Martin theorem to find,
\begin{align}
\ee\left[ \exp \left(  ( \eta - \theta ) W_{N, t} ( \eta , \theta ) \right) \right] &= \ee \left[ \exp \left( ( \eta-  \theta ) \sum_{j=1}^N u_j (t , \eta , \theta ) \right) \e^{  ( \theta - \eta) B_0 (t) - \frac{1}{2} ( \theta - \eta )^2 t } \right] \e^{ \frac{1}{2} ( \eta^2 - \theta^2 ) t } \notag\\
&= \ee \left[ \exp \left( ( \eta - \theta ) \sum_{j=1}^N u_j ( t , \eta , \eta ) \right) \right] \e^{ \frac{1}{2} ( \eta^2 - \theta^2 ) t } \notag\\
&= \frac{ ( Z( \theta )^N}{ Z ( \eta )^N} \e^{ \frac{1}{2} ( \eta^2 - \theta^2 ) t } ,
\end{align}
where in the last equality we used that the $u_j (t, \eta, \eta)$ are distributed as iid $\nu_\eta$, this being the invariant measure. 
The claim now follows from the definition of $\varphi$. \qed

\subsection{Pseudo-Gibbs measure}
\label{sec:psg-2}
For any $(N, t, \eta, \theta)$ let us define the measure $\Eet_{N, t}$ on $[0, t]$ by its action on, say, bounded measureable $F: [0, t] \to \rr$ via,
\beq \label{eqn:psg-def}
\Eet_{N, t} [F] := \int_{0 < s_0 < \dots < s_{N-1} < t } \exp \left( - \sum_{j=0}^{N-1} \int_{s_j}^{s_{j+1} } V'' (u_j ( u) ) \d u \right) F(s_1)  \prod_{j=0}^{N-1} V'' (u_{j+1} (s_j ) ) \d s,
\eeq
where we use the convention $s_{N} = t$. 
 We will only need a few specific choices of (deterministic) $F$ in this paper which are all continuous or piecewise continuous and bounded.  For $f : \rr_+ \to \rr$ piecewise continuous, consider the solution $\hf_j (t, \eta , \theta)$ to the following system of ODEs with initial condition $\hf_j (0) = 0$ for all $j$,
 \begin{align} \label{eqn:hf-def}
 \del_t \hf_1 (t) &= - V''(u_1 (t) ) \hf_1 (t) - f(t) \notag\\
 \del_t \hf_j (t) &= - V'' (u_j (t) ) \hf_j (t) + V'' (u_{j-1} (t) ) \hf_{j-1} (t) , \qquad j \geq 2.
 \end{align}
 We abbreviated $u_j (t) = u_j ( t , \eta , \theta)$ and $\hf_j (t ) = \hf_j (t , \eta , \theta)$.
 
 The connection between these two objects is as follows.
 \bep \label{prop:psg}
 Let $F : [0, t] \to \rr $ be of the form,
 \beq
 F(s) = \int_0^s f(u) \d u
 \eeq
 for piecewise continuous $f$. Then, 
 \beq \label{eqn:psg-eq-1}
 \Eet_{N, t} [F] = \sum_{j=1}^N \hf_j ( t , \eta , \theta ) + F(t).
 \eeq
 Moreover, for all $F$,
 \beq \label{eqn:l-inf-bd}
  | \Eet_{N, t} [ F ] | \leq \| F \|_\infty
 \eeq
 and $\Eet_{N, t}$ defines a positive measure that assigns to $[0, t]$ weight less than or equal to $1$.  If $f$ is nonnegative, then
 \beq
 \hf_j (t) \leq 0
 \eeq
 for all $j$ and $t$.
 \eep
 \proof In the proof we suppress explicit dependence on $(\eta , \theta)$ of the various quantities involved where convenient. By definition we have,
 \begin{align} \label{eqn:psg-proof-1}
 \Eet_{1, t} [F]&= \int_0^t \e^{ - \int_{s_0}^t V'' (u_1 (u) ) \d u } V'' (u_1 (s_0) ) F (s_0) \d s_0 , \notag\\
 \Eet_{n, t}[F] &= \int_0^t \e^{ - \int_{s_{n-1}}^t V'' (u_n (u) ) \d u } V'' (u_n (s_{n-1} ) ) \Eet_{n-1, s_{n-1}} [F] \d s_{n-1}, \qquad n \geq 2 
 \end{align}
 Denote temporarily,
 \beq
 \tilE_{n, t} := \sum_{j=1}^n \hf_j (t) + F(t).
 \eeq
 Then,
 \begin{align}
 \del_t \tilE_{n, t} = - V'' ( u_n (t) ) \tilE_{n, t} + V'' ( u_n (t) ) \tilE_{n-1, t}
 \end{align}
 where $\tilE_{0, t} := F(t)$. The solution to this system of ODEs is given by \eqref{eqn:psg-proof-1}, proving the desired equality. Temporarily denote now $\hatE_{n, t} := \Eet_{n, t} [1]$. We prove by induction that $\hatE_{n, t} \leq 1$. Clearly by \eqref{eqn:psg-proof-1} all the $\hatE_{N, t}$ are nonnegative since $V$ is convex. We have,
 \beq
 \hatE_{1, t} = \int_0^t \e^{ - \int_{s_0}^t V'' (u_1 (u) ) \d u } V'' (u_1 (s_0) ) \d s_0 = 1 - \e^{ - \int_0^t V'' (u_1 (u) ) \d u } \leq 1.
 \eeq 
 Assuming that $\hatE_{n-1, t} \leq 1 $ have,
 \begin{align}
 \hatE_{n, t} &= \int_0^t \e^{ - \int_{s_{n-1}}^t V'' (u_n (u) ) \d u } V'' (u_n (s_{n-1} ) ) \hatE_{n-1, t} \d s_{n} \notag\\ 
 &\leq \int_0^t \e^{ - \int_{s_{n-1}}^t V'' (u_n (u) ) \d u } V'' (u_n (s_{n-1} ) )  \d s_{n-1} \notag\\
 &= 1 - \e^{ - \int_0^t V'' (u_n (u ) ) \d u } \leq 1
 \end{align}
 which completes the proof that the $\hatE_{n, t} \leq 1$ for all $n$ and $t$.  Finally, the nonpositivity of the $\hf_j (t)$ in the case of nonnegative $f$ follows from Lemma \ref{lem:ode-1}. \qed
 
\remark The functions $\hf_j (t)$ can be obtained by adding the parameter $-\delta f$ to the RHS of the equation for $u_1 (t) $ in \eqref{eqn: system}, differentiating with respect to $\delta$ and setting $\delta = 0$. In the language of Malliavin derivatives discussed above in the context of the proof of Lemma \ref{lem:ibp}, the $\hf_j$ are the Malliavin derivatives of the $u_j$ obtained by perturbing the Brownian motion $B_0$ in the direction $- \int_0^\cdot f (s) \d s.$ For example, when $f = 1$, one obtains that $\hf_j = \del_\theta u_j $. \qed
 
 \subsection{Upper tail bound with respect to Pseudo-Gibbs measure} \label{sec:psg-conc}

 \subsubsection{Preliminaries}
 
 In order to prove our upper tail estimate, we will need to change some of the parameters in the measure $\Eet_{N, t}$ in order to apply Proposition \ref{prop:generat}. The following two lemmas establish the required monotonicity properties which are then applied to the observable of interest in Corollary \ref{cor:psg-mono}. 
 
 \bel
 Let $F$ be of the form,
 \beq
 F(s) = \int_0^s f(u) \d u
 \eeq
 for piecewise continuous nonnegative $f$. Then, the function,
 \beq
 \theta \to \Eet_{N, t} [F]
 \eeq
 is differentiable in $\theta$ and nondecreasing.
 \eel
 \proof Differentiability of $\Eet_{N,t} [F]$ follows easily from its definition as an iterated integral and the fact that $V$ is smooth and the $u_j (t, \eta, \theta)$ are differentiable. 
 
 We now turn to proving the monotonicity. We temporarily denote the derivatives by $v_j (t) := \del_\theta \Eet_{N, t} [F]$ and denote $u_j (t) = u_j ( t , \theta , \eta )$. They satisfy,
 \begin{align}
 \del_t v_j (t) = - V''(u_j (t) ) v_j (t) + V'' (u_j (t) ) v_{j-1} (t) - V''' (u_j (t) ) ( \hf_j (t) ) ( \del_\theta u_j (t) )
 \end{align}
 with the convention that $v_{0} (t) = 0$.  By \eqref{eqn:1der-sign-1} and Proposition \ref{prop:psg} we have,
 \beq
 - V''' (u_j (t) ) ( \hf_j (t) ) ( \del_\theta u_j (t) ) \geq 0
 \eeq
 for all $t$. From Lemma \ref{lem:ode-4} we have that $v_j (t) \geq 0$ for all $t$. The claim follows. \qed
 
 \bel\label{lem: pseudo-monotone}
 Let $V$ be of O'Connell-Yor type and let $0 < a \leq c_0$ where $c_0$ is the constant from \eqref{eqn:deriv-assump}. Let $F$ be of the form,
 \beq
F(s) = \int_0^s f(u) \d u 
 \eeq
 for nonnegative piecewise continuous $f$. 
  Then, the function
 \beq
 \eta \to \e^{ a W_{N, t} ( \eta , \theta ) } \Eet_{N, t} [F]
 \eeq
 is increasing.
 \eel
 \remark In the O'Connell-Yor case, one can take $a=1$ and in fact the quantity under consideration is independent of $\eta$. Indeed, in this case the prefactor cancels the appearance of partition function in \eqref{eqn:oy-gibbs} and all that remains is an integral restricted to $s_0 \geq 0$ which does not depend on $\eta$. \qed
 
 \proof By differentiation (differentiability follows as in the previous lemma), it suffices to prove that
 \beq
 a  \left( \del_\eta W_{N, t} \right) ( \eta , \theta )  \Eet_{N, t} [F] + \del_\eta \Eet_{N, t} [F] \geq 0.
 \eeq
The argument will be similar to the proof of Proposition \ref{prop:mixed}. We denote the quantity on the right in the above display by $A_N (t)$ and suppress dependence of other quantities in the proof on the parameters $(\eta , \theta)$ where convenient. Recall the notation $k_j (t) = \del_\eta u_j ( t , \eta , \theta)$. We have,
\begin{align}
\del_t \del_\eta \Eet_{n, t} [F] &= -V'' (u_n (t) ) \del_\eta \hf_n (t) - V'''(u_n) \hf_n (t) k_n  (t) \notag\\
&= - V'' (u_n (t) ) \del_\eta \Eet_{n, t} [F]  + V'' (u_n (t) ) \del_\eta \Eet_{n-1,t} [F] - V''' (u_n (t) ) \hf_n (t) k_n (t) ,
\end{align}
where we use the convention $\Eet_{0, t} [F] := F(t)$ (and so $\del_\eta \Eet_{0, t} [F]  = 0$).  Similarly,
\begin{align}
\del_t \left\{ \left(  \del_\eta W_{n, t} \right) \Eet_{n, t} [F] \right\} =& - V''(u_n (t) ) ( \del_\eta W_{n, t}  ) \hf_n (t) - V'' (u_n (t) ) k_n(t) \Eet_{n, t} [F] \notag\\
=& - V'' (u_n ) ( \del_\eta W_{n, t}  )\Eet_{n, t}[F]  \notag\\
+&  V'' (u_n) ( \del_\eta W_{n-1, t}  ) \Eet_{n-1, t}[F] - \hf_n (t) V'' (u_n) k_n (t) ,
\end{align}
where we use the convention $W_{0 , t} ( \eta , \theta ) = \theta t$. 
Therefore,
\begin{align}
\del_t A_n(t) &= - V'' (u_n (t) ) A_n (t) + V'' (u_n (t) ) A_{n-1} (t)  \notag\\
&+ (\hf_n(t) ) k_n(t) ( - V''' (u_n (t) ) - a V'' (u_n (t) ) ).
\end{align}
where $A_0 (t) = 0$.  By the assumption on $a$ and the fact that $k_n (t) \leq 0$ by \eqref{eqn:1der-sign-1} and $\hf_n (t) \leq 0$ by Proposition \ref{prop:psg} we have that the last line above is a positive function. The nonnegativity of the $A_n(t)$ now follows from Lemma \ref{lem:ode-4}. \qed 

\bec \label{cor:psg-mono}
Let $ 0 \leq t_0 \leq t$ and let,
\beq
F(s) = \1_{ \{ s \geq t_0 \} }.
\eeq
Let $0 < a \leq c_0$ where $c_0$ is from \eqref{eqn:deriv-assump}. 
Then, the functions
\beq
\theta \to \Eet_{N, t} [F]
\eeq
and
\beq
\eta \to \e^{ a W_{N, t} ( \eta  , \theta ) } \Eet_{N, t} [F]
\eeq
are nondecreasing.
\eec
\proof Let 
\beq
f_n (u) := n \1_{ \{ t_0 \leq u \leq t_0 + n^{-1} \} } 
\eeq
and
\beq
F_n(s) := \int_0^s f_n (u) \d u.
\eeq
From the definition \eqref{eqn:psg-def} we see that
\beq
\Eet_{N, t} [F] = \lim_{n \to \infty} \Eet_{N, t} [ F_n ].
\eeq
The previous two lemmas apply to $F_n$ and so the claim follows. \qed

\subsubsection{Tail estimates}

We now turn to the proof of the moderate deviation estimates of the first jump time with respect to the pseudo-Gibbs measure in the equilibrium case $\eta = \theta$. We first derive an estimate for the probability that the first jump time is positive, and then use stationarity to deduce a general tail estimate from the first case. 
\bep \label{prop:conc-1}
Let $\theta_0 >0$ be such that,
\beq
\psiV_{2} ( \theta_0 ) < 0.
\eeq
Let $\eps_0 >0$ be,
\beq
\eps_0 := \frac{1}{2} \left( c_0 \wedge \theta_0 \right),
\eeq
where $c_0$ is the constant from Assumption \ref{eqn:deriv-assump}. 
There is a constant $C_0 >0$ that depends only on
\beq
\sup_{ |\theta - \theta_0 | \leq \eps_0} | \psiV_3 (\theta ) |
\eeq
so that for $t = N \psiV_1 ( \theta_0)$ we have for all $\theta$ satisfying $ 0 < \theta_0 - \theta < \eps_0$ that
\beq
\ee \left[ E^{(\theta, \theta)}_{N, t} [ \1_{\{ s_0 > 0 \}} ] \right] \leq \exp\left( N \frac{\psiV_2 (\theta_0)}{16} (\theta_0 - \theta)^3 + N C_0 (\theta_0 - \theta)^4 \right)
\eeq
\eep
\proof Let $4 a = \theta_0 - \theta > 0$. Define also,
\beq
\lambda := \theta + 2a .
\eeq
By Corollary \ref{cor:psg-mono} we have,
\begin{align}
\Ett_{N, t} [\1_{ \{ s > 0 \} } ] &\leq E^{( \theta, \lambda )}_{N, t} \left[ \1_{ \{ s > 0 \} } \right] \notag\\
&\leq \e^{a ( W_{N, t} ( \theta_0 , \lambda ) - W_{N, t} ( \theta , \lambda ) ) } E^{( \theta_0, \lambda ) }_{N, t}  \left[\1_{ \{ s > 0 \} } \right] \notag\\
&\leq \e^{a ( W_{N, t} ( \theta_0 , \lambda ) - W_{N, t} ( \theta , \lambda ) ) } ,
\end{align}
 where we used \eqref{eqn:l-inf-bd} in the last line. By Cauchy-Schwarz and Proposition \ref{prop:generat},
 \begin{align}
 \ee \left[ \e^{a ( W_{N, t} ( \theta_0 , \lambda ) - W_{N, t} ( \theta , \lambda ) ) }  \right]^2 &\leq \ee \left[ \e^{ 2a  W_{N, t} ( \theta_0 , \lambda ) } \right]\left[ \e^{ -2a  W_{N, t} ( \theta , \lambda ) } \right] \notag\\
 &= \ee \left[ \e^{ (\theta_0 -\lambda)  W_{N, t} ( \theta_0 , \lambda ) } \right]\left[ \e^{ ( \theta - \lambda)  W_{N, t} ( \theta , \lambda ) } \right] \notag\\
 &= \exp \left( N (2 \psiV_{-1} ( \lambda )  - \psiV_{-1} ( \theta_0 )  - \psiV_{-1} ( \theta )  ) + \frac{t}{2} (   \theta_0^2 + \theta^2 - 2 \lambda^2 ) \right).
 \end{align}
 A Taylor expansion at $\theta_0$ gives
 \begin{align}
 2 \psiV_{-1} ( \lambda )  - \psiV_{-1} ( \theta_0 )  - \psiV_{-1} ( \theta )  ) = - 4 \psiV_1 ( \theta_0 ) a^2 + 8 \psiV_2 ( \theta_0 ) a^3 + \O ( a^4 ).
 \end{align}
On the other hand,
\beq
 \theta_0^2 + \theta^2 - 2 \lambda^2 = 8 a^2.
\eeq
We conclude using $t = N \psiV_1 ( \theta_0)$. \qed

\bel \label{lem:stat}
Let $0 < \tau < t$. We have the equality in distribution,
\beq
\Ett_{N, t} [ \1_{ \{ s_1 > \tau \} } ] \stackrel{d}{=} \Ett_{N, t - \tau } [ \1_{ \{ s_1 > 0 \} } ].
\eeq
\eel
\proof By the definition \eqref{eqn:psg-def} we have,
\begin{align}
\Ett_{N, t} [ \1_{ \{ s > \tau \} } ] &= \int_{0 < s_0 < \dots < s_{N-1} < t } \exp \left( - \sum_{j=0}^{N-1} \int_{s_j}^{s_{j+1} } V'' (u_j ( u) ) \d u \right) \1_{ \{ s_0> \tau \} }  \prod_{j=1}^N V'' (u_j (s_{j-1} ) ) \d s \notag\\
&=  \int_{\tau < s_0 < \dots < s_{N-1} < t } \exp \left( - \sum_{j=0}^{N-1} \int_{s_j}^{s_{j+1} } V'' (u_j ( u) ) \d u \right) \1_{ \{ s_0 > \tau \} }  \prod_{j=1}^N V'' (u_j (s_{j-1} ) ) \d s \notag \notag\\
=  \int_{0 < s_0 < \dots < s_{N-1} < t-\tau } &\exp \left( - \sum_{j=0}^{N-1} \int_{s_j}^{s_{j+1} } V'' (u_j ( u+\tau) ) \d u \right) \1_{ \{ s_0 > 0 \} }  \prod_{j=1}^N V'' (u_j (s_{j-1} +\tau) ) \d s
\end{align}
Now, $\{ u_j ( s+\tau ) \}_{1 \leq j \leq N , 0 \leq s \leq t}$ has the same distribution as $\{ u_j ( s ) \}_{ 1 \leq j \leq N , 0 \leq s \leq t }$ since the solution to \eqref{eqn: system} is a Markov process and we are assuming that the distribution of $\{ u_j (0) \}_{j=1}^N$ is invariant.  Therefore, the last line has the same distribution as,
\beq
\int_{0 < s_0 < \dots < s_{N} < t-\tau } \exp \left( - \sum_{j=0}^{N-1} \int_{s_j}^{s_{j+1} } V'' (u_j ( u) ) \d u \right) \1_{ \{ s_0 > 0 \} }  \prod_{j=1}^N V'' (u_j (s_{j-1} ) ) \d s = \Ett_{N, t - \tau } \left[ \1_{ \{ s_0 > 0 \} } \right].
\eeq 
This yields the claim. \qed

The following is the main technical result of this section, and gives an upper tail estimate for the first jump time under the pseudo-Gibbs measure. We remark here that the upper bound, Theorem \ref{thm: upper-bound} also follows from the following result, given the variance representation on the first line of \eqref{eqn:var-rep}. However, this approach requires the additional assumption $\psiV_2 ( \theta ) <0$.

\bep \label{prop:conc-2}
Let $I_0$ be a compact interval supported in $(0, \infty)$ on which $\psiV_2 ( \theta ) < 0$ for all $ \theta \in I_0$. Then there is a $\delta >0$ and $C_0 >0$ so that for all pairs $(t , \theta)$ satisfying $\theta \in I_0$ and
\beq
|t-N \psiV_1 ( \theta ) | \leq \delta N
\eeq
we have for all $0 \leq w \leq \delta N$ that,
\beq
\ee\left[ \Ett_{N, t} [ \1_{ \{ s_0 > e (\theta, t) + w \} } ] \right] \leq \exp \left( - \frac{ w^3}{16 N^2  \psiV_2 ( \theta)^2 } + C_0 N^{-3} w^4 \right)
\eeq
where,
\beq
e ( \theta , t ) := t - N \psiV_1 ( \theta ).
\eeq
as long as $e ( \theta ,t ) + w \geq 0$.
\eep
\proof Let $\delta_1 >0$ be such that $\psiV_2 ( \theta ) < 0$ for all $ \theta \in I_1 := I_0 + [-\delta_1, \delta_1 ]$ and this interval remains strictly contained in $(0, \infty)$. Let us take $\delta >0$ so small that for all $0 \leq w_0 \leq 10 \delta$ and all $\theta \in I_0$, the equation
\beq
\psiV_1 ( \theta ) - \psiV_1 ( v  ) = w_0.
\eeq
 has a (necessarily unique) solution $ v \in I_1$ admitting the expansion,
 \beq
 (\theta - v) = - \frac{ w_0}{ \psiV_2 ( \theta ) }  + \O ( w_0^2 ) ,
 \eeq
 and that
 \beq
 | \theta - v | \leq \frac{1}{10} \left(  c_0 \wedge \left( \inf I_1 \right) \right)
 \eeq
 where $c_0$ is the constant in \eqref{eqn:deriv-assump}. 
Now, given $\theta \in I_0$ and $t$ and $w$ satisfying $|t - N \psiV_1 ( \theta ) | \leq \delta N$ and $0 \leq w \leq \delta N$ let $\nu \in I_1$ satisfy,
\beq
\psiV_1 ( \theta ) - \psiV_1 ( \nu ) = \frac{w}{N} .
\eeq 
The claim is vacuous if $e(\theta, t) + w \geq t$ (the LHS of the desired inequality is then $0$), so assume that $0 \leq e(\theta, t) + w \leq t$ (alternatively we could also reduce the value of $\delta >0$ to enforce this). 
By Lemma \ref{lem:stat} we have,
\beq
\ee \left[ \Ett_{N, t} [ \1_{ \{ s_0 > e ( \theta , t) + w \} } ] \right] = \ee\left[ \Ett_{N, t - e(\theta, t) - w } [ \1_{ s_0 > 0 } ] \right]
\eeq
Now by definition,
\beq
t - e ( \theta ,t ) - w = \psiV_1 ( \nu ).
\eeq
We may apply Proposition \ref{prop:conc-1} with the $\theta_0$ there equal to the $\nu$ here and the $\theta$ there equal to the $\theta$ here. Therefore,
\begin{align}
\ee[ \Ett_{N, t - e(\theta, t) - w } [ \1_{ s_0 > 0 } ] ] &\leq \exp \left( N \frac{ \psiV_2 ( \nu )}{16} ( \nu - \theta )^3 + N C ( \nu - \theta )^4 \right) \notag\\
& \leq \exp \left( - \frac{ w^3}{16 N^2  \psiV_2 ( \theta)^2 } + C_0 N^{-3} w^4 \right)
\end{align}
and the claim follows. \qed

Finally, we derive a corollary that is suited for application in the next section.

\bec \label{cor:conc}
Let $\theta_0 >0$ be such that $\psiV_2 ( \theta_0) < 0$.  Let
\beq
t_0 := N \psiV_1 ( \theta_0), \qquad \lambda := \theta_0 + N^{-1/3}.
\eeq
There are $C>0$ and $c>0$ so that for all $Y>0$ we have,
\beq
\ee[ E^{ ( \lambda, \lambda ) }_{N, t_0} [ (s_0 - Y N^{2/3} )_+ ] ] \leq C \left(  N^{2/3} \e^{ - c (Y-C)_+^3} +  \e^{ - c N } \right).
\eeq
\eec
\proof Note that $e ( \lambda , t) = \O ( N^{2/3} )$. By Proposition \ref{prop:conc-2} there is an $\eps_1 >0$ so that,
\beq
\ee[ E^{ ( \lambda \lambda ) }_{N, t_0} [ \1_{ \{ s_0 > \eps_1 N/2 \} } ] ] \leq C \e^{ - c \eps_1^3 N }
\eeq
and for all $0 < y < \eps_1 N$ we have,
\beq \label{eqn:conc-2-a1}
\ee[ E^{ ( \lambda, \lambda ) }_{N, t_0} [ \1_{ \{ s_0 > y \} } ] ] \leq C \e^{ - c N^{-2} (y-C N^{2/3} )^3_+ }.
\eeq
The measure defined by $E^{ ( \lambda, \lambda ) }_{N, t_0}$ is supported in the interval $[0, t_0]$ and so we have almost-everywhere with respect to to the pseudo-Gibbs measure that,
\beq
(s_0-YN^{2/3} )_+ \leq t_0 \leq C N.
\eeq
Therefore,
\beq
\ee[ E^{ ( \lambda, \lambda ) }_{N, t_0} [ (s_0 - Y N^{2/3} )_+ \1_{  \{s_0 > \eps_1 N /2 \} } ] ] \leq  C N \ee[ E^{ ( \lambda \lambda ) }_{N, t_0} [  \1_{  \{s_1 > \eps_1 N /2 \} } ] ] \leq C   \e^{ - c N}
\eeq
for some $c, C>0$.  On the other hand,
\begin{align}
\ee[ E^{ ( \lambda, \lambda ) }_{N, t_0} [ (s_0 - Y N^{2/3} )_+  \1_{ \{ s_0 < \eps_1 N /2 \} }]] &= \int_0^\infty \ee[ E^{ ( \lambda, \lambda ) }_{N, t_0} [ \1_{ \{ Y N^{2/3} + u < s_0 < \eps_1 N/2 \} }] ] \d u \notag\\
&=  N^{2/3}\int_0^\infty \ee[ E^{ ( \lambda, \lambda ) }_{N, t_0} [ \1_{ \{ (Y+u)N^{2/3} < s_0 < \eps_1 N/2 \} }] ] \d u \notag\\
&\leq N^{2/3} \int_0^\infty C \e^{ - c (Y+u - C)_+^3 } \d u \notag\\
&\leq  CN^{2/3} \e^{ - c (Y - C)_+^3 }
\end{align}
In the first inequality we used \eqref{eqn:conc-2-a1}. 
This yields the claim. \qed

\section{Lower bound} \label{sec:lower}

In this section we give the proof of our lower bound for the variance, Theorem \ref{thm:lower}. We first introduce a three-parameter version of the height function $W_{N, t} ( \eta , \theta)$ denoted by $\tilW_{N, t} ( \eta , \theta_1 , \theta_2)$ below. The role of $\eta$ as the initial data parameter will remain the same. We separate the drift or driving parameter into two regimes, when $s_1 \lesssim N^{2/3}$ and when $s_1 \gg N^{2/3}$. The first is the regime of ``strong dependence'' and the second of ``weak dependence.'' The weak dependence of the three-parameter $\tilW_{N, t} ( \eta , \theta_1, \theta_2)$ on its last argument is quantified by the concentration estimates of Corollary \ref{cor:conc}. 

Let us remark here that all questions of well-posedness and differentiability of the three-parameter system are almost identical to the two-parameter one. Further discussion is deferred to Appendix \ref{a:three}.

\subsection{Three-parameter height function}
\label{sec:three-param}
We introduce a three-parameter height function  $\tilW_{N, t} ( \eta, \theta_1, \theta_2)$ as follows. Fix a $ Y>0$, This parameter will later be chosen to be large depending on some other constants that appear in the proof of Proposition \ref{prop:lower} below. Then, define $\tilu_j ( t , \eta , \theta_1 , \theta_2 )$ as the solution to, 
\begin{align} \label{eqn:3-sys}
\d \tilu_1 ( t , \eta , \theta_1, \theta_2 ) &= - V'(\tilu_1 ( t , \eta , \theta_1, \theta_2 )) \d t+ \d B_0 + \d B_1 - \theta_1 \1_{  \{ t \in [0, Y N^{2/3} ] \} } \d t - \theta_2 \1_{\{  t> Y N^{2/3} \} } \d t \notag\\
\d \tilu_j ( t , \eta , \theta_1 , \theta_2 ) &= -V' ( \tilu_j ( t , \eta , \theta_1 , \theta_2 ) ) + V' ( \tilu_{j-1} ( t , \eta , \theta_1 , \theta_2 ) ) - \d B_{j-1} + \d B_j ,
\end{align}
with initial data equal to $\tilu_j ( 0 , \eta , \theta_1 ,\theta_2 ) = H_\eta ( q_i )$.  That is, we set the drift parameter equal to $\theta_1$ for time up to $Y N^{2/3}$ and then $\theta_2$ for all later times $t$. Then, $\tilW_{N, t}$ is defined by,
\begin{align} \label{eqn:3-height}
\tilW_{N, t} ( \eta , \theta_1, \theta_2 ) &:= \sum_{j=1}^N \tilu_j ( t , \eta , \theta_1 , \theta_2 ) - B_0 (t) \notag\\
+& \int_0^t \theta_1 \1_{  \{ s \in [0, Y N^{2/3} ] \} } \d t + \theta_2 \1_{\{  s> Y N^{2/3} \} } \d s
\end{align}
The point of this definition is that, due to the concentration estimates of the previous section, the height function depends strongly only on $\theta_1$ and only weakly on $\theta_2$ for $Y$ large enough. 

The next  lemma establishes some monotonicity properties similar to the case of the two-parameter height function. 
\bel \label{lem:three-param-1}
For any $\eta, \theta_1, \theta_2 >0$ we have,
\beq
\left( \del_\eta \del_{\theta_2} \tilW_{N, t} \right) ( \eta , \theta_1 , \theta_2 ) \geq 0 , \qquad  \left(\del_{\theta_2}^2  \tilW_{N, t} \right) ( \eta , \theta_1 , \theta_2 ) \geq 0 ,
\eeq
for all $N$ and $t \geq 0$.
\eel
\proof For notational convenience we drop dependence of the various quantities on the arguments $(\eta, \theta_1, \theta_2)$. Define $\tilh_j (t) := \del_{\theta_2} \tilu_j (t)$ and $\tilk_j (t) := \del_\eta \tilu_j (t)$. The $\tilh_j$ obey,
\begin{align} \label{eqn:tilh-system}
\del_t \tilh_1 (t) &= - V'' (\tilu_1 (t) ) \tilh_1 (t) - \1_{ \{ t \geq Y N^{2/3} \} } \notag\\
\del_t \tilh_j (t) &= - V'' ( \tilu_j (t) ) \tilh_j (t) + V'' (\tilu_{j-1} (t) ) \tilh_{j-1} (t)
\end{align}
with initial data $\tilh_j (0) = 0$. By Lemma \ref{lem:ode-1} we have $\tilh_j (t) \leq 0$ for all $t$. For the $\tilk_j$ we have that they obey,
\begin{align}
\del_t \tilk_1 (t) &= - V'' (\tilu_1 (t) ) \tilk_1 (t)  \notag\\
\del_t \tilk_j (t) &= - V'' ( \tilu_j (t) ) \tilk_j (t) + V'' (\tilu_{j-1} (t) ) \tilk_{j-1} (t)
\end{align}
with initial data $\tilk_j (0) \leq 0$. By Lemma \ref{lem:ode-2} it follows that $\tilk_j (t) \leq 0$ for all $t$.  Then, with $F_j (t) := \del^2 \tilW_{N, t} $ for $\del^2 = \del_\eta \del_{\theta_2}$ or $\del^2 = \del_{\theta_2}^2$ we have that these obey the equations,
\beq
\del_t F_j (t) = - V'' (\tilu_j (t) ) F_j (t) + V'' ( \tilu_{j} (t) ) F_{j-1} (t) - V''' ( \tilu_j (t) ) g_j (t)
\eeq
where $g_j (t) = \tilh_j (t)^2$ or $g_j (t) = \tilh_j (t) \tilk_j (t)$, with initial data $F_j (0) =0$ and the convention that $F_0 (t) = 0$. In either case, $g_j (t) \geq 0$ and so since $V''' (x) \leq 0$ for all $x$, the nonnegativity of $F_j (t)$ for all $t$ and $j$ follows from Lemma \ref{lem:ode-4}. 
\qed

The following relates the derivative of $\tilW_{N, t}$ to the pseudo-Gibbs measure.
\bel \label{lem:three-param-2} For any $\theta >0$ we have,
We have,
\beq
( \del_{\theta_2} \tilW_{N, t} ) ( \theta, \theta, \theta) = \Ett_{N, t} [ (s_1-YN^{2/3} )_+ ].
\eeq
\eel
\proof First, note that the functions $\del_{\theta_2} \tilu_j (t)$ equal the $\hf_j (t)$ as defined in \eqref{eqn:hf-def} with the choice $f(s) = \1_{ \{ s > Y N^{2/3} \} }$. Indeed, by \eqref{eqn:tilh-system}, they are solutions to the same system of ODEs with the same initial data.  Therefore, 
\begin{align}
\del_{\theta_2} \tilW_{N, t} &= \sum_{j=1}^N \hf_j (t) + \int_0^t \1_{ \{ s_0 > Y N^{2/3} \} } \d s \notag\\
&= \Ett_{N, t} \left[ \int_0^{s_0} f(u) \d u \right] = \Ett_{N, t} [ (s_0- YN^{2/3} )_+ ] 
\end{align}
where in the second equality we used \eqref{eqn:psg-eq-1}. \qed

\subsection{Proof of lower bound with vanishing characteristic direction}

We first prove the lower bound in the special case that the quantity $t- N\psiV_1 ( \theta ) =0$ vanishes. The general case will be seen to follow easily from a perturbation argument and the general lower bound of Corollary \ref{cor:bad-lower}. 

Parts of the following proof draw some inspiration from the proof of the lower bound of \cite{MFSV}.  Our proof involves lower bounding $W_{N, t} ( \theta_0, \theta_0)$ by $W_{N, t} ( \lambda, \lambda)$ for some $\lambda $ close to $\theta_0$.  However, we have written the proof in the ``backwards'' direction, starting with a lower bound for $W_{N, t} ( \lambda, \lambda)$  and upper bounding this quantity by $W_{N, t} ( \theta_0 , \theta_0)$.  While this has the side effect of making some steps appear unmotivated (they are more easily motivated if one reads the proof in the other ``forwards'' direction), it is easier to verify the logic involved due to upper bounds being somehow conceptually simpler in this setting than lower bounds.

\bep \label{prop:lower}
Let $\theta_0 >0$ be such that $\psiV_2 ( \theta_0 ) < 0$. Define
\beq
t_0 := N \psiV_1 ( \theta_0).
\eeq
There is a constant $c_1 >0$ so that,
\beq
\Var \left(  W_{N, t_0} ( \theta_0 , \theta_0 ) \right) \geq c_1 N^{2/3}
\eeq
for all sufficiently large $N$.
\eep
\proof Let us denote,
\beq
Q := \ee[ W_{N, t_0} ( \theta_0 , \theta_0 ) ]
\eeq
and define $\lambda$ by
\beq
\lambda := \theta_0 + N^{-1/3}.
\eeq
Choose a $c_* >0$ so that,
\beq
Q + c_*^{-1} N^{1/3} \geq \ee[ W_{N, t_0 } ( \lambda , \lambda ) ] \geq Q + 4 c_* N^{1/3} ,
\eeq
for all sufficiently large $N$. 
Here we use the fact that the function $\theta \to \ee[ W_{N, t_0} ( \theta , \theta ) ] = \theta t_0 - N \psiV_0 ( \theta)$ has a derivative that vanishes at $\theta = \theta_0$ and a strictly positive second derivative in a small neighborhood of $\theta_0$ by the assumption that $\psiV_2 ( \theta_0) < 0$. 

Defining $ Z:= W_{N, t_0} ( \lambda, \lambda ) - Q$ we have by the Paley-Zygmund inequality,
\begin{align}
\pp[ W_{N, t_0} ( \lambda , \lambda) \geq Q + 2 c_* N^{1/3} ] &\geq \pp \left[ Z \geq \frac{1}{2} \ee[ Z] \right] \notag\\
&\geq \frac{1}{4} \frac{ \ee[ Z]^2}{ \Var(Z) + \ee[ Z]^2} \notag\\
&\geq 2 c_1
\end{align}
for some $c_1 >0$ and all $N$ sufficiently large. In the last line we used that $c_* N^{1/3} \leq \ee[Z] \leq CN^{1/3}$ for some $C>0$ and that $\Var(Z) \leq C N^{2/3}$ by Theorem \ref{thm: upper-bound}. From \eqref{eqn:1der-sign-2} we have that,
\beq
W_{N, t_0} ( \lambda , \lambda ) \leq W_{N, t_0} ( \theta_0 , \lambda)
\eeq
and so,
\beq
\pp[ W_{N, t_0} ( \theta_0 , \lambda ) \geq Q + 2 c_* N^{1/3} ] \geq \pp[ W_{N, t_0} ( \lambda , \lambda ) \geq Q + 2 c_* N^{1/3} ] \geq 2 c_1 ,
\eeq
for all sufficiently large $N$. 
Consider now the three parameter height function $\tilW_{N, t}$ as defined in the previous section.  We have,
\begin{align}
W_{N, t_0} ( \theta_0 , \lambda )  &= \tilW_{N, t_0} ( \theta_0 , \lambda , \lambda ) \notag\\
&=  \tilW_{N, t_0} ( \theta_0 , \lambda , \theta_0 ) + \int_{\theta_0}^\lambda ( \del_{\theta_2}  \tilW_{N, t_0} ) ( \theta_0 , \lambda , \theta_2 ) \d \theta_2 .
\end{align}
Now, by Lemmas \ref{lem:three-param-1} and \ref{lem:three-param-2}, respectively, we have
\begin{align}
 \int_{\theta_0}^\lambda  ( \del_{\theta_2} \tilW_{N, t_0} ) ( \theta_0 , \lambda , \theta_2 ) \d \theta_2 \leq&  \int_{\theta_0}^\lambda   ( \del_{\theta_2} \tilW_{N, t_0} )  ( \lambda , \lambda , \lambda) \d \theta_2 \notag\\
 = & N^{-1/3}  ( \del_{\theta_2} \tilW_{N, t_0} )  ( \lambda , \lambda , \lambda) \notag\\
 = & N^{-1/3} E^{ ( \lambda, \lambda )}_{N, t_0} [ ( s_0- Y N^{2/3} )_+ ].
\end{align}
By Corollary \ref{cor:conc} and Markov's inequality, we have that for $Y$ sufficiently large and all $N$ sufficiently large, 
\beq
\pp\left[ N^{-1/3} E^{ ( \lambda, \lambda )}_{N, t_0} [ ( s_0- Y N^{2/3} )_+ ] > c_* N^{1/3} \right] \leq c_1.
\eeq
Therefore, for $N$ sufficiently large we have,
\beq
\pp[ W_{N, t_0} ( \theta_0 , \lambda ) \geq Q + 2 c_* N^{1/3} ]  \leq \pp[ \tilW_{N, t_0} ( \theta_0 , \lambda, \theta_0 ) \geq Q +  c_* N^{1/3} ] + c_1
\eeq
and so,
\beq
\pp[ \tilW_{N, t_0} ( \theta_0 , \lambda, \theta_0 ) \geq Q +  c_* N^{1/3} ] \geq c_1.
\eeq
Now, by the Cameron-Martin theorem and Cauchy-Schwarz,
\begin{align}
&\pp[ \tilW_{N, t_0} ( \theta_0 , \lambda, \theta_0 ) \geq Q +  c_* N^{1/3} ]^2 \notag\\
= &\ee \left[ \1_{ \{ \tilW_{N, t_0} ( \theta_0 , \theta_0, \theta_0 ) \geq Q +  c_* N^{1/3} \} } \e^{( \theta_0 - \lambda) B_0 (Y N^{2/3} ) - Y( \lambda - \theta_0)^2 N^{2/3} /2 } \right]^2 \notag\\
\leq & \pp[ W_{N, t_0} ( \theta_0 , \theta_0 ) \geq Q +  c_* N^{1/3} ] \e^{-Y} \ee[ \e^{2( \theta_0 - \lambda) B_0 (Y N^{2/3} ) } ] \notag\\
= &\pp[ W_{N, t_0} ( \theta_0 , \theta_0 ) \geq Q +  c_* N^{1/3} ] \e^{Y}
\end{align}
We therefore conclude that there is some $c_2 >0$ so that for all $N$ sufficient large,
\beq
\pp[ W_{N, t_0} ( \theta_0 , \theta_0 ) - \ee[ W_{N, t_0} ( \theta_0 , \theta_0 ) ] \geq c_2 N^{1/3} ] \geq c_2.
\eeq
The claim now follows.
\qed

\subsection{Lower bound, general case}
\label{sec:lower-proof}

We first show that the variance is Lipschitz in the time parameter.
\bel \label{lem:var-comp-2}
For any $s, t >0$ and $\theta >0$ we have,
\beq
\left| \Var \left( W_{N, t} ( \theta , \theta  ) \right) - \Var \left( W_{N, s} ( \theta , \theta  ) \right) \right| \leq |t-s|
\eeq
\eel
\proof We have,
\beq
 \del_t  \left( \del_\theta W_{N, t} \right) ( \theta , \theta ) = - V'' (u_N ( \theta , \theta ) ) h_N ( t , \theta , \theta) \geq 0
\eeq
where the last inequality follows from \eqref{eqn:1der-sign-1} as well as
\beq
\del_t  \left( \del_\eta W_{N, t} \right) ( \theta , \theta ) = - V'' (u_N ( \theta , \theta ) ) k_N ( t , \theta , \theta) \geq 0
\eeq
where we again use \eqref{eqn:1der-sign-1}.

Assume now $s >t$. Then using the first line of \eqref{eqn:var-rep} we have,
\beq
\Var ( W_{N, t} ( \theta , \theta  ) ) \leq N \psiV_1 ( \theta) -t + 2 \ee[ ( \del_\theta W_{N, s} ) ( \theta , \theta ) ] = \Var( W_{N, s} ( \theta , \theta  ) ) +s - t .
\eeq
Similarly, the second line of \eqref{eqn:var-rep} gives,
\beq
\Var ( W_{N, t} ( \theta , \theta  ) ) \geq t - N \psiV_1 ( \theta) - 2 \ee[ ( \del_\eta W_{N, s} ) ( \theta , \theta ) ] = \Var( W_{N, s} ( \theta , \theta  ) )+t -s
\eeq
and so we conclude. \qed

\subsubsection{Proof of Theorem \ref{thm:lower}}

The entirety of this subsection is devoted to the proof of Theorem \ref{thm:lower}. Fix $\theta_0 >0$ satisfying the hypotheses of the theorem. Let $t_0 = N \psiV_1 ( \theta_0)$. By Lemma \ref{lem:var-comp-2} and Proposition \ref{prop:lower} we have that there is a $c >0$ so that for all $t$ satisfying,
\beq
|t - t_0 | \leq c N^{2/3}
\eeq
we have,
\beq
\Var ( W_{N, t} ( \theta_0 , \theta_0 ) ) \geq c N^{2/3}.
\eeq
On the other hand, if $|t - t_0 | = |t - N \psiV_1 ( \theta_0 ) | \geq c N^{2/3}$  then by Corollary \ref{cor:bad-lower} we have,
\beq
\Var ( W_{N, t} ( \theta_0 , \theta_0 ) )  \geq |t - N \psiV_1 ( \theta_0 ) | \geq c N^{2/3} .
\eeq
We conclude the proof. \qed

\section{Gaussian case}
\label{sec:gauss}

In this short section we provide the proof of Proposition \ref{prop: normal} and analyze the case $V(x) = \frac{x^2}{2}$. In this case, the system \eqref{eqn: system} reads 
\begin{align}
    \mathrm{d}u_1&=(-\theta -u_1)\mathrm{d}t +\mathrm{d}B_0+\mathrm{d}B_1 \notag\\
    \mathrm{d}u_j&=(u_{j-1}-u_j)\mathrm{d}t +\mathrm{d}B_j-\mathrm{d}B_{j-1}
\end{align}
These equations have explicit solutions
\begin{align}
    u_1(t)&=\int_0^t \e^{-\int_s^t \mathrm{d}r}(-\theta \mathrm{d}s +\mathrm{d}B_0(s)+\mathrm{d}B_1(s))+\e^{-t}u_1(0)\\
    &=\theta(\e^{-t}-1) +\int_0^t \e^{-(t-s)}\mathrm{d}B_0(s)+\int_0^t \e^{-(t-s)}\mathrm{d}B_1(s)+\e^{-t}u_1(0),\\
    u_j(t)&=\int_0^t \e^{-\int_s^t \mathrm{d}r}(u_{j-1}(s)\mathrm{d}s +\mathrm{d}B_j(s)-\mathrm{d}B_{j-1}(s))+\e^{-t}u_j(0)\\
    &=\int_0^t \e^{-(t-s)}u_{j-1}(s)\mathrm{d}s+\int_0^t \e^{-(t-s)}\mathrm{d}B_{j}(s)-\int_0^t 
    \e^{-(t-s)}\mathrm{d}B_{j-1}(s)+\e^{-t}u_j(0) \text{ for } j\geq 2.
\end{align}
In particular, by induction we see that each $u_j (t)$ is a linear combination of $ \{ u_i (0) \}_{1 \leq i \leq j }$ and Wiener integrals against the $B_i (t)$. In particular, in this case the height function $W_{N, t}^\theta$ is a linear combination of jointly Gaussian random variables and so is Gaussian, and its distribution is therefore completely determined by its mean and variance. 

Consider now the functions,
\beq
h_n(t) :=-\int_0^t \frac{s^{n-1}}{(n-1)!}\e^{-s}ds
\eeq
and
\beq
f_n (t) := -\ee[ u_n (t) B_0 (t)]
\eeq
Clearly,
\beq
h_1(t) = f_1 (t) = \e^{-t}-1.
\eeq
On the other hand we have,
\beq
f_n (t) = \int_0^t \e^{ - (t-s)} f_{n-1} (s) \d s .
\eeq
Now by integration by parts,
\beq
h_n (t) = \frac{ t^{n-1} \e^{ - t }}{ (n-1)!} - h_{n-1} (t) = - \del_t h_n (t) - h_{n-1} (t).
\eeq
Therefore,
\beq
h_n(t) = \int_0^t \e^{ - (t-s)} h_{n-1} (s) \d s
\eeq
and so $h_n(t) = f_n (t)$ for all $n$ and $t$. We have,
\beq
\Var (W_{N, t}^\theta ) = N - t - 2 \ee[ W_{N, t}^\theta B_0 (t) ]
\eeq
Now, since $\d W_{N, t}^\theta = -u_N (t) \d t + \d B_N$ we have,
\begin{align*}
    - \ee[ W_{N, t}^\theta B_0 (t) ]&=-\int_0^t h_N(s)\mathrm{d}s\\
    &=\int_0^t\int_0^s \frac{r^{N-1}}{(N-1)!}\e^{-r}\mathrm{d}r\mathrm{d}s\\
    &=\int_0^t\int_r^t \frac{r^{N-1}}{(N-1)!}\e^{-r}\mathrm{d}s\mathrm{d}r\\
    &=\int_0^t (t-r)\frac{r^{N-1}}{(N-1)!}\e^{-r}\mathrm{d}r\\
    &=-t h_N(t)+N h_{N+1}(t)\\
    &=(N-t)h_N(t)+N\left[h_{N+1}(t)-h_N(t)\right]\\
    &=(N-t)h_N(t)+\frac{t^N}{(N-1)!}\e^{-t}.
\end{align*}
Therefore, 
\begin{align*}
    \mathrm{Var}[W_{N,t}]&=N-t+2\mathbb{E}[\partial_\theta W_{N,t}]\\
    &=(N-t)\left[1+2h_N(t)\right]+2\frac{t^N}{(N-1)!}\e^{-t}.
    \end{align*}
    When $t = N$ we use Stirling's approximation to get
        \beq
        \frac{\Var (W_{N, N}^\theta)}{\sqrt{N}}=\frac{2 N^N \e^{-N}}{(N-1)! \sqrt{N}}=2\left(\frac{N}{e}\right)^N\frac{\sqrt{N}}{N!} = \sqrt{\frac{2}{\pi}} + o (1)
        \eeq
        This yields the claim. \qed

\appendix

\section{Well-posedness; generator and invariant measure} \label{a:diff-1}

The purpose of this appendix is to prove Proposition \ref{prop:inv-measure}. The well-posedness  component of the proposition statement follows from Proposition \ref{prop:a-wp}. The characterization of the invariant measure is the content of Appendix \ref{a:inv-measure}. 

\subsection{Well-posedness}

In this section we deal with well-posedness of the system \eqref{eqn: system}. Since the coefficients of the Brownian terms are constant, the system \eqref{eqn: system} may be re-interpreted as a system of ordinary differential equations, for which classical results allow one to obtain a solution. The only thing that must be checked is that the system does not \emph{explode} in finite time, that is, the solution remains bounded on bounded time intervals. Here, the main point is that due to the confining nature of the potential and triangular nature of the system, it is straightforward to check the non-explosion. The following lemma will be used iteratively in the just-described proof.

\bel \label{lem:simple-wp}
Let $W : \rr \to \rr$ be a continuous function satisfying,
\beq
W(x) \mathrm{sign}(x) \geq - L (|x| + 1)
\eeq
for some $L >0$. Let $f (s) : \rr_+ \to \rr$ be a continuous function and let $u(t)$ be a continuous function satisfying,
\beq
u(t) = - \int_0^t W(u(s) ) \d s + f(t)
\eeq
on some time interval $[0, T]$. If $M > 1$ satisfies
\beq
M \geq \sup_{s \in [0, T] } |f(s)| + |u(0) |
\eeq
then,
\beq
\sup_{ t \in [0, T] } |u (t)| \leq 1 + (4M+LT) \left( 1 + TL \e^{ LT } \right) =: C_T.
\eeq
\eel
\proof Suppose for a contradiction that there exists a $t < T$ such that $|u(t)| = C_T$. Let
\beq
t_* := \inf \{ t \in [0, T] : |u(t) | = C_T \}.
\eeq
We assume that $u(t_* ) = C_T > 0$. The case when $u(t_*) = - C_T < 0$ is similar. Let,
\beq
s_* = \sup \{ t \in [0, t_*] : |u(t) | = 2M\}.
\eeq
Then since $u(t)$ is continuous we have $0 < s_* < t_* \leq T$, and moreover, for $s_* \leq t \leq t_*$,
\beq
u (s_* ) = 2M \leq u(t) \leq C_T = u (t_*).
\eeq
It follows that for $ s\in [s_*, t_*]$ that $W(u(s) ) \geq -L(u(s) +1)$ and so for $t \in (s_*, t_*]$ we have
\beq
u(t)  = u(s_*) + f(t) - f(s) - \int_{s_*}^t W(u(s) ) \d s \leq ( 4 M + LT ) + \int_{s_*}^t L u(s) \d s
\eeq
By Gronwall's integral inequality,
\beq
u(t) \leq (4M+LT) \left( 1 + TL \e^{ LT } \right), 
\eeq
which yields a contradiction upon taking $t = t_*$.  \qed

\bep \label{prop:a-wp} Let $V$ be of O'Connell-Yor type. 
For each choice of initial data and each realization of the Brownian motions, there exists a unique global-in-time solution of \eqref{eqn: system}. Moreover, for each fixed realization of the Brownian motion, the solutions of \eqref{eqn: system} are uniformly bounded as $t, \theta$ and the initial data vary over compact subsets of $[0, \infty) \times (0, \infty) \times \rr^N$.  Consequently, the system \eqref{eqn: system} defines a Markov process taking values in $C ( [0, \infty) , \rr^N)$. 
\eep
\proof The system \eqref{eqn: system} has the form of an $N$-dimensional stochastic differential equation:
\begin{equation}
    \mathrm{d}x= b(x)\mathrm{d}t+\sigma \mathrm{d}B,
\end{equation}
with smooth coefficients
\begin{equation}\label{eqn: b-coeff}
b(x)=(-\theta-V'(x_1),V'(x_1) -V'(x_2),\ldots, V'(x_{N-1})-V'(x_N))^T\in \mathbb{R}^N,
\end{equation}
and $\sigma \in M_{N \times (N+1)}(\mathbb{R})$ is given by
\begin{equation}\label{eqn: sigma}
\sigma=\left(\begin{array}{cccccc}
1 & 1 &  &  &  &\\
 & -1 & 1 &  & & \\
&    & -1 & 1  & &\\
 & & & \ddots &  &\\
 &  &  & -1 & 1 & \\
 &  &  &  & -1 & 1
\end{array}\right),
\end{equation}
and the Brownian motion $B$ is $B=(B_0,B_1,\ldots,B_N)^T$.

We start by considering, for a fixed continuous $f:\mathbb{R}_+\rightarrow \mathbb{R}^N$, the equations
\begin{equation}\label{eqn: vf-equation}
x(t)=x(0)+\int_0^t b(x(s))\mathrm{d}s+ f(t).
\end{equation}
This is a system of integral equations with right-hand side $F(x,t)=\int_0^t b(x(s))\,\mathrm{d}s+f(t)$, where $F$ locally Lipschitz in $x$ (the Lipschitz constant does not depend on $f$). By classical results (see for example \cite[Theorem 1, Chapter 21]{varadhan}) there exists, for each initial data $x(0)=x_0$ and each continuous $f$, a maximal time $\tau=\tau_\infty(f)>0$ of existence and a unique solution $x(t)=x(t,u_0,f)\in\mathbb{R}^N$ of \eqref{eqn: vf-equation} on $(0,\tau)$. Moreover, if $\tau_\infty(f)<\infty$, then $\lim_{t\rightarrow \tau_\infty^-}|x(t)|=\infty$. $\tau_\infty(f)$ is called the \emph{explosion time}.

We now claim that the explosion time satisfies $\tau_\infty(f)= \infty$, for every choice of $f$.  First, note that by assumption of \eqref{eqn: assumption} we have $V'(x) \leq 0$ for all $x$.  By this and the convexity of $V(x)$ we see that $W(x) = V'(x)$ satisfies the assumptions of Lemma \ref{lem:simple-wp}. Applying this lemma we see that $x_1(t)$ does not explode in finite time. For higher $j$, we write the system \eqref{eqn: system} as
\beq
x_j (t) = -\int_0^t V' (x_j (s) ) \d s + F(t),
\eeq
where $F(t)$ depends on $x_{j-1}$ and the Brownian motion terms.
Arguing inductively we see that Lemma \ref{lem:simple-wp} implies that if $x_{j-1}$ does not explode in finite time, then neither does $x_j$.  Evaluating the solution of \eqref{eqn: vf-equation} at each realization of the Brownian motion sample paths yields the solution to the system \eqref{eqn: system}. This shows that the system has global-in-time solutions. The claim about uniform boundedness follows from tracking the constants in the iterative applications of Lemma \ref{lem:simple-wp}.  The fact that this defines a Markov process is clear, as the solution to \eqref{eqn: vf-equation} depends only on the initial data and the increments of the Brownian motion sample paths.   \qed
   
   \subsection{Generator and invariant measure} \label{a:inv-measure}
   
   Denote $\del_j = \frac{ \del}{ \del x_j}$.  
We begin by remarking that system \eqref{eqn: system} is an It\^o diffusion with generator
\begin{equation}
\begin{split}
L&= \frac{1}{2}(\partial_1)^2+\frac{1}{2}\sum_{j=2}^N\left(\partial_j-\partial_{j-1}\right)^2+\frac{1}{2}\left(\partial_N\right)^2 \label{eqn:elliptic}\\
&+ \left(-V'(u_1)-\theta\right)\partial_1 + \sum_{j=2}^N \left[V'(u_{j-1})-V'(u_j)\right]\partial_j.
\end{split}
\end{equation}
This generator $L$ has the form
\beq
L= \frac{1}{2} \nabla^T a \nabla  +b^T \nabla
\eeq
where $a\in M_{N\times N}(\mathbb{R})$ is given by,
\beq
a :=\left(\begin{array}{cccccc}
2 & -1 &  &  &  &\\
-1 & 2 & -1 &  & & \\
&   -1 & 2 & -1 & &\\
 & & & \ddots &  &\\
 &  &  & -1 & 2 & -1\\
 &  &  &  & -1 & 2
\end{array}\right) = S + S^T ,
\eeq
where the difference operator has matrix elements $S_{ij} = \delta_{ij} - \delta_{i-1,j}$,  and the function $b$ is as in \eqref{eqn: b-coeff}. Note that if we define $U$ by
\beq
U= \sum_{j=1}^N V(u_j) + \theta u_j .
\eeq
then $b = - S \nabla U$.  From this we immediately see that the function $\e^{-U}$ satisfies the equation $L^* \e^{ - U} = 0$. Indeed, we have
\beq
\frac{1}{2} \nabla^T a \nabla \e^{ - U} = \nabla^T S \nabla \e^{ - U} = -\nabla^T (\e^{ - U} S \nabla U ) = \e^{ - U} \left( ( \nabla U)^T S \nabla U - \nabla^T S \nabla U \right)
\eeq
as well as,
\beq
\nabla^T \left( S ( \nabla U ) \e^{ - U} \right) = \e^{ - U} \left( \nabla^T S \nabla U - ( \nabla U )^T S \nabla U \right) ,
\eeq
from which it follows $L^* \e^{-U} = 0$. 
In order to show that $\e^{ - U}$ indeed defines an invariant measure we make a change of variable bringing $L$ into the form of a perturbation of the Laplacian. For this, we first establish the following.
\begin{prop}The symmetric matrix $a$ is positive definite. \end{prop}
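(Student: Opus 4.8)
The plan is to recognize that $a = S + S^T$ where $S$ is the lower-bidiagonal difference operator with $S_{ij} = \delta_{ij} - \delta_{i-1,j}$, so that for any $x \in \rr^N$ we have $x^T a x = 2 x^T S x$. The matrix $a$ is the standard discrete Dirichlet Laplacian on $N$ sites with a ``free'' boundary on one end and a Dirichlet-type boundary on the other; this is a classical positive definite matrix, but I will give a self-contained argument. First I would compute $x^T a x$ directly: writing out the quadratic form from the tridiagonal structure,
\beq
x^T a x = 2 x_1^2 + 2 \sum_{j=2}^N x_j^2 - 2 \sum_{j=2}^N x_{j-1} x_j = x_1^2 + \sum_{j=2}^N (x_j - x_{j-1})^2 + x_N^2.
\eeq
This identity is verified by expanding the squares on the right-hand side: the cross terms produce exactly $-2\sum_{j=2}^N x_{j-1}x_j$, the terms $x_{j-1}^2$ and $x_j^2$ for $2 \le j \le N$ together with the extra $x_1^2$ and $x_N^2$ reassemble to $2x_1^2 + 2\sum_{j=2}^N x_j^2$.

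From this telescoping-sum representation, positive semidefiniteness is immediate since the right-hand side is a sum of squares. For definiteness, suppose $x^T a x = 0$. Then each square vanishes: $x_1 = 0$, $x_N = 0$, and $x_j = x_{j-1}$ for all $2 \le j \le N$. The chain of equalities $x_1 = x_2 = \cdots = x_N$ combined with $x_1 = 0$ forces $x = 0$. Hence $a$ is positive definite. (One could alternatively note that $a = \sigma' (\sigma')^T$ for the $N \times (N+1)$ matrix $\sigma'$ whose rows realize the differences $x \mapsto (x_1, x_2 - x_1, \ldots, x_N - x_{N-1}, x_N)$, which is exactly the matrix $\sigma$ in \eqref{eqn: sigma}, so $a = \sigma \sigma^T$; then $x^T a x = |\sigma^T x|^2 \ge 0$ with equality iff $\sigma^T x = 0$, and $\sigma^T$ has trivial kernel by the same argument.)

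There is no real obstacle here; the only thing to be careful about is getting the boundary terms in the quadratic form right, i.e., confirming that the two ``extra'' diagonal contributions are $x_1^2$ and $x_N^2$ rather than, say, both attached to one end. This is settled by the explicit expansion above. I would present the sum-of-squares identity as the single key step and deduce both semidefiniteness and the triviality of the kernel from it in one line each. \qed
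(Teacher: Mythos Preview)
Your proof is correct. The sum-of-squares identity
\[
x^T a x = x_1^2 + \sum_{j=2}^N (x_j - x_{j-1})^2 + x_N^2
\]
is verified by your expansion, and the deduction that $x^T a x = 0$ forces $x_1 = x_2 = \cdots = x_N = 0$ is clean.

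The paper's proof is different in structure. It establishes nonnegativity by observing that $\frac{1}{2}\xi^T a \xi$ equals the second-order part of the generator $L$ applied to $\e^{x\cdot\xi}$; unpacking that expression yields exactly your sum-of-squares identity, so the content is the same, though the paper leaves it implicit. However, the paper then proves strict definiteness by a separate determinant argument: the recursion $D_n = 2D_{n-1} - D_{n-2}$ with $D_1=2$, $D_2=3$ gives $D_n = n+1 > 0$. Your approach is more economical, extracting both semidefiniteness and triviality of the kernel from the single identity. The paper's route has the side benefit of computing $\det a = N+1$ explicitly, but for the proposition as stated your argument is the more direct one.
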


\begin{proof}
First, we establish that the eigenvalues of the matrix $a$ are nonnegative. Indeed, we have,
\begin{align}
\frac{1}{2}\xi^t a \xi &= \frac{1}{2} \e^{ - x \cdot \xi } \nabla^T a \nabla \e^{ x \cdot \xi } \notag\\
&=\e^{ - x \cdot \xi }\left\{\frac{1}{2}(\partial_1)^2+\frac{1}{2}\sum_{j=2}^N\left(\partial_j-\partial_{j-1}\right)^2+\frac{1}{2}\left(\partial_N\right)^2\right\} \e^{x\cdot \xi}\geq 0,
\end{align}
where the second line follows from the fact that $\frac{1}{2} \nabla^T a \nabla$ gives the second-order part of the generator $L$ in \eqref{eqn:elliptic}. 
Moreover, denoting by $D_n$ the determinant of the matrix $a$ in dimension $n$, we have for $n\geq 2$
\[D_n = 2 D_{n-1}-D_{n-2}, \quad D_1=2, \quad D_2=3.\]
from which one easily verifies $D_n=n+1>0$. \qed
\end{proof}
Now, let $A = a^{-1/2}$ be the unique positive definite symmetric square root of $a^{-1}$ and consider the coordinates $v = A u$.  Since,
\beq
c \| x \|_2 \leq \| A x \|_2 \leq C \| x \|_2
\eeq
for some $c, C>0$ we see that process defined by $v(t) = A u(t)$ explodes if and only if $u$ does. We conclude that $v$ is also a Markov process with no explosion and moreover the invariant measures of $v$ are in one-to-one correspondence with those of $u$.  By the change of variable,
\beq
\nabla_u = A \nabla_v
\eeq
we see that the generator $\tilde{L}$ of $v$ given by 
\beq
\tilde{L} = \frac{1}{2} (A \nabla_v )^T a A \nabla_v + b(A^{-1} v)^T A \nabla_v = \frac{1}{2} \Delta_v + \tilde{b} (v)^T \nabla_v
\eeq
where we defined $\tilde{b} (v) := A b ( A^{-1} v )$.  We will obtain the uniqueness of the invariant measure from the following result \cite[Ch. 31, p. 254]{varadhan}, which is formulated for perturbations of the Laplacian such as $\tilde{L}$.
\begin{theorem}[]\label{thm:varadhan-invariant}
Let $D=\frac{1}{2}\Delta+B\cdot \nabla$ for which the corresponding diffusion does not \emph{explode}, that is, it almost surely remains bounded on bounded time intervals. (See \cite[Chapter 24]{varadhan} for more information, including criteria for non-explosion.) Assume $B(x)$ is $C^\infty(\mathbb{R}^N;\mathbb{R}^N)$. Define the formal adjoint $D^*$ of $D$ by
\[D^*=\frac{1}{2}\Delta-\nabla\cdot B.\]
Suppose there exists a smooth function $\varphi$ such that $D^* \varphi=0$, $\varphi \geq 0$. Then $\mu(A)=\int_A \varphi(y)\,\mathrm{d}y$ defines a unique invariant distribution for the process.
\end{theorem}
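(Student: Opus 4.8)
\noindent The plan is to prove the theorem in two steps: first, that $\mu := \varphi(y)\,\d y$ (normalized to a probability measure --- this requires $\varphi$ to be integrable, which is part of the hypothesis, or, in our application, a consequence of \eqref{eqn:Vthetalb}) is invariant for the semigroup $(P_t)_{t\ge0}$ of the diffusion; and second, that it is the \emph{only} invariant probability measure. The first part is a ``verification'' argument built on the stationary equation $D^*\varphi=0$ together with the non-explosion hypothesis; the second rests on the nondegeneracy and smoothing properties of the diffusion.

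\noindent For invariance, I would use that, since $B\in C^\infty$ and the process does not explode, the martingale problem for $D$ is well posed and the associated Markov semigroup $(P_t)$ is conservative, with $C_c^\infty(\rr^N)$ a core for its generator $D$. Dynkin's formula gives, for $f\in C_c^\infty(\rr^N)$,
\beq
P_t f - f = \int_0^t P_s(Df)\,\d s .
\eeq
Integrating against $\mu$ and applying Fubini,
\beq
\int_{\rr^N}(P_t f - f)\,\d\mu = \int_0^t\Big(\int_{\rr^N} P_s(Df)\,\varphi\,\d y\Big)\d s .
\eeq
The function $g_s:=P_s f$ is smooth by parabolic regularity, and formally $\int_{\rr^N}(Dg_s)\,\varphi\,\d y = \int_{\rr^N} g_s\,(D^*\varphi)\,\d y = 0$. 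Once this integration by parts is justified (the technical point, discussed below), we conclude $\int P_t f\,\d\mu = \int f\,\d\mu$ for all $f\in C_c^\infty$, hence, by a monotone class / density argument, for all bounded continuous $f$; that is, $\mu P_t = \mu$.

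\noindent For uniqueness, I would note that $\del_t - D$ is a uniformly parabolic operator with smooth coefficients (indeed its second-order part is the constant-coefficient operator $\tfrac12\Delta$), so the process admits a transition density $p_t(x,y)$, smooth in $(t,x,y)$ for $t>0$, and by the parabolic strong maximum principle (equivalently the Harnack inequality) $p_t(x,y)>0$ for all $x,y$ and all $t>0$. Hence any invariant probability measure $\nu$ satisfies $\nu(\d y) = \big(\int p_t(x,y)\,\nu(\d x)\big)\,\d y$, so is absolutely continuous with an everywhere-positive density. If $\nu_1\neq\nu_2$ were two invariant probability measures, then $\sigma:=\nu_1-\nu_2$ is a nonzero signed invariant measure of total mass $0$, and (using conservativeness and minimality of the Jordan decomposition) $\sigma_+,\sigma_-$ are themselves nonzero, mutually singular, invariant sub-probability measures; but both are absolutely continuous with positive densities, so they cannot be mutually singular --- a contradiction. (Equivalently, one invokes that a strong Feller, irreducible Markov semigroup admits at most one invariant probability measure, i.e.\ Doob's theorem.) Combined with the first step, $\mu$ is the unique invariant distribution.

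\noindent The main obstacle is the integration by parts in the invariance step --- passing from $\int(Dg_s)\varphi\,\d y$ to $\int g_s\,(D^*\varphi)\,\d y$ over all of $\rr^N$ with no boundary contribution at infinity. This is precisely where non-explosion enters: without it mass can escape to infinity and $\varphi\,\d y$ need not be invariant. I would make this rigorous by first running the diffusion killed on exiting a large ball $B_R$, where Green's identity is available with Dirichlet data and the corresponding sub-Markovian semigroup, and then letting $R\to\infty$, using conservativeness (a consequence of non-explosion, e.g.\ via a Lyapunov function) together with dominated convergence to kill the boundary terms in the limit. The remaining ingredients --- smoothness and strict positivity of the heat kernel, and the strong Feller / irreducibility uniqueness argument --- are standard consequences of uniform parabolicity with smooth coefficients and require no new input. \qed
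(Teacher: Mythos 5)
The paper does not prove this theorem; it is quoted verbatim from Varadhan's lecture notes \cite[Ch.\ 31, p.\ 254]{varadhan} and used as a black box to identify the invariant measure of the system \eqref{eqn: system}. There is therefore no ``paper's own proof'' to compare against, and your attempt has to be judged on its own merits.

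Your outline is the standard argument and is essentially correct. A few remarks. First, strict positivity of $\varphi$, which you implicitly need for the uniqueness step (the hypothesis only gives $\varphi \geq 0$), follows from the Hopf strong maximum principle applied to the uniformly elliptic operator $D^*$: a nonnegative solution of $D^*\varphi=0$ vanishing at one point must vanish identically, so a nontrivial $\varphi$ is everywhere positive. Second, the technical crux you correctly isolate --- justifying $\int (D g_s)\,\varphi\,\d y = \int g_s\,(D^*\varphi)\,\d y$ over all of $\rr^N$ with no contribution at infinity --- is exactly where non-explosion enters, and your cut-off/killed-process scheme is one standard way to carry it out; an equivalent route is to test against $\chi_R\,g_s$ for smooth cut-offs $\chi_R$ and use a Lyapunov function witnessing non-explosion to control the error terms. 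Third, in the uniqueness step, the passage from ``$\sigma := \nu_1 - \nu_2$ is an invariant signed measure'' to ``$\sigma_\pm$ are themselves invariant'' does require conservativeness of $(P_t)_{t\geq 0}$, which is again supplied by non-explosion, so that hypothesis is used at two separate points --- your parenthetical acknowledges this. With these details filled in the argument is complete and reproduces what one finds in Varadhan's notes, with the strong-Feller/irreducibility formulation of uniqueness (Doob's theorem) as a clean alternative to the Jordan-decomposition argument.
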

If $\Phi_A$ denotes the (unitary) composition map $( \Phi_A  f ) (x) := \det(A)^{-1/2} f (Ax)$ then, noting that $\tilde{L} = \Phi_A^* L \Phi_A$, and so $\tilde{L}^* = \Phi_A^* L^* \Phi_A$ we see that $\Phi_A^{-1} ( \e^{-U})$ defines a unique invariant measure for $\tilde{L}$. From the above discussion we see that the measure $\omega_\theta$ is the unique invariant measure for the Markov process with generator $L$.

\section{Derivatives} \label{a:diff}

In this section we deal with proving differentiability of the solutions $u_j (t , \eta , \theta)$ in the parameters $\eta$ and $\theta$. In general, the finite difference quotients and the derivatives satisfy various systems of ODEs. Therefore, we begin with a short section containing a few different kinds of systems of ODEs that we encounter and state some of their positivity-preserving properties, as well as standard contractivity properties, etc. 

\subsection{ODE lemmas} 
\label{a:ode}
\bel \label{lem:ode-1}
Let $W_j : \rr_+ \to \rr$ be nonnegative continuous functions. Let $f : \rr_+ \to \rr$ be a nonnegative piecewise continuous function. Then, the solution of the inhomogeneous linear system of ODEs,
\begin{align}
\del_t v_1 (t) &= - W_1 (t) v_1 (t) - f(t) \notag\\
\del_t v_j (t) &= - W_j (t) v_j (t) + W_{j-1} (t) v_{j-1} (t) , \qquad j \geq 2
\end{align}
with $v_j (0) = 0$ for all $j$ satisfies,
\beq
0 \leq - v_j (t) \leq \int_0^t f(s) \d s =: F(t)
\eeq
for all $t$. We also have,
\beq \label{eqn:ode-1-integrals}
0 \geq \int_0^t W_j (s) v_j (s) \d s \geq \int_0^t W_{j-1} (s) v_{j-1} (s) \d s  \geq \dots \geq - F(t).
\eeq
\eel 
\proof Recall the solution of $\del_t u = - W u -g$ is given by
\beq \label{eqn:1d-ode}
u(t) = \exp\left( -\int_0^t W(s) \d s \right) u(0) - \int_0^t \exp \left(- \int_s^t W(u) \d u \right) g(s) \d s.
\eeq
Applying this we first see that $v_1 (t ) \leq 0$ for all $t$ and then that $v_j (t) \leq 0$ for all $t$ if $v_{j-1} (t) \leq 0$ for all $t$. Hence, we see that the $v_j$'s are all nonpositive.  The bound $v_1 (t) \geq - F(t)$ follows from integrating
\beq
\del_t v_1 (t) =  - W_1 (t) v_1 (t) - f(t) \geq - f(t).
\eeq
On the other hand we also obtain,
\beq
\int_0^t W_1 (s) v_1 (s) \d s =  - F(t) - v_1 (t) \geq - F(t).
\eeq
Integrating the equation for $\del_t v_j$ gives,
\beq
\int_0^t W_j(s) v_j (s) \d s= \int_0^t W_{j-1} (s) v_{j-1} (s) \d s - v_j (t) \geq \int_0^t W_{j-1} (s) v_{j-1} (s) \d s,
\eeq
and so  \eqref{eqn:ode-1-integrals} follows by induction.  
Similarly, by integrating the equations for $\del_t v_j$
\beq
v_j (t) \geq \int_0^t W_{j-1} (s) v_{j-1} (s) \geq - F(t).
\eeq
This finishes the proofs. \qed

\bel \label{lem:ode-4} Let $W_n (t)$ be continuous nonnegative functions and $g_n (t)$ be nonnegative piecewise continuous. The solution $w_n(t)$ to the system
\begin{align}
\del_t w_1 (t) &= - W_1 (t) w_1 (t) + g_1 (t) \notag\\
\del_t w_j (t) &= - W_j (t) w_j (t) + W_{j} (t) w_{j-1} (t) + g_j (t), \qquad j \geq 2
\end{align}
with initial data $w_n (0) = 0$ for all $n$ satisfies $w_n (t) \geq 0$ for all $n$ and $t$.
\eel
\proof  This follows immediately from \eqref{eqn:1d-ode} and induction. \qed

\bel \label{lem:ode-2}
Let $W_j : \rr_+ \to \rr$ be  continuous nonnegative functions. Consider the homogeneous linear ODE,
\begin{align} \label{eqn:homog-ode}
\del_t v_1(t) &= - W_1 (t) v_1 (t) \notag\\
\del_t v_j (t) &= - W_j (t) v_j (t) + W_{j-1} (t) v_{j-1} (t) , \qquad j \geq 2
\end{align}
If the initial data are nonnegative then the $v_j(t)$ are nonnegative for all times $t$. Moreover,
\beq \label{eqn:l1-dec-1}
0 \leq \sum_{j=1}^n v_j (t) \leq \sum_{j=1}^n v_j (0),
\eeq
and
\beq
\int_0^t W_n (s) v_n (s) \d s \leq \sum^n_{j=1} v_j (0)
\eeq
For arbitrary initial data we have,
\beq \label{eqn:l1-dec-2}
\left| \sum_{j=1}^n v_j (t) \right| \leq \sum_{j=1}^n |v_j (0) |.
\eeq
\eel
\proof First consider the case of nonnegative initial data. From the explicit form \eqref{eqn:1d-ode}  of the solution we conclude the non-negativity of the $v_j (t)$ for all times $t$. We have,
\beq
\del_t \left( \sum_{j=1}^n v_j (t) \right) = - W(t) v_j (t) \leq 0
\eeq
and so \eqref{eqn:l1-dec-1} follows.  Via direct integration we have
\beq
\int_0^t W_n (s) v_n (s) \d s = v_n (0) - v_n (t) + \int_0^t W_{n-1} (s) v_{n-1} (s) \d s \leq v_n (0)  + \int_0^t W_{n-1} (s) v_{n-1} (s) \d s 
\eeq
and so the other estimate follows by induction.

 For the second claim, consider the solution $g_j (t)$ of \eqref{eqn:homog-ode} with initial data $g_j (0) = |v_j (0)|$. Then $w_j (t) := g_j (t) - v_j (t)$ also solves \eqref{eqn:l1-dec-1} and is nonnegative for all times $t$, as the initial data is nonnegative. Hence, 
\beq
\sum_{j=1}^n v_j (t) \leq \sum_{j=1}^n g_j (t) \leq \sum_{j=1}^n g_j (0) = \sum_{j=1}^n |v_j (0) |
\eeq
where in the second inequality we applied \eqref{eqn:l1-dec-1} to $g_j(t)$. 
The upper bound of \eqref{eqn:l1-dec-2} follows. The lower bound follows by considering $w_j (t) := g_j (t) + v_j (t)$. \qed

\bel \label{lem:ode-3}
Let $W_j, g_j : \rr_+ \to \rr$ be nonnegative continuous functions. Consider the system of ODEs,
\begin{align}
\del_t v_1 (t) &= - W_1 (t) v_1 (t) + g_1 (t) \notag\\
\del_t v_j (t) &= - W_j (t) v_j (t) + W_{j-1} (t) v_{j-1} (t) + g_j (t) - g_{j-1} (t) ,
\end{align}
with $v_j (0) = 0$.
Then for any $n$ we have for all $t$,
\beq
\sum_{j=1}^n v_j (t) \geq 0 ,
\eeq
and the estimates
\beq
  -\sum_{j=1}^{n-1} \int_0^t g_j (s) \leq v_n (t)  \leq \sum_{j=1}^n \int_0^t g_j (s) \d s.
\eeq
\eel
\proof The partial sums $e_k := \sum_{j \leq k } v_j$ satisfy,
\beq
\del_t e_k(t) = - W_k(t) (e_k (t) - e_{k-1} (t) ) +g_k (t) ,
\eeq
where $e_0 = 0$. From Lemma \ref{lem:ode-4} we conclude that $e_n (t) \geq 0$ for all $n$ and $t$. 

We now turn to the estimates. For any $k \geq 1 $ let $\{ m^k_j (t)\}_{j=1}^\infty$ satisfy,
\begin{align}
\del_t m^k_1 (t) &= - W_1 (t) m^k_1 (t) - \delta_{1k} g_1 (t) \notag\\
\del_t m^k_j (t) &= - W_j (t) m^k_j (t) + W_{j-1} m^k_{j-1} (t) - \delta_{jk} g_j (t) , \qquad j\geq 2
\end{align}
with initial condition $m^k_j (0) = 0$ 
and for $k \geq 2$ let $\{ w^k_j \}_{j=1}^\infty (t)$ satisfy,
\begin{align}
\del_t w^k_1 (t) &= - W_1 (t) w^k_1 (t)\notag\\
\del_t w^k_j (t) &= - W_j (t) w^k_j (t) + W_{j-1} w^k_{j-1} (t) - \delta_{jk} g_{j-1} (t) , \qquad j\geq 2
\end{align}
with initial condition $w^k_j (0) = 0$.  Note that $m_j^k$ and $w_j^k$ are identically $0$ for $j < k$ and moreover,
\beq
v_n (t) = \sum_{k=1}^n ( w_j^k (t) - m_j^k (t) ),
\eeq
by linearity of the equations, where we set $w^1_j =0$. By Lemma \ref{lem:ode-1} we have,
\beq
0 \leq -w_j^k (t) \leq \int_0^t g_{k-1} (s) \d s, \qquad 0 \leq - m_j^k (t) \leq \int_0^t g_k (s) \d s.
\eeq
The claim follows. \qed

\subsection{First derivatives} \label{a:1-der}

\bep \label{prop:1d-finite-dif-d}
Let $V$ be of O'Connell-Yor type. Fix $\theta, \eta >0$. For $|h| < \theta \wedge 1$ define the difference quotient,
\beq
\Deld_{j, h} (t, \eta, \theta ) := \frac{ u_j (t, \eta, \theta+h) - u_j ( t , \eta , \theta ) }{h}.
\eeq
Then, the estimate,
\beq
0 \leq - \Deld_{j, h} (t, \eta, \theta) \leq t
\eeq
holds. Additionally, define,
\beq
F_j (t) := \int_0^1 V'' ( \tau u_j ( t , \eta, \theta + h ) + ( 1 - \tau ) u_j (t, \eta, \theta ) ) \d \tau ,
\eeq
Then we have the estimate,
\beq
\int_0^t (- \Deld_{j, h} (s) ) F_j (s) \d s \leq t .
\eeq
\eep
\proof For notational simplicity, let us denote $\Deld_j (t) = \Deld_{j, h} (t, \eta, \theta)$.  By direct calculation the $\Deld_j$ satisfy,
\begin{align}
\del_t \Deld_1 (t) &= - F_1 (t) \Deld_1 (t) - 1 \notag\\
\del_t \Deld_j (t) &= - F_j (t) \Deld_j (t) + F_{j-1} (t) \Deld_j (t) , \qquad j \geq 2
\end{align}
where $F_j$ is as above. The initial data satisfies $\Deld_j (0) = 0$. The claim now follows from Lemma \ref{lem:ode-1}, since $V'' \geq 0$ by assumption. \qed

\bec \label{cor:1st-der-1}
Let $V$ be of O'Connell-Yor type. The functions $u_j (t, \eta, \theta)$ are differentiable in $\theta$ and the derivatives,
\beq
h_j (t) = h_j (t, \eta, \theta) := \del_\theta u_j (t, \theta, \eta)
\eeq
satisfy the system of ODEs,
\begin{align} \label{eqn:system-1d-d}
\del_t h_1 (t) &= - V'' (u_1 (t) ) h_1 (t) - 1 \notag\\
\del_t h_j (t) &= - V'' (u_j (t) ) h_j (t) + V'' (u_{j-1} (t) ) h_{j-1} (t) , \qquad j \geq 2
\end{align}
where $u_j (t) = u_j (t, \eta, \theta)$. Moreover, the $h_j$ are jointly continuous in $\theta$ and $\eta$ and satisfy,
\beq
0 \leq -h_j (t) \leq t.
\eeq
\eec
\proof Denote the solution to \eqref{eqn:system-1d-d} by $h_j (t)$ and the difference quotients of Proposition \ref{prop:1d-finite-dif-d} by $\Deld_j(t)$. By the calculations given there we have that the difference $w_j := h_j - \Deld_j$ satisfies,
\begin{align}
\del_t w_1 (t) &= - V'' (u_1(t) ) w_1 (t) + \Deld_1 (t) (V''(u_1(t) ) - F_1 (t) ) \notag\\
\del_t w_j (t) &= - V''(u_j(t) ) w_j (t) + V'' (u_{j-1} ) w_{j-1} (t) \notag\\
 & + \Deld_j (t) (V''(u_j(t) ) - F_j (t) ) - \Deld_{j-1} (t) (V''(u_{j-1}(t) ) - F_{j-1} (t) ) , \qquad j \geq 2 ,
\end{align}
where the $F_j$ are as in the proof of Proposition \ref{prop:1d-finite-dif-d}. The estimates of Proposition \ref{prop:1d-finite-dif-d} and the fact that $V$ is smooth imply that the inhomogeneous terms in the above system all tend to $0$ uniformly in any interval $[0, T]$. Therefore, by the explicit form of the solution of the above system, one sees that the $w_j$ tend to $0$ as $h \to 0$ uniformly in $t$.

The joint continuity follows from the fact that the coefficients in the ODEs satisfied by the $h_j$'s are continuous in $\theta$ and $\eta$.  \qed

Recall the initial data $u_j (0, \eta, \theta)$ are given by $H_\theta (q_j)$ where $H_\theta = F_\theta^{-1}$. Defining $G(\theta, x, y ) : \rr_+ \times (0, 1) \times \rr \to \rr$ by 
\beq
G ( \theta, x, y) := F_\theta (y) - x
\eeq
we see that by the implicit function theorem and the smoothness of $F$ that the function $H_\theta (x)$ which satisfies $G(\theta, x, H_\theta (x)) = 0$ is smooth in $\theta$ and $x$. 

Let $X$ denote a random variable distributed according to $\nu_\eta$. Then,
\beq
\del_\eta F_\eta (u) = - \Cov ( X , \1_{ \{ X \leq u \} } ) \geq 0,
\eeq
where we used the general fact that $\Cov (Y, F(Y) ) \geq 0$ for any random variable $Y$ and increasing $F$, provided the covariance exists. Moreover by differentiating $F_\eta ( H_\eta (q) ) = q$ we see that,
\beq \label{eqn:delH-sign}
\del_\eta H_\eta (q) = - \frac{ ( \del_\eta F_\eta ) ( H_\eta (q) ) }{ F'_\eta ( H_\eta (q) )}  \leq 0.
\eeq
\bep \label{prop:1d-finite-dif-i}  
Let $V$ be of O'Connell-Yor type. Fix $\theta, \eta >0$. For $|h| < \eta \wedge 1$ define the difference quotient,
\beq
\Deli_{j, h} (t, \eta, \theta) := \frac{ u_j ( t, \eta + h , \theta ) - u_j ( t , \eta , \theta ) }{ h}.
\eeq
Then, the $\Deli_{j, h} (t , \eta , \theta)$ are all non-positive and we have the estimate,
\beq
 - \sum_{j=1}^n \Deli_{j, h} ( t , \eta, \theta) \leq - \sum_{j=1}^n \Deli_{j, h} ( 0 , \eta, \theta) 
\eeq
 Additionally, define,
\beq
G_j (t) := \int_0^1 V'' ( \tau u_j ( t , \eta+h, \theta  ) + ( 1 - \tau ) u_j (t, \eta, \theta ) ) \d \tau .
\eeq
Then we have the estimate,
\beq
\int_0^t (- \Deli_{j, h} (t, \eta, \theta ) ) G_j (s) \d s \leq \sum_{k=1}^j \Deli_{k, h} (0, \eta, \theta)
\eeq
\eep
\proof For notational simplicity we denote $\Deli_j (t) = \Deli_{j, h} (t, \eta, \theta)$. By direct calculation they satisfy the system of ODEs,
\begin{align}
\del_t \Deli_1 (t) &= - G_1 (t) \Deli_1 (t) \notag \\
\del_t \Deli_j (t) &= - G_j (t) \Deli_j (t) + G_{j-1} (t) \Deli_{j-1} (t) , \qquad j \geq 2.
\end{align}
The claim follows by Lemma \ref{lem:ode-2} and the fact that \eqref{eqn:delH-sign} implies that $\Deli_j (0) \leq 0$ for all $j$. \qed

Given the above proposition, the proof of the following is almost identical to the proof of Corollary \ref{cor:1st-der-1} given Proposition \ref{prop:1d-finite-dif-d} and is omitted. Note that \eqref{eqn:delH-sign} gives that the $k_j (0)$ defined below satisfy $k_j (0) \leq 0$ for all $j$. 
\bec \label{cor:1st-der-2}
Let $V$ be of O'Connell-Yor type. The functions $u_j (t, \eta, \theta)$ are differentiable in $\eta$ and the derivatives
\beq
k_j (t) = k_j (t, \eta, \theta)  = \del_\eta u_j (t, \eta, \theta) 
\eeq
satisfy the system of ODEs,
\begin{align}
\del_t k_1 (t) &= - V'' (u_1 (t) ) k_1 (t) \notag\\
\del_t k_j (t) &= - V'' (u_j (t) ) k_j (t) + V'' (u_{j-1} (t) ) k_{j-1} (t) , \qquad j \geq 2
\end{align}
where $u_j (t) = u_j (t , \eta, \theta)$. Moreover, the $k_j$ are jointly continuous in $\theta$ and $\eta$ and satisfy the inequality,
\beq
0 \leq - k_n ( t , \eta , \theta ) \leq - \sum_{j=1}^n k_j (0, \eta , \theta) .
\eeq
\eec

\subsection{Second derivatives} \label{a:2-der}

\bep \label{prop:2-dd}
Consider the difference quotients,
\beq
\Deldd_{j, h} (t, \eta, \theta) := \frac{ h_j (t, \eta, \theta + h ) - h_j (t ,\eta, \theta ) }{h}
\eeq
Then,
\beq
| \Deldd_{j, h} (t , \eta , \theta ) | \leq C jt^2(1+t)
\eeq
for some $C>0$.
\eep
\proof For simplicity of notation let $\Deldd_j (t) = \Deldd_{j, h} (t, \eta, \theta)$. These satisfy the system of ODEs,
\begin{align}
\del_t \Deldd_1 (t) &= - V'' ( u_1 ( t , \eta, \theta ) ) \Deldd_1 (t) - Q_1 (t) \cdot h_1 (t , \eta, \theta+h) \notag\\
 \del_t \Deldd_j (t) &= -  V'' ( u_j ( t , \eta, \theta ) ) \Deldd_j (t) +  V'' ( u_{j-1} ( t , \eta, \theta ) ) \Deldd_{j-1} (t) \notag\\
 &+ Q_j (t) \cdot h_j (t, \eta , \theta + h ) - Q_{j-1} (t) \cdot h_{j-1}(t, \eta, \theta+h ) 
\end{align}
where,
\beq
Q_j (t) := \Deld_{j, h} ( t , \theta, \eta) \int_0^1 V''' ( \tau u_j (t, \eta, \theta + h ) + (1 - \tau ) u_j ( t , \eta , \theta ) ) \d \tau.
\eeq
By the assumption that $0 \leq -V''' (x)  \leq C( V''(x)+1)$ we see that,
\beq \label{eqn:2-dd-proof-1}
0 \leq   Q_j (t) \leq -C \Deld_{j, h} ( t , \eta , \theta) (F_j (t)+1)
\eeq
 where $F_j$ are from Proposition \ref{prop:1d-finite-dif-d}. The estimates now follow from Lemma \ref{lem:ode-3} and the estimates of the time integrals of the $F_j$ of Proposition \ref{prop:1d-finite-dif-d}, as well as the estimate for $h_j (t)$ given in Corollary \ref{cor:1st-der-1}.  \qed

\bep
Consider the difference quotients,
\beq
\Delii_{j, h} (t, \eta, \theta) := \frac{ k_j (t, \eta + h , \theta ) - k_j (t, \eta, \theta ) }{h}. 
\eeq
We have,
\beq
| \Delii_{n, h} (t , \eta , \theta) | \leq \sum_{j=1}^n | \Delii_{n, h} (0, \eta, \theta ) | + Cn (1+t) \left( \sum_{j=1}^n | \Deli_{j,h} ( 0, \eta, \theta) | \right) \left( \sum_{j=1}^n -k_j (0, \eta+h, \theta ) \right)
\eeq
\eep
\proof Denote $\Delii_j (t) = \Delii_{j, h} ( t , \eta, \theta)$ for simplicity. We have that the $\Delii_j (t)$ satisfy the system,
\begin{align} \label{eqn:ii-system}
\del_t \Delii_1 (t) &= - V'' ( u_1 ( t , \eta, \theta ) ) \Delii_1 (t) - Q_1 (t) \cdot k_1 (t , \eta+h, \theta) \notag\\
 \del_t \Delii_j (t) &= -  V'' ( u_j ( t , \eta, \theta ) ) \Delii_j (t) +  V'' ( u_{j-1} ( t , \eta, \theta ) ) \Delii_{j-1} (t) \notag\\
 &+ Q_j (t) \cdot k_j (t, \eta+h , \theta  ) - Q_{j-1} (t) \cdot k_{j-1}(t, \eta+h, \theta ) 
\end{align}
where,
\beq
Q_j (t) := \Deli_{j, h} ( t , \theta, \eta) \int_0^t V''' ( \tau u_j (t, \eta+h, \theta  ) + (1 - \tau ) u_j ( t , \eta , \theta ) ) \d \tau.
\eeq
Let now $f_j$ solve
\begin{align}
\del_t f_1 &= - V'' (u_1 (t, \eta, \theta ) ) f_1 \notag\\
\del_t f_j &= - V'' (u_j (t, \eta, \theta) ) f_j + V'' (u_{j-1} ( t , \eta , \theta ) ) f_{j-1} 
\end{align}
with initial data $f_j (0) = \Delii_j (0)$. By Lemma \ref{lem:ode-2} we have,
\beq
| f_n (t) | \leq 2 \sum_{j=1}^n | \Delii_j (0) |.
\eeq
Let $w_j (t) = \Delii_j (t) - f_j (t)$ so that it satisfies the system of equations \eqref{eqn:ii-system} with initial data $w_j (0) = 0$.  By the assumption that $0 \leq -V''' (x)  \leq C( V''(x)+1)$ we see that,
\beq \label{eqn:2-ii-proof-1}
0 \leq  Q_j (t) \leq -C \Deli_{i, h} ( t , \eta , \theta) (G_j (t)+1)
\eeq
 where $G_j$ are from Proposition \ref{prop:1d-finite-dif-i}.  Applying now Lemma \ref{lem:ode-4}, Proposition \ref{prop:1d-finite-dif-i} to estimate the time integrals of the $G_j$ as well as the estimates for $k_j$ of Corollary \ref{cor:1st-der-2}, we find
\beq
|w_n (t) | \leq Cn (1+t) \left( \sum_{j=1}^n | \Deli_{j,h} ( 0, \eta, \theta) | \right) \left( \sum_{j=1}^n -k_j (0, \eta+h, \theta ) \right) .
\eeq
The claim follows. \qed

\bep
Consider the difference quotients,
\beq
\Delid_{j,h} (t, \eta , \theta) := \frac{ k_j (t , \eta , \theta+h ) - k_j (t , \eta , \theta ) }{h}
\eeq
We have,
\beq
| \Delid_{n, h} (t , \eta , \theta ) | \leq C n t(1+t) \left( \sum_{j=1}^n - k_j (0 , \eta , \theta ) \right).
\eeq
\eep
\proof Denote $\Delid_j (t) = \Delid_{j,h} ( t , \eta , \theta)$ for simplicity. We have that they satisfy,
\begin{align}
\del_t \Delid_j (t) &= - V'' ( u_1 ( t , \eta, \theta ) ) \Delid_1 (t) - Q_1 (t) \cdot k_1 (t , \eta, \theta+h) \notag\\
 \del_t \Delid_j (t) &= -  V'' ( u_j ( t , \eta, \theta ) ) \Delid_j (t) +  V'' ( u_{j-1} ( t , \eta, \theta ) ) \Delid_{j-1} (t) \notag\\
 &+ Q_j (t) \cdot k_j (t, \eta , \theta +h ) - Q_{j-1} (t) \cdot k_{j-1}(t, \eta, \theta+h ) 
\end{align}
where,
\beq
Q_j (t) := \Deld_{j, h} ( t , \theta, \eta) \int_0^t V''' ( \tau u_j (t, \eta, \theta+h  ) + (1 - \tau ) u_j ( t , \eta , \theta ) ) \d \tau ,
\eeq
with initial data $0$.  We now proceed in a similar fashion to the previous two propositions by applying Lemma \ref{lem:ode-3}. The $Q_j$ can be controlled by the $F_j$ of Proposition \ref{prop:1d-finite-dif-d} using \eqref{eqn:2-dd-proof-1}. The time integrals are then estimated using Proposition \ref{prop:1d-finite-dif-d} as well as Corollary \ref{cor:1st-der-2} to control the $k_j$'s. \qed
\bep
Consider the difference quotients,
\beq
\Deldi_{j,h} (t, \eta , \theta) := \frac{ h_j (t , \eta+h , \theta ) - h_j (t , \eta , \theta ) }{h}
\eeq
Then,
\beq
| \Deldi_{n,h} (t, \eta , \theta)  | \leq C n  t ( 1+ t) \left( \sum_{j=1}^n - \Deli (0 , \theta , \eta ) \right)
\eeq
\eep
\proof Denote $\Deldi_{j} (t) = \Deldi_{j, h} (t, \eta , \theta)$ for simplicity. They satisfy,
\begin{align}
\del_t \Deldi_j (t) &= - V'' ( u_1 ( t , \eta, \theta ) ) \Deldi_1 (t) - Q_1 (t) \cdot h_1 (t , \eta+h, \theta) \notag\\
 \del_t \Deldi_j (t) &= -  V'' ( u_j ( t , \eta, \theta ) ) \Deldi_j (t) +  V'' ( u_{j-1} ( t , \eta, \theta ) ) \Deldi_{j-1} (t) \notag\\
 &+ Q_j (t) \cdot h_j (t, \eta +h, \theta  ) - Q_{j-1} (t) \cdot h_{j-1}(t, \eta+h, \theta ) 
\end{align}
with $0$ initial condition where
\beq
Q_j (t) := \Deli_{j, h} ( t , \theta, \eta) \int_0^t V''' ( \tau u_j (t, \eta+h, \theta  ) + (1 - \tau ) u_j ( t , \eta , \theta ) ) \d \tau .
\eeq
We now proceed in a similar fashion to the previous two propositions by applying Lemma \ref{lem:ode-3}. The $Q_j$ can be controlled by the $G_j$ of Proposition \ref{prop:1d-finite-dif-i} using \eqref{eqn:2-ii-proof-1}.  The time integrals are then estimated using Proposition \ref{prop:1d-finite-dif-i} as well as Corollary \ref{cor:1st-der-1} to control teh $h_j$'s. \qed

\subsubsection{Proof of Proposition \ref{prop:2nd-der-system}} \label{a:2-der-summary}

In this section we summarize the proof Proposition \ref{prop:2nd-der-system}. It is straightforward given the four propositions stated and proven in Appendix \ref{a:2-der} for each of the four different sets of finite difference quotients. First, note that Section \ref{a:1-der} establishes that in particular the $u_j (t, \theta, \eta)$ are all jointly continuous in $(t, \eta, \theta)$. Since $V$ is smooth it follows from representing the solutions $h_j (t, \eta, \theta)$ and $k_j (t, \eta, \theta)$ as some combination of iterated integrals (i.e., repeatedly iterating \eqref{eqn:1d-ode}) that they are continuous in $(t, \eta, \theta)$. Then, by representing any of the finite difference quotients $\Deldd_{j,h} (t), \Deldi_{j, h} (t)$, etc., as iterated integrals in a similar fashion, one sees that as $h \to 0$ these converge uniformly to the solutions of the systems of ODEs described in Proposition \ref{prop:2nd-der-system}. We also conclude the continuity of the second derivatives in the parameters. \qed

\subsubsection{Three-parameter height function} \label{a:three}

In Section \ref{sec:three-param} we introduced a three parameter system via \eqref{eqn:3-sys} and associated height function in \eqref{eqn:3-height}. In this section we discuss well-posedness and differentiability of the system. Much of what is needed follows either directly or with similar proofs in two parameter case.

Well-posedness of the system \eqref{eqn:3-sys} can either be proven via the same method as the two-parameter system, relying on Lemma \ref{lem:simple-wp}, or can be constructed via concatenating the solution maps at the time $t = Y N^{2/3}$ that were constructed in the two-parameter case in Appendix \ref{a:diff-1} for each of the $\theta_1$ and $\theta_2$. Due to uniqueness, this results in the same solution.

In order to prove that each of the $\tilu_j (t, \eta, \theta_1, \theta_2)$ are differentiable in the latter three parameters one considers the finite difference quotients as in Section \ref{a:1-der}.  Proceeding in the exact same fashion as in Proposition \ref{prop:1d-finite-dif-d} one finds for $|h| < \theta_1$, 
\beq
0 \leq - \frac{ \tilu_j ( t , \eta , \theta_1 + h , \theta_2 ) - \tilu_j ( t , \eta , \theta_1 + h , \theta_2 )}{ h} \leq t , 
\eeq
and a similar estimate for the difference quotient in $\theta_2$ (in fact, a slightly better estimate is true but not required). For the finite difference quotient in $\eta$ one finds the same estimate as in Proposition \ref{prop:1d-finite-dif-i}. From these estimates, one sees that the $\tilu_j (t, \eta, \theta_1, \theta_2)$ are continuous functions of all four parameters. Then, proceeding as in the proof of Corollaries \ref{cor:1st-der-1} and \ref{cor:1st-der-2}, one finds that they are differentiable in $(\eta, \theta_1, \theta_2)$. 

At this point, one obtains that the first order derivatives satisfy a system of ODEs of triangular form, similar to the cases of $h_j (t)$ and $k_j (t)$ outlined above. These ODEs have explicit representations of integrals of $V$ and its derivatives applied to the functions $\tilu_j ( t , \eta , \theta_1, \theta_2)$. Since these later functions are continuous, it follows easily that the first order derivatives of the $\tilu_j$ are themselves continuous functions of $(\eta, \theta_1, \theta_2)$. One can then easily repeatedly differentiate these integral expressions repeatedly to find that the $\tilu_j (t)$ are $C^k$ in the parameters $(\eta, \theta_1, \theta_2)$. This is sufficient for the purposes of Section \ref{sec:lower}. 

\section{Estimates of moments of initial data derivatives} \label{a:1b}

We require some estimates on the moments of derivatives of the $u_j (0 , \eta, \theta)$ introduced in Definition \ref{def:coupling} with respect to to $\eta$. They are  consequences of some direct calculations which we present in this section.

\bel \label{lem:1b-1}
Let $V$ be a potential of O'Connell-Yor type and let $\eta_0 > 0$. There is a $C>0$ so  that for all $\eta_0 \leq 2 \eta \leq 4 \eta_0$ we have,
\beq
| \del_\eta H_\eta (q) | \leq C (1+| H_\eta (q) | )
\eeq
and
\beq
| \del^2_\eta H_\eta (q) | \leq C (1 + |H_\eta (q) |^2 ) ( 1 + V ( H_\eta (q) ) )
\eeq
\eel
\proof By direct calculation,
\beq
\del_\eta H_\eta (q) = \frac{ \Cov (X , \1_{ \{ X \leq u \} } ) }{F'_\eta ( u ) }
\eeq
where $X$ is distributed according to $\nu_\eta$ and $u = H_\eta (q)$. First assume $u \geq 0$. Then, using that $0 \leq V(x) \leq C$ for $x\geq 0$ we have,
\beq
|  \Cov (X , \1_{ \{ X \leq u \} } ) | =|  \Cov (X , \1_{ \{ X \geq u \} } ) | \leq C \int_u^\infty(1 + s ) \e^{ - \eta s} \d s \leq C(1+u) \e^{ -\eta u},
\eeq
and the desired estimate follows for $u \geq 0$. The convexity of $V$ and the growth assumptions at $-\infty$ imply that $\lim_{x \to -\infty} V'(x) = -\infty$. For $x <y$ using $V(y) - V(x) \leq (y-x) V'(y)$ we see that for $s < u \leq 0$ there is a constant $c>0$ such that,
\beq
V(s) + \eta s \geq V(u) + \eta u +c (u-s) - c^{-1}.
\eeq
So for $u \leq 0$ we have,
\beq
|  \Cov (X , \1_{ \{ X \leq u \} } ) | \leq C \int_{-\infty}^u (1+|s| ) \e^{ - (V (s) + \eta s ) } \d s \leq C(1+ |u| ) \e^{ - V(u) + \eta u }.
\eeq
The estimate for $\del_\eta H_\eta$ follows. By direct calculation,
\begin{align}
\del_\eta^2 H_\eta (q) = - \frac{  F''_\eta (u) ( \del_\eta H_\eta (q) )^2}{ F'_\eta (u) }- 2 \frac{ \del_\eta F' ( u )  ( \del_\eta H_\eta (q) ) }{ F'_\eta (u) } - \frac{ \del_\eta^2 F_\eta (u) }{ F'_\eta (u) }.
\end{align}
By the previous result we have,
\beq
\left|  \frac{  F''_\eta (u) ( \del_\eta H_\eta (q) )^2}{ F'_\eta (u) } \right| \leq C ( V(u) +1)(1+ |H_\eta (q) | )^2
\eeq
where we used that $|V'(x)| \leq C (V(x) + 1)$ (which follows by integrating the second inequality in \eqref{eqn:deriv-assump} twice). Similarly,
\beq
\left| \frac{ \del_\eta F' ( u )  ( \del_\eta H_\eta (q) ) }{ F'_\eta (u) } \right| \leq C (1 +| H_\eta (q) |).
\eeq
By direct calculation,
\begin{align}
\del_\eta^2 F_\eta (u) &= \Cov ( X \1_{ \{ X \leq u \} } , X ) - \ee[X ] \Cov ( \1_{ \{ X \leq u \} } , X ) - \pp[ X \leq u ] \Var (X) \notag\\
&= - \Cov ( X \1_{ \{ X \geq u \} } , X ) + \ee[X ] \Cov ( \1_{ \{ X \geq u \} } , X ) + \pp[ X \geq u ] \Var (X)
\end{align}
For $u \geq 0$, using the second line we easily see in a similar manner as above that,
\beq
 |\Cov ( X \1_{ \{ X \geq u \} } , X )|   + | \pp[ X \geq u ] \Var (X)| \leq C (1 + | H_\eta (q) | )^2 \e^{ - \eta u}.
\eeq
Similarly, using the first line we have for $u \leq 0$ that,
\beq
 | \Cov ( X \1_{ \{ X \leq u \} } , X ) |  +  | \pp[ X \leq u ] \Var (X)  | \leq C (1 + | H_\eta (q) | )^2 \e^{ - V(u) + \eta u }
\eeq
We have already estimated the $ \ee[X ] \Cov ( \1_{ \{ X \leq u \} } , X ) $ term previously. This completes the proof. \qed

\bep \label{prop:1b}
Let $\eta_2 > \eta_1 >0$. Let $q$ be uniform on $(0, 1)$. Define $I:=[ \eta_1 ,  \eta_2]$. For any $ p  \geq 1$  and $k=0, 1, 2$ we have,
\beq \label{eqn:1b-1}
\ee[ \sup_{ \eta \in I } | \del^k_\eta H_\eta (q) |^p ] \leq C_p
\eeq
Define now the difference quotients,
\beq
\Deli_h ( \eta ) := \frac{ H_{\eta + h } ( q ) - H_\eta (q) }{h}
\eeq
and
\beq
\Delii_h (\eta) :=  \frac{ \del_\eta H_{\eta + h } ( q ) - \del_\eta H_\eta (q) }{h} .
\eeq
Then, 
\beq
\ee[ \sup_{ \eta \in I , |h| < \eta_1 /2} | \Deli_h ( \eta ) |^p ] \leq C
\eeq
and
\beq
\ee[ \sup_{ \eta \in I , |h| < \eta_1/2} | \Delii_h ( \eta ) |^p ] \leq C
\eeq
\eep
\proof Since $\eta \mapsto H_\eta$ and $V(x)$ are monotonic functions we see first that,
\beq
\sup_{ \eta \in I } |  H_\eta (q) | +\sup_{ \eta \in I } | V( H_\eta (q)) |  \leq |  H_{\eta_1} (q) | + | V( H_{\eta_1} (q)) |  + |  H_{\eta_2} (q) | + | V( H_{\eta_2} (q)) | 
\eeq
The quantities on the RHS have finite moments of all orders as $H_\eta (q)$ is distributed as $\nu_\eta$. We conclude the estimates \eqref{eqn:1b-1} from Lemma \ref{lem:1b-1}. The estimates for the difference quotients follow from estimating them by the supremum of $| \del_\eta H_\eta| $ and $| \del_\eta^2 H_\eta| $ over the intervals $[\eta_1/2 , 2 \eta_2 ]$. \qed

\bep \label{prop:dif-under}
For $\del = \del_\eta , \del_\theta , \del_\eta^2, \del_\theta^2 ,\del_\eta \del_\theta$ we have,
\beq
\del \ee[ W_{n, t} ( \eta , \theta ) ] = \ee[ \del W_{n, t} (  \eta , \theta ) ]
\eeq
\eep
\proof We claim that this follows from the dominated convergence theorem. First, note that the difference quotients all converge pointwise to the appropriate derivatives of $W_{N, t} ( \eta , \theta)$ by the differentiability established in Appendix \ref{a:diff}. All that is required is an estimate for the supremum of the difference quotients over $h$.  For these, we see that any difference quotient not involving the initial data parameter $\eta$ is bounded by a constant (that may depend on $t$ and $N$ of course) independent of $h$, by the estimates proven in Appendix \ref{a:diff}, i.e., Propositions \ref{prop:1d-finite-dif-d} and \ref{prop:2-dd}. 

For any difference quotient involving the initial data parameter $\eta$ we see by the remaining propositions of Appendix \ref{a:diff} that they are all bounded in terms of the difference quotients or deriviatives of the initial data. By Proposition \ref{prop:1b} the supremum over $h$ and $\eta$ of such quantities have finite moments of all orders, and so we can apply dominated convergence to pass the limit $h \to 0$ inside the integrand, yielding the claim. \qed

\bep \label{prop:Wnt-bad-bd}
We have,
\beq
\ee[ |W_{N, t} ( \eta , \theta ) |^2] \leq CN^2(1+t)^2
\eeq
\eep
\proof We have,
\beq
\ee[ |u_j ( t , \eta, \theta ) |^2] \leq C t^2 + \ee[ |u_j (t , \eta , \eta ) |^2 ] \leq C (t^2+1).
\eeq
This is sufficient to prove the required estimate via Cauchy-Schwarz. \qed

\section{Nonpositivity of $\psiV_2 (\theta)$} \label{a:psiV2}

\bel \label{lem:psiV2}
Let $V$ be a potential of O'Connell-Yor type. Then for all $\theta >0$ we have,
\beq
\psiV_2 ( \theta ) \leq 0.
\eeq
\eel
\proof Consider the solution $v(t, \theta)$ of
\beq \label{eqn:erg-2}
\d v( t) = - V' ( v ( t , \theta ) ) \d t - \theta \d t + \d B_0 (t) + \d B_1 (t) ,
\eeq
with initial data $v(0 , \theta ) = 0$. It follows from the calculations of Appendix \ref{a:2-der} that for all $t$, the function $ \theta \to v( t , \theta)$ is convex in $\theta$. Therefore, for all $h >0$ sufficiently small we have,
\beq
v( t , \theta + h ) + v ( t , \theta - h) - 2 v ( t , \theta ) \geq 0.
\eeq
The claim will then follow by proving that for all $\theta >0$,
\beq \label{eqn:erg-1}
\lim_{t \to \infty} \ee[ v (t , \theta ) ] = \int x \d \nu_\theta (x) ,
\eeq
as the previous two results imply that $2 \psiV_0 ( \theta) - \psiV_0 ( \theta - h) - \psiV_0 ( \theta + h) \geq 0$ for all $\theta $ and $h$ sufficiently small.  

So we turn to proving the convergence \eqref{eqn:erg-1}. Along the way we will also see that the expectations on the LHS are well-defined. Let now $u(t) = u(t, \theta)$ be the solution to \eqref{eqn:erg-2} but with initial data distributed according to the invariant measure $\nu_\theta$ and let $w(t) := u(t) - v(t)$. Then,
\beq
\del_t w(t) = \left( - \int_0^1 V'' ( s u (t) + (1-s) v(t) ) \d s \right) w(t) =: - F(t) w(t).
\eeq
Since $F(t) \geq 0$ we see that $|w(t) | \leq |w(0)|= |u(0)|$ for all $t$ (showing that the expectations in \eqref{eqn:erg-1} are indeed well-defined)  and moreover that $w(t)$ has the same sign as $w(0)$.  The assumption that $V(x) \geq 0$ and the monotonicity of $V'(x)$ implies that $\lim_{x \to \infty} V'(x) = 0$. Therefore, integrating the first inequality of \eqref{eqn:deriv-assump} gives that
\beq
V''(x) \geq - c_0 V'(x) \geq - \eps_0 V' (x)
\eeq
for all $x \in \rr$ and all $c_0 \geq \eps_0 > 0$. It follows that,
\beq
\int_0^1 V'' ( s u (t) + (1-s) v(t) ) \d s \geq \eps_0 \left( - \int_0^1 V' ( s u (t) + (1-s) v(t) ) \d s \right)
\eeq
Since $w(t)$ has the same sign as $w(0)$ it follows that either $v(t) > u(t)$ for all $t$ or $v(t) < u(t)$ for all $t$.  In the first case, using that $V'(x)$ is monotonic, we have,
\begin{align}
\int_0^t - \int_0^1 V' ( r u (s) + (1-r) v(s) ) \d r \d s &\geq \int_0^t - V' ( v(s) ) \d s \notag\\
&= \theta t + v(t) - v(0) -B_0 (t) - B_1 (t) \notag\\
&\geq \theta t - |u(t)| - |u(0)| - |B_0 (t)| - |B_1 (t) |.
\end{align}
In the case $v(t) < u(t)$ we arrive at a similar estimate using the same method.  Let,
\beq
\F_t := \{ |B_0 (t) | + |B_1 (t) | \leq (1 + t^{3/4} ) \}
\eeq
so that $\pp[ \F_t^c] \leq C \e^{- t^{1/10}}$.

It follows that,
\begin{align}
\ee[ |w(t) | ] &\leq \ee[ \exp \left(  - \int_0^t F(s) \d s \right) |w(0) | \1_{\F_t}  ] + \ee[ |w(0)| \1_{\F_t^c} ] \notag\\
&\leq  C \e^{ - \eps_0 \theta t + t^{3/4} } \ee[ \e^{ \eps_0 ( |u(t) | + |u(0)| ) | } |u(0)| ] + C\e^{-t^{1/10}/2} \ee[ |u(0)|^2]^{1/2} \notag\\
&\leq C\e^{ -c t^{1/10} }
\end{align}
as long as $\eps_0 < \theta/100$ so that $\ee[ \e^{ 10 \eps_0 | u(0) | } ] < \infty$. This completes the proof. \qed



\begin{thebibliography}{9}
\bibitem{A} A. Aggarwal. \emph{Universality for lozenge tiling local statistics.} arXiv preprint arXiv:1907.09991 (2019).
\bibitem{BCS}  M. Bal\'azs, E. Cator, T. Sepp\"al\"ainen, \emph{Cube root fluctuations for the corner growth model associated to the exclusion process}, Electronic Journal of Probability \textbf{11}, paper no. 42, 2006. p. 1094-1132
\bibitem{BS} M. Bal\'azs, T. Sepp\"al\"ainen, \emph{Order of current variance and diffusivity in the asymmetric simple exclusion process}, Annals of Mathematics, \textbf{171}, no. 2, 2010.
\bibitem{BKS1} M. Bal\'azs, J. Komj\'athy, T. Sepp\"al\"ainen, \emph{Fluctuation Bounds in the Exponential Bricklayers Process}. Journal of Statistical Physics, \textbf{147}, 2012.
\bibitem{BKS2} M. Bal\'azs, J. Komj\'athy, T. Sepp\"al\"ainen, \emph{Microscopic concavity and fluctuation bounds in a class of deposition processes}. Annales de l'IHP Probabilit\'es et Statistiques, \textbf{48}, no. 1, 2012.
\bibitem{BSQ} M. Bal\'azs, T. Sepp\"al\"ainen, J. Quastel, \emph{Fluctuation exponent of the KPZ/stochastic Burgers equation}. Journal of the American Mathematical Society \textbf{24}, 2011.
\bibitem{Bar} Y. Baryshnikov, \emph{GUEs and Queues}. Probability Theory and Related Fields, \textbf{119}, 2020.
\bibitem{BCF} A. Borodin, I. Corwin, P. Ferrari, \emph{Free energy fluctuations for directed polymers in random media in 1+ 1 dimension}. Communications on Pure and Applied Mathematics, \textbf{67}, no. 7, 2014.
\bibitem{changyau} C.C. Chang, H.T. Yau, \emph{Fluctuations of one-dimensional Ginzburg-Landau models in nonequilibrium}, Communications in Mathematical Physics, 145, 2, 1992.
\bibitem{DOV} D. Dauvergne, J. Ortmann, B. Virag, \emph{The Directed Landscape}, arXiv preprint. arXiv:1812.00309, 2018.
\bibitem{DGP} J. Diehl, M. Gubinelli, and N. Perkowski, \emph{The Kardar–Parisi–Zhang equation as scaling limit of weakly asymmetric interacting Brownian motions.} Communications in Mathematical Physics, 2017.
\bibitem{MDE} L. Erd\H{o}s. \emph{The matrix Dyson equation and its applications for random matrices.} PCMI lecture notes, 2019.
\bibitem{EY} L. Erd\H{o}s, H.-T. Yau. \emph{A dynamical approach to random matrix theory}. Vol. 28. American Mathematical Soc., 2017.
\bibitem{EJS} E. Emrah, C. Janjigian, T. Seppalainen. \emph{Optimal-order exit point bounds in exponential last-passage percolation via the coupling technique}. \url{https://arxiv.org/abs/2105.09402}, 2021.
\bibitem{FSW1} P. Ferrari, H. Spohn, R. Weiss, \emph{Scaling limit for Brownian Motion with One-Sided Collisions}, Annals of Applied Probability \textbf{25}, no. 3, 2015.
\bibitem{FSW2} P. Ferrari, H. Spohn, R. Weiss, \emph{Brownian motions with one-sided collisions: the stationary case}, Electronic Journal of Probability, \textbf{20}, no. 69, 2015.
\bibitem{FSW-book} P. Ferrari, H. Spohn, R. Weiss, \emph{Reflected Brownian Motions in the KPZ Universality Class}, SpringerBriefs in Mathematical Physics, Volume 18, 2017.
\bibitem{harris}  T.E. Harris,  \emph{Diffusion with “collisions” between particles}. Journal of  Applied Probability \textbf{2}, 1965.
\bibitem{sasamoto} T. Imamura, T. Sasamoto, \emph{Free energy distribution of the stationary O'Connell-Yor directed random polymer}. Journal of Physics A, 50.28, 2017. 
\bibitem{jaramoreno} M. Jara,  G. R. M. Flores, \emph{Stationary directed polymers and energy solutions of the Burgers equation}. Stochastic Processes and their Applications, 2020.
\bibitem{krylov} N.V. Krylov, \emph{Lectures on elliptic and parabolic equations in
Hoelder spaces}, Graduate Studies in Mathematics, Vol. 12. American Mathematical Society, Providence, RI, 1996.
\bibitem{CN1} C. Noack, P. Sosoe, \emph{Central moments of the O'Connell-Yor polymer}. Annals of Applied Probability.
\bibitem{LY1} B. Landon, H.-T. Yau, \emph{Convergence of local statistics of Dyson Brownian motion.} Communications in Mathematical Physics 355.3 (2017): 949-1000.
\bibitem{LY2} B. Landon, H.-T. Yau, \emph{Edge statistics of Dyson Brownian motion.} preprint.
\bibitem{LSY} B. Landon, P. Sosoe, H.-T. Yau. \emph{Fixed energy universality of Dyson Brownian motion.} Advances in Mathematics 346 (2019): 1137-1332.
\bibitem{malliavin} D. Nualart, \emph{Malliavin Calculus and Applications}, Springer.
\bibitem{MQR} K. Matetski, J. Quastel, D. Remenik, \emph{The KPZ Fixed Point}. To appear in Acta Mathematica, 2020+.
\bibitem{NQR} M. Nica, J. Quastel, D. Remenik, \emph{One-sided reflected Brownian motions and the KPZ fixed point}. To appear in Forum of Mathematics, Sigma.
\bibitem{NQR2} M. Nica, J. Quastel, D. Remenik, \emph{Solution of the Kolmogorov equation for TASEP}. To appear in Annals of Proabability.
\bibitem{OY} N. O'Connell, M. Yor, \emph{Brownian Analogues of Burke's Theorem}. Stochastic Processes and Applications, \textbf{96}, 2, 2001.
\bibitem{O-toda} N. O'Connell, \emph{Directed Polymers and the Quantum Toda Lattice}, Annals of Probability \textbf{40}, no. 2,  2012.
\bibitem{OM} J. Moriarty, N. O'Connell, \emph{On the free energy of a directed polymer in a Browian environment}. Markov Processes and Related Fields, \textbf{13}, no. 2, 2007.
\bibitem{MFSV} G. Moreno-Flores, T. Sepp\"al\"ainen, B. Valk\'o, \emph{Fluctuation exponents for directed polymers in the intermediate disorder regime}. Electronic Journal of Probability \textbf{19}, no. 89, 2014.
\bibitem{SS} T. Sasamoto, H. Spohn, \emph{Point-interaction Brownian motions in the KPZ universality class}, Electronic Journal of Probability, \textbf{20} no. 87, 2015
\bibitem{rains} Rains, E. \emph{A mean identity for longest increasing subsequences}, 2000. \url{https://arxiv.org/abs/math/0004082}.
\bibitem{S1} T. Sepp\"al\"ainen, \emph{Scaling for a one-dimensional directed polymer with boundary conditions}, Annals of Probability, \textbf{40}, 1, 2012.
\bibitem{S2} T. Sepp\"al\"ainen, \emph{Variational formulas, Busemann functions, and fluctuation exponents for the corner growth model with exponential weights}, Random Growth Models, 133-201, Proc. Sympos. Appl. Math., 75, Amer. Math. Soc., 2018.
\bibitem{SV} T. Sepp\"al\"ainen, B. Valk\'o, \emph{Bounds for scaling exponents for a 1+1 dimensional directed polymer in a Brownian environment}, ALEA \textbf{7}, 2010.
\bibitem{spohn-CMP} H. Spohn, \emph{Equilibrium fluctuations for interacting Brownian particles}, Communications in Mathematical Physics, \textbf{103}, 1986. 
\bibitem{spohn} H. Spohn, \emph{KPZ scaling theory and the semidiscrete directed polymer model.} Random Matrix Theory, Interacting Particle Systems and Integrable Systems 65, MRSI 2012.
\bibitem{stroock2} D.W. Stroock, \emph{Probability: An Analytic View, 1st Edition}, Cambridge University Press, 2004.
\bibitem{SQ} S. Sarkar, J. Quastel, \emph{Convergence of exclusion processes and KPZ equation to the KPZ fixed point}. Preprint arXiv:2008.06584, 2020.
\bibitem{varadhan} S.R.S. Varadhan, \emph{Lectures on Diffusion Problems and Partial Differential Equations}, Tata Institute Lecture Notes, 1989.
\bibitem{V} B. Virag, \emph{The heat and the landscape I}. arXiv preprint arXiv:2008.07241, 2020.
\bibitem{zhu} M. Zhu, \emph{Equilibrium fluctuations for one-dimensional Ginzburg-Landau lattice model}, Nagoya Mathematical Journal \textbf{117}, 1990.
\end{thebibliography}
\end{document}